\newtheorem{introthm}{Theorem}
\newtheorem{introcor}[introthm]{Corollary}
\newtheorem{thm}{Theorem}[section]
\newtheorem{lem}[thm]{Lemma}
\newtheorem{cor}[thm]{Corollary}
\newtheorem{prop}[thm]{Proposition}
\newtheorem{defi}[thm]{Definition}
\newtheorem*{defi*}{Definition}
\theoremstyle{definition}
\theoremstyle{remark}
\newtheorem*{rmk}{Remark}
\numberwithin{equation}{section}
\newcommand*{\sheafhom}{\mathscr{H}\kern -1.5pt om}
\newcommand{\RR}{\mathbb{R}}
\newcommand{\ZZ}{\mathbb{Z}}
\newcommand{\FF}{\mathbb{F}}
\newcommand{\QQ}{\mathbb{Q}}
\newcommand{\GG}{\mathbb{G}}
\newcommand{\CC}{\mathbb{C}}
\DeclareMathOperator{\Spec}{Spec}
\DeclareMathOperator{\Hom}{Hom}
\DeclareMathOperator{\Ext}{Ext}
\DeclareMathOperator{\Func}{Fun}
\DeclareMathOperator{\fib}{fib}
\DeclareMathOperator{\cofib}{cofib}
\DeclareMathOperator{\Gal}{Gal}
\tikzset{%
    symbol/.style={%
        draw=none,
        every to/.append style={%
            edge node={node [sloped, allow upside down, auto=false]{$#1$}}}
    }
} 
\title{Special values of $L$-functions on regular arithmetic schemes of dimension $1$}
\author{Adrien Morin}
\date{}
\begin{document}

\maketitle
\begin{abstract}
    We construct a well-behaved Weil-étale complex for a large class of $\ZZ$-constructible sheaves on a regular irreducible scheme $U$ of finite type over $\ZZ$ and of dimension $1$. We then give a formula for the special value at $s=0$ of the $L$-function associated to any $\ZZ$-constructible sheaf on $U$ in terms of Euler characteristics of Weil-étale cohomology; for smooth proper curves, we obtain the formula of \cite{Geisser20}. We deduce a special value formula for Artin $L$-functions twisted by a singular irreducible scheme $X$ of finite type over $\ZZ$ and of dimension $1$. This generalizes and improves all results in \cite{Tran16}; as a special case, we obtain a special value formula for the arithmetic zeta function of $X$.
\end{abstract}

\tableofcontents

\section{Introduction}

In \cite{Lichtenbaum09}, Lichtenbaum conjectured the existence of a Weil-étale topology for arithmetic schemes and that the Euler characteristic of the corresponding cohomology with compact support, constructed \textit{via} a determinant, should give special values of zeta functions at $s=0$. Lichtenbaum then gave a tentative definition of the cohomology groups in the case of the spectrum $\Spec(\mathcal{O}_K)$ of the ring of integers in a number field and showed that the conjecture holds if the cohomology groups vanish in degree $>3$; unfortunately, they were shown to be non-vanishing in \cite{Flach08}. 

In \cite{Morin14} and \cite{Flach18}, Flach and B. Morin used a new approach by defining a Weil-étale complex in the derived category of abelian groups, as the fiber of a morphism constructed \textit{via} Artin-Verdier duality; they also introduced higher regulator pairings. On the other hand, in \cite{Tran16}, Tran constructed a Weil-étale cohomology with compact support for a certain class of $\ZZ$-constructible sheaves on the spectrum $U=\Spec(\mathcal{O}_{K,S})$ of the ring of $S$-integers in a number field $K$ and used it to prove a formula for the special values of Artin L-functions associated to integral representations of the absolute Galois group of $K$. Other relevant works include Chiu's thesis \cite{Chiu} and Geisser and Suzuki's recent paper \cite{Geisser20}. 

In this paper, we improve and generalize all results of \cite{Tran16} and treat also smooth curves over finite fields as in \cite{Geisser20}, using the approach of Flach and Morin: we construct a Weil-étale complex for a larger class of $\ZZ$-constructible sheaves, and deduce a Euler characteristic defined for an arbitrary $\ZZ$-constructible sheaf with good functoriality properties. This enables us to prove a formula for the special value of the $L$-function of a $\ZZ$-constructible sheaf at $s=0$. We deduce a special value formula for Artin $L$-functions twisted by a singular irreducible scheme $X$ of finite type over $\ZZ$ of dimension $1$; as a special case, we obtain a special value formula for the arithmetic zeta function of $X$.

Let $U$ be a regular irreducible scheme of finite type over $\ZZ$ and of dimension $1$, and denote by $K$ its function field. We recall in section \ref{etale_cohomology_compact_support} the compactly supported cohomology $R\Gamma_c(U,-)$ (resp. $R\hat{\Gamma}_c(U,-)$), defined as in \cite{ADT} by correcting étale cohomology with Galois cohomology (resp. Tate cohomology) at the places in $S$ and we denote $H^i_c(U,-)$ (resp. $\hat{H}^i_c(U,-)$) the corresponding cohomology groups. A $\ZZ$-constructible sheaf $F$ on $U$ will be called big if $\Ext^1_U(F,\GG_m)$ is finite and tiny if $H^1_c(U,F)$ is finite; a morphism $F\to G$ is big-to-tiny if $F$ and $G$ are big, $F$ is big and $G$ is tiny or both $F$ and $G$ are tiny. A short exact sequence $0\to F\to G\to H\to 0$ will be called big-to-tiny if all morphisms are big-to-tiny. We prove in sections \ref{functoriality_weil_etale_complex} and \ref{functoriality_euler_characteristic}:
\begin{introthm}
For every big or tiny sheaf $F$, there exists a Weil-étale complex with compact support $R\Gamma_{W,c}(U,F)$, well-defined up to unique isomorphism. It sits in a distinguished triangle
\[
R\Hom(R\Hom_U(F,\GG_m),\QQ[-3]) \to R\Gamma_c(U,F)\to R\Gamma_{W,c}(U,F) \to
\]
It is a perfect complex, functorial in big-to-tiny morphisms, and it yields a long exact cohomology sequence for big-to-tiny short exact sequences. If $\pi:U'\to U$ is the normalization in a finite Galois extension of $K$ and $j:V\subset U$ is an open immersion, we have canonical isomorphisms $R\Gamma_{W,c}(U,\pi_\ast F)\simeq R\Gamma_{W,c}(U',F)$ and $R\Gamma_{W,c}(U,j_!F)\simeq R\Gamma_{W,c}(V,F)$.
\end{introthm}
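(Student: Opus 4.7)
The plan is to follow the approach of Flach and B.~Morin: I would define $R\Gamma_{W,c}(U,F)$ as the cone of a canonical morphism
\[ \alpha_F \colon R\Hom(R\Hom_U(F, \GG_m), \QQ[-3]) \to R\Gamma_c(U, F), \]
so that the distinguished triangle in the statement becomes tautological. The morphism $\alpha_F$ should come from Artin--Verdier duality on the $1$-dimensional regular scheme $U$: applying $R\Hom(R\Hom_U(F,\GG_m)[3], -)$ to the short exact sequence $0\to\ZZ\to\QQ\to\QQ/\ZZ\to 0$ produces a distinguished triangle, and the duality identifies the $\QQ/\ZZ$-term with $R\Gamma_c(U,F)$ (up to the Tate-cohomology correction at archimedean places which motivates the use of $R\hat\Gamma_c$ recalled in section \ref{etale_cohomology_compact_support}).

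I would then check perfectness of the cone by tracing finiteness through this triangle. The source of $\alpha_F$ is a bounded complex of $\QQ$-vector spaces whose cohomology is the $\QQ$-dual of $\Ext^\bullet_U(F,\GG_m)$; the big condition makes these groups finitely generated over $\ZZ$, while the tiny condition yields the same conclusion via Artin--Verdier duality. The target $R\Gamma_c(U,F)$ has finitely generated cohomology groups because $F$ is $\ZZ$-constructible, the otherwise problematic $H^1_c$ being controlled under either hypothesis. The long exact sequence of the triangle then shows that the cone is bounded with finitely generated cohomology, and perfectness follows after inspecting torsion-free quotients.

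The technical heart of the argument is the uniqueness of $R\Gamma_{W,c}(U,F)$ and its functoriality on big-to-tiny morphisms. Both reduce to Hom-vanishing statements in $D(\ZZ)$: for $f\colon F\to G$ big-to-tiny, one must show that the square defining the comparison of the triangles of $F$ and $G$ automatically commutes (obstruction in one Hom group) and that the induced map on cones is then the unique such lift (vanishing of a second Hom group). Since the sources of the $\alpha$'s are $\QQ$-linear, these Hom groups factor through the rationalization of $R\Gamma_c(U,G)$, and the explicit computation shows that the asymmetric big-to-tiny condition is exactly the minimal hypothesis that forces both vanishings. The long exact cohomology sequence for a big-to-tiny short exact sequence $0\to F\to G\to H\to 0$ is then deduced from the octahedral axiom applied to the natural triangles comparing the three $\alpha$'s.

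Finally, I would deduce the compatibilities from formal considerations. For $\pi\colon U'\to U$ the normalization in a finite Galois extension (hence finite étale), one has $R\Gamma_c(U,\pi_\ast F)\simeq R\Gamma_c(U',F)$ and, using $\pi^\ast \GG_m \simeq \GG_m$, also $R\Hom_U(\pi_\ast F,\GG_m)\simeq R\Hom_{U'}(F,\GG_m)$; the morphism $\alpha_{\pi_\ast F}$ is thus identified with $\alpha_F$, and the cones agree. The argument for $j\colon V\hookrightarrow U$ is parallel, via the defining identity $R\Gamma_c(U,j_!F)\simeq R\Gamma_c(V,F)$ and the $j_!$-$j^\ast$ adjunction on the $R\Hom$ side. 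The main obstacle is the third paragraph: isolating big-to-tiny as the precise subcategory on which the cone construction is functorial and exact rests on a careful and somewhat asymmetric computation of the relevant Hom groups, and anything weaker than this class fails to satisfy both vanishings simultaneously.
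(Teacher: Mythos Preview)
Your overall plan matches the paper's approach: define the Weil-étale complex as the cone of a map $\alpha_F$ (the paper's $\beta_F$) lifted from Artin--Verdier duality, and establish uniqueness and functoriality via Hom-vanishing computations in $D(\ZZ)$ using the Verdier exact sequence. However, two of your later steps contain genuine gaps.

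\textbf{The long exact sequence.} You propose to deduce it from the octahedral axiom. This does not work: given a big-to-tiny short exact sequence $0\to F\to G\to H\to 0$, you obtain a commutative diagram whose first two columns (the $D$'s and the $R\Gamma_c$'s) are distinguished, but the third column $R\Gamma_{W,c}(U,F)\to R\Gamma_{W,c}(U,G)\to R\Gamma_{W,c}(U,H)\to$ is \emph{not} a priori a distinguished triangle. The octahedral axiom handles cones of composable morphisms, not cones across a morphism of triangles; the $3\times 3$ lemma in a triangulated category only produces \emph{some} completion, not the one you have already constructed. The paper acknowledges this triangle is non-distinguished and proves exactness of the associated long sequence by an indirect route: tensoring the defining triangle with $\ZZ/n\ZZ$ kills $D_F$ (it is $\QQ$-linear), hence $R\Gamma_c(U,-)\otimes^L\ZZ/n\ZZ\simeq R\Gamma_{W,c}(U,-)\otimes^L\ZZ/n\ZZ$, and taking derived $p$-completions gives $R\Gamma_{W,c}(U,-)\otimes\ZZ_p\simeq R\Gamma_c(U,-)^\wedge_p$, which \emph{is} exact. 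Exactness over $\ZZ$ then follows by checking it after $-\otimes\ZZ_p$ for every prime $p$.

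\textbf{The compatibility with $\pi_\ast$.} You assert that the normalization $\pi\colon U'\to U$ in a finite Galois extension is finite étale and deduce $\pi^\ast\GG_m\simeq\GG_m$. This is false: $\pi$ is finite but ramified in general, and $\pi^\ast\GG_m=\GG_m$ holds on any scheme but does not by itself give the required identification of $R\Hom$'s (the relevant adjunction for $\pi_\ast$ is with $R\pi^!$, not $\pi^\ast$). The correct argument, used in the paper, goes through the \emph{norm} map $Nm\colon\pi_\ast\GG_m\to\GG_m$: the composite
\[
R\Hom_{U'}(F,\GG_m)\xrightarrow{\pi_\ast}R\Hom_U(\pi_\ast F,\pi_\ast\GG_m)\xrightarrow{Nm_\ast}R\Hom_U(\pi_\ast F,\GG_m)
\]
is an isomorphism (\cite[II.3.9]{ADT}), and one then checks compatibility of the norm with both the Artin--Verdier pairing and the regulator pairing to conclude that $\beta_{\pi_\ast F}$ and $\beta_F$ agree under these identifications. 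Your argument for $j_!$ is correct.
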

This distinguished triangle was inspired by \cite{Morin14} : for $U$ proper and $F=\ZZ$ this is exactly the distinguished triangle in B. Morin's article. Unfortunately, the defects of the derived ($1$-)category prevented us from obtaining a well-defined complex for every $\ZZ$-constructible sheaf, and we only got a long exact cohomology sequence instead of a distinguished triangle. Next, following \cite{Flach18}, we construct in section \ref{regulator_pairing} a regulator pairing for a complex $F$ in the derived category of étale sheaves $ D(U_{et})$ that generalizes the logarithmic embedding of the Dirichlet $S$-units theorem, and we show that it is perfect after base change to $\RR$:
\begin{introthm}
There is a regulator pairing
\[
R\Gamma_c(U,F)\otimes R\Hom_U(F,\GG_m)\to\RR[-1],
\]
functorial in $F$. The induced map after base change to $\RR$
\[
R\Hom_U(F,\GG_m)_\RR \to R\Hom(R\Gamma_c(U,F),\RR[-1])
\]
is a natural isomorphism for $F\in D(U_{et})$.
\end{introthm}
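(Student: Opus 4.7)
The plan is to build a canonical trace map
\[
\tau\colon R\Gamma_c(U,\GG_m)\to\RR[-1],
\]
and define the regulator as the composite of Yoneda cup product with $\tau$:
\[
R\Gamma_c(U,F)\otimes R\Hom_U(F,\GG_m) \xrightarrow{\ \cup\ } R\Gamma_c(U,\GG_m) \xrightarrow{\ \tau\ } \RR[-1].
\]
In the number field case, $\tau$ is obtained from the archimedean logarithms $\log|\cdot|_v$, $v\in S_\infty$: the Tate correction built into $R\Gamma_c$ (from section \ref{etale_cohomology_compact_support}) provides a canonical map of $R\Gamma_c(U,\GG_m)$ into a bounded complex of $\RR$-vector spaces concentrated at the archimedean places, and $\tau$ is obtained by summing coordinates. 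In the function field case one replaces this by the degree map $\mathrm{Pic}(U)\to\ZZ\hookrightarrow\RR$ in the relevant degree. Functoriality of the pairing in $F$ then follows from functoriality of the cup product alone.

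\textbf{Dévissage.} Write $\phi_F$ for the adjoint map. Both source and target are exact triangulated functors of $F$, so the full subcategory of $F\in D(U_{et})$ on which $\phi_F$ is an isomorphism is triangulated and closed under retracts. When the cohomology sheaves of $F$ are torsion, $R\Gamma_c(U,F)$ is bounded with finite cohomology so its image in $D(\RR)$ vanishes, and $R\Hom_U(F,\GG_m)$ is torsion by Artin--Verdier duality; hence $\phi_F$ is automatically an isomorphism. It remains to check the assertion up to torsion, which by the standard filtration of $\ZZ$-constructible sheaves together with the identities $R\Gamma_c(U,j_!H)\simeq R\Gamma_c(V,H)$ and $R\Gamma_c(U,\pi_*H)\simeq R\Gamma_c(U',H)$ (and their counterparts for $R\Hom(-,\GG_m)$) reduces to the two generator cases: $F=\ZZ_U$ on an arbitrary regular irreducible $U$ as in the statement, and $F=i_*\ZZ$ at a closed point.

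\textbf{Verification on generators.} For $F=i_*\ZZ$ the computation is elementary, as both sides unwind to cohomology of the residue field. For $F=\ZZ_U$, the pairing unfolds, after taking cohomology in each degree, into two classical statements: first, the Dirichlet regulator realizes $\mathcal{O}_{K,S}^\times\otimes\RR$ as the hyperplane $\sum x_v=0$ in $\RR^{S_\infty}$ and pairs perfectly with $H^1_c(U,\ZZ)_\RR$; second, $\mathrm{Pic}(U)_\RR$ vanishes by finiteness of the class group (respectively of $\mathrm{Pic}^0$ in the function field case), matching the vanishing of $H^2_c(U,\ZZ)_\RR$ on the other side. The function field case is entirely analogous, with the degree map replacing the archimedean logarithms.

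\textbf{Main obstacle.} The hard part is not the generator cases---they are classical---but the construction of $\tau$ at the level of the derived category so that it is naturally compatible with the normalization pushforward $\pi_*$ and with $j_!$. Only with this compatibility does the dévissage in step two legitimately propagate the isomorphism from the generators to an arbitrary $F$. Achieving this requires engineering the trace map together with the Tate-cohomology correction built into $R\Gamma_c$, rather than after taking cohomology; the defects of the derived 1-category make it delicate to keep this natural in both flavors of functoriality simultaneously.
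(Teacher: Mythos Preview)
Your overall strategy---build a trace map $\tau:R\Gamma_c(U,\GG_m)\to\RR[-1]$, define the regulator as Yoneda cup product followed by $\tau$, then dévissage to the generators $\ZZ_U$ and $i_\ast\ZZ$---is exactly the paper's approach. But your construction of $\tau$ is not correct, and this propagates into the verification step.

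\textbf{The trace map.} You describe $\tau$ as coming from ``the Tate correction built into $R\Gamma_c$'' giving a map to a complex concentrated at the archimedean places, and summing the archimedean logarithms. This is not how $\tau$ is built. The comparison complex $T$ of section~\ref{etale_cohomology_compact_support} is the \emph{cofiber} of $R\Gamma_c\to R\hat\Gamma_c$, not a target of $R\Gamma_c$; and there is no natural map from $R\Gamma_c(U,\GG_m)$ to anything concentrated at the archimedean places. The paper instead computes $H^1_c(U,\GG_m)=C'_{K,S}$ (the henselian $S$-idele class group) and shows $H^0_c(U,\GG_m)_\RR=0$ and $H^i_c(U,\GG_m)$ torsion for $i\ge 2$, so that
\[
R\Gamma_c(U,\GG_m)\longrightarrow \tau^{\ge 1}R\Gamma_c(U,\GG_m)_\RR=(C'_{K,S})_\RR[-1]\xrightarrow{\ \sum_v\log|\cdot|_v\ }\RR[-1]
\]
is well-defined. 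The sum runs over \emph{all} places, not only $S_\infty$. This uniform description also handles the function field case (where the rational factorisation through $\QQ[-1]$ exists).

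\textbf{Why the distinction matters.} Your verification for $F=\ZZ_U$ says the Dirichlet regulator embeds $\mathcal{O}_{K,S}^\times\otimes\RR$ as a hyperplane in $\RR^{S_\infty}$. But $\mathrm{rank}_\ZZ\mathcal{O}_{K,S}^\times=|S|-1$, so if $S\supsetneq S_\infty$ the dimensions do not match and the pairing you describe cannot be perfect. The correct target is $(\prod_{v\in S}\RR)/\RR\simeq H^1_c(U,\ZZ)_\RR$, and the regulator uses the logarithms at all $v\in S$; this is exactly the $S$-unit theorem.

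\textbf{The compatibility you flag as the obstacle.} You are right that the dévissage requires checking that $\tau$ is compatible with $\pi_\ast$ and $j_!$, but this is not a derived-category subtlety; once $\tau$ is built via $C'_{K,S}$ it is a concrete computation. For $\pi_\ast$ the paper uses the norm isomorphism $R\Hom_V(F,\GG_m)\simeq R\Hom_U(\pi_\ast F,\GG_m)$ and checks $tr\circ Nm=tr$ via the identity $|N_{L_w/K_v}(\alpha_w)|_v=|\alpha_w|_w$. For $j_!$ it is the obvious compatibility of the idele-class trace under enlarging $S$. These are classical and explicit; your proposal identifies the issue but does not carry it out, and with your $\tau$ the $j_!$ compatibility would actually fail whenever $V$ omits non-archimedean points of $U$.
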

The method we use to prove the theorem was inspired by \cite{Tran16}; it relies on dévissage of $\ZZ$-constructible sheaves \textit{via} Artin induction at the generic point. This method was also used in \cite{Geisser20} to treat the case of curves over a finite field.

We show in section \ref{splitting_weil_etale_complex} that the perfectness of the regulator pairing gives a canonical trivialization of the $\RR$-determinant of the Weil-étale complex
\[
\lambda :\det_\RR(R\Gamma_{W,c}(U,F)\otimes \RR) \xrightarrow{\simeq} \RR
\]
The $\ZZ$-determinant $\det_\ZZ(R\Gamma_{W,c}(U,F)$ embedds in $\det_\RR(R\Gamma_{W,c}(U,F)\otimes \RR)$ and we define:
\begin{defi*}
\begin{itemize}
\item Let $F$ be a big or tiny sheaf. The (Weil-étale) Euler characteristic of $F$
is the positive integer $\chi_U(F)$ such that \[\lambda(\det_\ZZ(R\Gamma_{W,c}(U,F))=\frac 1 {\chi_U(F)}\ZZ\hookrightarrow \RR\]
The rank of $F$ is
\[
E_U(F)=\sum_i (-1)^i\cdot i\cdot \dim_\RR(H^i_{W,c}(U,F)\otimes\RR)
\]
where $H^i_{W,c}(U,F)$ denotes hypercohomology of $R\Gamma_{W,c}(U,F)$.
\item Let $F$ be a $\ZZ$-constructible. There exists a short exact sequence $0 \to F' \to F \to F''\to 0$ with $F'$ big and $F''$ tiny; define
\[
\chi_U(F)=\chi_U(F')\chi_U(F'')
\]
It does not depend on the chosen sequence.
Proceed similarly to define $E_U(F)$.
\end{itemize}
\end{defi*}
For a $\ZZ$-constructible sheaf $F$ on $U$, define the $L$-function associated to $F$ by the usual Euler product with local factors given by the $F_v\otimes \CC$, where $F_v$ is the pullback of $F$ at a closed point $v\in U$.
%
Denote $r_U(F):=\mathrm{ord}_{s=0}L_U(F,s)$ the vanishing order of $L_U(F,s)$ at $s=0$ and 
\[L^\ast_U(F,0)=\lim_{s\to 0}L_U(F,s)s^{-r_U(F)}
\]
the special value of $L_U(F,s)$ at $s=0$.
Our main theorem then is:
\begin{introthm}[Special value formula]\label{thmC}
Let $F$ be a $\ZZ$-constructible sheaf. Then $r_U(F) = E_U(F)$ and
\[
    L^\ast_U(F,0)=(-1)^{\mathrm{rank}_\ZZ F(K)}\chi_U(F)
\]
\end{introthm}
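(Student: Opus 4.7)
The plan is a dévissage argument: both sides of the formula behave compatibly with short exact sequences of $\ZZ$-constructible sheaves and with the functorialities $\pi_\ast$, $j_!$ provided by Theorem A, so Artin induction at the generic point reduces the theorem to a single base case where the special value formula is classical.

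First, I would verify multiplicativity under short exact sequences. The Euler product gives $L_U(F,s)=L_U(F',s)L_U(F'',s)$ for a short exact sequence $0\to F'\to F\to F''\to 0$, so $r_U$ is additive and $L^\ast_U(\cdot,0)$ is multiplicative; the sign $(-1)^{\mathrm{rank}_\ZZ F(K)}$ is multiplicative too because $\mathrm{rank}_\ZZ F(K)=\dim_\QQ(F\otimes\QQ)(K)$ is additive, the functor $(-\otimes\QQ)(K)$ being exact on $\ZZ$-constructible sheaves. On the Weil-étale side, Theorem A's long exact sequence for big-to-tiny short exact sequences makes $E_U$ additive and $\chi_U$ multiplicative; the definition of $\chi_U$ on arbitrary $\ZZ$-constructible sheaves is designed to extend this, and I would also check multiplicativity on arbitrary short exact sequences by a diagram chase using two tiny/big decompositions.

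Second, I would use the functorialities of Theorem A, $R\Gamma_{W,c}(U,\pi_\ast F)\simeq R\Gamma_{W,c}(U',F)$ and $R\Gamma_{W,c}(U,j_!F)\simeq R\Gamma_{W,c}(V,F)$, together with the parallel identities on $L$-functions ($L_U(\pi_\ast F,s)=L_{U'}(F,s)$ by the standard induced $L$-function formula, and $L_U(j_!F,s)=L_V(F,s)$ as $j_!$ simply drops the closed-point factors of $U\setminus V$), to show that both sides of the formula are invariant under $\pi_\ast$ and $j_!$. Artin induction at $\Spec(K)$ then expresses any continuous $\QQ$-representation of $G_K$ as a $\QQ$-combination of characters induced from trivial characters of open subgroups; combined with the dévissage of $\ZZ$-constructible sheaves used in the proof of Theorem B, this reduces the statement to the base case $F=\ZZ$ on an open subscheme $V$ of the normalization $U'$ in a finite Galois extension of $K$.

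For this base case, $L_V(\ZZ,s)$ is the Dedekind zeta function $\zeta_{\mathcal O_{K',S}}(s)$ in the number field case, or the zeta function of an open smooth curve over a finite field in the function field case. The Euler characteristic $\chi_V(\ZZ)$ unpacks, via the defining triangle for $R\Gamma_{W,c}$ and the explicit regulator pairing of Theorem B, into the classical invariants: a class number, a Dirichlet regulator, the number of roots of unity in $K'$, and the orders of the finite cohomology groups (resp. the analogous invariants in positive characteristic). The analytic class number formula, or its function-field analogue which follows from the Weil conjectures for curves, then yields the desired formula. I expect this base case to be the main obstacle: matching $\chi_V(\ZZ)$ with the classical invariants requires an explicit computation of $R\Gamma_{W,c}(V,\ZZ)$ as a perfect complex of abelian groups, an identification of the trivialization $\lambda$ with the classical regulator determinant, and careful sign tracking---in particular for $(-1)^{\mathrm{rank}_\ZZ\ZZ(K')}=-1$ in the number-field case.
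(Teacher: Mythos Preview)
Your overall strategy matches the paper's: dévissage plus Artin induction, with the identity of $L$-functions and Weil-étale Euler characteristics under $\pi_\ast$, $j_!$, and short exact sequences doing the reduction work. But your reduction stops too early. You claim that Artin induction at $\Spec(K)$ together with the dévissage of Theorem~B reduces everything to the single base case $F=\ZZ$ on a regular open $V$. It does not: two further base cases appear and each needs its own argument.

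First, the integral form of Artin induction used in the paper (Swan's version) produces a short exact sequence
\[
0 \to (F_\eta)^n \oplus \bigoplus \mathrm{ind}\,\ZZ \to \bigoplus \mathrm{ind}\,\ZZ \to N \to 0
\]
with $N$ a \emph{finite} $G_K$-module; after spreading out this becomes a constructible sheaf $G$ on an open $V$. Likewise the torsion subsheaf $F_{tor}$ is constructible. So you must verify separately that for a constructible sheaf $F$ one has $\chi_U(F)=1$ and $E_U(F)=0$ (its $L$-function being identically~$1$). This is Proposition~\ref{euler_constructible} in the paper and is not a formality: in the number field case it uses Milne's Euler-characteristic formula \cite[II.2.13]{ADT} and a comparison between $R\Gamma_c$ and $R\hat{\Gamma}_c$ via the archimedean homology complex; in the function field case it uses the Hochschild--Serre spectral sequence over $\bar{\FF}_p$. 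Your $\QQ$-version of Artin induction hides exactly this torsion obstruction.

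Second, passing from $F$ to $F_{|V}$ via $0\to j_! F_{|V}\to F \to i_\ast i^\ast F\to 0$ produces a sheaf supported on the closed complement $Z$, i.e.\ a finite direct sum of $i_{v,\ast} M$ with $M$ a finite-type $G_v$-module. These are \emph{not} of the form $j_!\ZZ$ or $\pi_\ast\ZZ$ on a regular curve, so your single base case does not cover them. The paper handles this by a second Artin induction, now over $G_v\simeq\hat{\ZZ}$, reducing to $M=\ZZ$ with trivial action, and then checks directly that $L_v(\ZZ,s)=(1-q^{-s})^{-1}$ has special value $1/\log q$ at $s=0$ while Corollary~\ref{euler_vs_point} gives $\chi_v(\ZZ)=1/\log q$ as well. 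This closed-point case is where the transcendental factors $\log N(v)$ in the regulator pairing (diagram~\eqref{regulateur_point}) are actually matched against the $L$-function, and it cannot be absorbed into the $F=\ZZ$ case on an open curve.
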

We prove the theorem again by dévissage, once we have shown that the Euler characteristic inherits the functoriality properties of the Weil-étale complex. This theorem shows in particular that the Weil-étale Euler characteristic of a constructible sheaf is $1$.

Let $K$ be a global field. As a corollary, we obtain in section \ref{twisted_L_functions} a special value formula at $s=0$ for "twisted" Artin $L$-functions: 
\begin{defi*}
	Let $X$ be an irreducible scheme of finite type over $\ZZ$, of dimension $1$ and with residue field $K$ at the generic point, and let $M$ be a discrete $G_K$-module of finite type\footnote{as an abelian group}. The Artin L-function of $M$ (twisted by $X$) is defined as 
	\[
	L_{K,X}(M,s):=\prod_{x\in X_{0}} \det(\mathrm{Id}-N(x)^{-s}\mathrm{Frob}_x|(g_\ast M)_x \otimes \CC)^{-1}
	\]
	where $g:\Spec(K)\to X$ is the canonical morphism, $X_{0}$ is the set of closed point, $N(x)=[\kappa(x)]$ is the cardinality of the residue field at $x$, $(-)_x$ denotes pullback to $x$ and $\mathrm{Frob}_x \in \mathrm{Gal}(\kappa(x)^{sep}/\kappa(x))$ is the (arithmetic) Frobenius at $x$. Define $r_{K,X}(M)$ the order at $s=0$ and $L_{K,X}^\ast(M,0)$ the special value at $s=0$.
\end{defi*}
Theorem $C$ immediately implies:
\begin{introcor}
Let $f : X \to T$ denote either a quasi-finite morphism $X\to T:=\mathbb{P}^1_{\FF_p}$ (in the function field case), which exists by the projective Noether normalization lemma, or the quasi-finite structural morphism $X\to T:=\Spec(\ZZ)$ (in the number field case); then $f_!$ is well-defined. We have 
$r_{K,X}(M)=E_T(f_! g_\ast M)$ and
\[
 L_{K,X}^\ast(M,0)=(-1)^{\mathrm{rank}_\ZZ M}\chi_T(f_!g_\ast M)
\] 
 \end{introcor}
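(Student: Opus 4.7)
The plan is to apply Theorem C to the $\ZZ$-constructible sheaf $F := f_!g_\ast M$ on $T$; the corollary will then reduce to identifying the relevant invariants on both sides.

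First, I would check that $F$ is $\ZZ$-constructible: the sheaf $g_\ast M$ is $\ZZ$-constructible on $X$ since $M$ is finitely generated as an abelian group, and by Zariski's main theorem $f$ factors as $\bar f\circ j$ with $j\colon X\hookrightarrow \bar X$ an open immersion and $\bar f\colon \bar X\to T$ finite, so $f_! = \bar f_\ast j_!$ preserves $\ZZ$-constructibility.

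Next, I would establish $L_T(F,s)=L_{K,X}(M,s)$. Using the stalk formulas for finite pushforward and for extension by zero, at a closed geometric point $\bar t$ of $T$ we have
\[
F_{\bar t} \simeq \bigoplus_{\bar x\in f^{-1}(\bar t)} (g_\ast M)_{\bar x}
\]
as modules over $\mathrm{Gal}(\overline{\kappa(t)}/\kappa(t))$, where $\mathrm{Frob}_t$ acts on the summand indexed by $x$ as $\mathrm{Frob}_x^{[\kappa(x):\kappa(t)]}$ and $N(x)=N(t)^{[\kappa(x):\kappa(t)]}$. The classical characteristic-polynomial identity for induced representations then gives
\[
\det\bigl(1-N(t)^{-s}\mathrm{Frob}_t\mid F_{\bar t}\otimes\CC\bigr) = \prod_{x\in f^{-1}(t)} \det\bigl(1-N(x)^{-s}\mathrm{Frob}_x\mid (g_\ast M)_{\bar x}\otimes\CC\bigr),
\]
and taking the product over $t\in T$ reorganizes the right side as a product over all closed points of $X$, yielding the asserted equality of $L$-functions. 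In particular $r_{K,X}(M)=r_T(F)$ and $L^\ast_{K,X}(M,0)=L^\ast_T(F,0)$.

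Finally, Theorem C applied to $F$ provides $r_T(F)=E_T(f_!g_\ast M)$ together with the special value formula $L^\ast_T(F,0) = (-1)^{\mathrm{rank}_\ZZ F(K_T)}\chi_T(f_!g_\ast M)$, where $K_T$ is the function field of $T$. The same stalk computation at the geometric generic point yields $F_{\bar\eta_T}\simeq \mathrm{Ind}_{G_K}^{G_{K_T}} M$ and hence $F(K_T)\simeq M^{G_K}$, which matches the sign appearing in the corollary. I expect the only nontrivial step to be the Euler factor comparison, which reduces to the standard identity expressing the characteristic polynomial of a Frobenius acting on an induced representation as a product over orbits; the other steps are routine stalk computations and a direct invocation of Theorem C.
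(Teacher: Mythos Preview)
Your approach is essentially the paper's: the paper first proves the analogous statement for an arbitrary $\ZZ$-constructible sheaf $F$ on $X$ (Corollary \ref{special_value_zeta_singular}) by checking $L_T(f_!F,s)=L_X(F,s)$, $R\Gamma_c(T,f_!F)\simeq R\Gamma_c(X,F)$ and $Rf^!\GG_{m,T}\simeq\GG_X$, then invoking Theorem~C; the corollary for $M$ is then the one-line observation that $g_\ast M$ is $\ZZ$-constructible. Your stalk computation for the $L$-function identity and the Zariski's main theorem factorization are exactly the ingredients behind the paper's proof, just made explicit.

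One point deserves care. You correctly compute $F(K_T)\simeq M^{G_K}$ via $(\mathrm{Ind}_{G_K}^{G_{K_T}}M)^{G_{K_T}}=M^{G_K}$, so Theorem~C yields the sign $(-1)^{\mathrm{rank}_\ZZ M^{G_K}}$. The stated corollary, however, has exponent $\mathrm{rank}_\ZZ M$. These ranks need not agree in general (nor do their parities), so your assertion that this ``matches the sign appearing in the corollary'' is not justified as written; either the exponent in the statement should read $\mathrm{rank}_\ZZ M^{G_K}$, or you need an additional argument for why the two parities coincide. The paper's own proof simply specializes Corollary \ref{special_value_zeta_singular}, whose sign is $(-1)^{\mathrm{rank}_\ZZ F(K)}$ with $F=g_\ast M$, i.e.\ again $M^{G_K}$, so the discrepancy is in the statement rather than in your argument.
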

The Weil-étale Euler characteristic is computed explictly and doesn't depend on $f$; this is done through the use of Deninger's complex $\GG_X$ from \cite{Deninger87}, which identifies with (a shift of) the cycle complex $\ZZ^c_X(0)$.

We deduce from this a special value formula for the arithmetic zeta function of $X$:
\begin{introcor}
Suppose $X$ is affine. We have $\mathrm{ord}_{s=0}\zeta_X = \mathrm{rank}_\ZZ(CH_0(X,1))$ and
\[
	\zeta_X^\ast(0) = - \frac{[\mathrm{CH}_0(X)]R_X}{\omega}
\]
\end{introcor}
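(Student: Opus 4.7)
The plan is to apply the previous corollary specialized to the trivial $G_K$-module $M = \ZZ$. First I would check that $\zeta_X(s) = L_{K,X}(\ZZ, s)$: since $g:\Spec(K)\to X$ is the inclusion of the generic point and $\ZZ$ carries the trivial Galois action, the stalk $(g_\ast\ZZ)_x$ at a closed point $x \in X$ identifies with the permutation module $\ZZ^{d_x}$, where $d_x$ is the number of preimages of $x$ in the normalization, and $\mathrm{Frob}_x$ acts by permuting these branches. After tensoring with $\CC$, the local factor $\det(1-N(x)^{-s}\mathrm{Frob}_x)^{-1}$ collapses to $\prod_{y\mapsto x}(1-N(y)^{-s})^{-1}$, which is precisely the local factor at $x$ in $\zeta_X(s)$. (The contribution of the non-normal locus is finite and only affects torsion, which is absorbed in the invariant $\omega$.)

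With this identification, the previous corollary immediately gives
\[
\mathrm{ord}_{s=0}\zeta_X = E_T(f_!g_\ast\ZZ), \qquad \zeta_X^\ast(0) = -\chi_T(f_!g_\ast\ZZ).
\]
It remains to compute the right-hand sides in terms of the Chow-theoretic data $CH_0(X,\bullet)$ and the regulator $R_X$. This is where the Deninger complex $\GG_X$ enters: as mentioned in the text preceding the corollary, $\GG_X$ identifies with a shift of the cycle complex $\ZZ_X^c(0)$, whose étale hypercohomology computes the higher Chow groups $CH_0(X,n)$. Using the functoriality isomorphisms of Theorem A for $\pi_\ast$ (normalization of $X$) and $j_!$ (open immersions inside $T$), together with the explicit triangle defining $R\Gamma_{W,c}(T,-)$ in terms of $R\Gamma_c(T,-)$ and $R\Hom_T(-,\GG_m)$, I would reduce the computation of $R\Gamma_{W,c}(T,f_!g_\ast\ZZ)$ to the cohomology of $\GG_X$.

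Concretely, I would identify $R\Hom_T(f_!g_\ast\ZZ,\GG_m)$, via the adjunction and Deninger's complex, with an object whose cohomology encodes $CH_0(X)$ (finite part) and $CH_0(X,1)$ (free part of rank equal to the number of archimedean and infinite closed fibers relevant to the regulator). Matching the two sides of the triangle degree by degree would show that $H^0_{W,c}(T,f_!g_\ast\ZZ)$ and $H^2_{W,c}(T,f_!g_\ast\ZZ)$ are finite, controlled by $CH_0(X)$ and $\omega$ (the roots-of-unity-type contribution), while $H^1_{W,c}(T,f_!g_\ast\ZZ)\otimes\RR$ has rank equal to $\mathrm{rank}_\ZZ CH_0(X,1)$. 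This yields $E_T(f_!g_\ast\ZZ) = \mathrm{rank}_\ZZ CH_0(X,1)$. For the leading term, the determinant trivialization $\lambda$ coming from the regulator pairing of Theorem B specializes, under this identification, to the classical logarithmic regulator $R_X$ on $CH_0(X,1)$; combining this with the torsion counts gives $\chi_T(f_!g_\ast\ZZ) = [CH_0(X)]R_X/\omega$.

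The main obstacle is the last step: verifying that the abstract regulator pairing $R\Gamma_c(T,-) \otimes R\Hom_T(-,\GG_m) \to \RR[-1]$ of Theorem B, transported along the identification of $R\Hom_T(f_!g_\ast\ZZ,\GG_m)$ with (a shift of) $\GG_X$-cohomology, coincides with the explicit height/logarithmic pairing used to define $R_X$. This is a compatibility statement that must be checked at the level of the dévissage underlying the proof of Theorem B, and requires tracing through the normalization $\pi$ and the generic-point Artin induction. The affine hypothesis is used here to ensure that $CH_0(X,1)$ is finitely generated of the expected rank and that $\zeta_X$ has no pole at $s=0$ coming from the function field/constant field side.
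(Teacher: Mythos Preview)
There is a genuine gap at the very first step. You claim that $\zeta_X(s)=L_{K,X}(\ZZ,s)$, and you compute the stalk $(g_\ast\ZZ)_x\simeq\ZZ^{d_x}$ with Frobenius permuting the branches, so that the local factor is $\prod_{y\mapsto x}(1-N(y)^{-s})^{-1}$. That computation is correct, but it is the local factor of $\zeta_Y$ at the points over $x$, not of $\zeta_X$: the local factor of $\zeta_X$ at $x$ is the single term $(1-N(x)^{-s})^{-1}$. In fact $g$ factors through the normalization $\pi:Y\to X$, and since $Y$ is normal one has $g'_\ast\ZZ=\ZZ$ on $Y$, hence $g_\ast\ZZ=\pi_\ast\ZZ$ on $X$ and $L_{K,X}(\ZZ,s)=L_X(\pi_\ast\ZZ,s)=\zeta_Y(s)$. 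Your parenthetical remark that the discrepancy ``only affects torsion, absorbed in $\omega$'' is not right: the ratio $\zeta_X/\zeta_Y$ is a finite product of nontrivial rational functions in $p^{-s}$ and changes both the order and the special value.

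The paper avoids this by applying the more general special-value formula (the one for an arbitrary $\ZZ$-constructible $F$ on $X$, not just for $g_\ast M$) directly to the constant sheaf $F=\ZZ$, for which $L_X(\ZZ,s)=\zeta_X(s)$ by definition. The computation is then short and concrete: since $R\Hom_X(\ZZ,\GG_X)=R\Gamma(X,\GG_X)$, a direct hypercohomology calculation of the two-term complex $\GG_X$ gives $H^0(X,\GG_X)=\mathrm{CH}_0(X,1)$ with torsion $\mu(K)$ and $H^1(X,\GG_X)=\mathrm{CH}_0(X)$. On the other side, the short exact sequence $0\to\ZZ\to\pi_\ast\ZZ\to Q\to 0$ (with $Q$ supported on the singular locus) together with the affine hypothesis yields $H^0_c(X,\ZZ)=0$ and $H^1_c(X,\ZZ)$ torsion-free. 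Plugging into the explicit Euler-characteristic formula gives the result immediately; there is no need to trace the regulator pairing through dévissage or Artin induction, since $R(\ZZ)=R_X$ just because the degree-$0$ pairing is between trivial spaces.
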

\begin{introcor}
Suppose $X$ is a proper curve, and denote $\FF_q$ the field of constants of $K$. We have $\mathrm{ord}_{s=0}\zeta_X  = \mathrm{rank}_\ZZ(CH_0(X,1))-1$ and
\[
	\zeta_X^\ast(0) = - \frac{[\mathrm{CH}_0(X)_{tor}]R_X}{\omega \log(q)}
\]
\end{introcor}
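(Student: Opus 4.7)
The plan is to reduce to the previous corollary with $M=\ZZ$: since $\zeta_X(s)=L_{K,X}(\ZZ,s)$, it gives
\[
\mathrm{ord}_{s=0}\zeta_X=E_T(f_!g_\ast\ZZ),\qquad \zeta_X^\ast(0)=-\chi_T(f_!g_\ast\ZZ),
\]
and it remains to compute these two invariants explicitly when $X$ is a proper curve over $\FF_q$.

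Using the $f$-independence of $E_T$ and $\chi_T$ noted just after the main corollary, I would transport the computation onto $X$ itself via Deninger's complex $\GG_X$, which is a shift of $\ZZ^c_X(0)$. For a proper curve over $\FF_q$, the cohomology of $\ZZ^c_X(0)$ computes Bloch's higher Chow groups of zero-cycles; these are concentrated in two cohomological degrees, yielding $CH_0(X)=\mathrm{Pic}(X)$ and $CH_0(X,1)$. The key new feature (compared with the affine corollary) is that $\mathrm{Pic}(X)$ is no longer torsion: the degree map contributes an extra rank-one free summand, with torsion $CH_0(X)_{tor}=\mathrm{Pic}^0(X)$. Computing $E_T=\sum_i(-1)^i\cdot i\cdot \dim_\RR(H^i_{W,c}\otimes\RR)$, this new rank-one summand (paired with its dual under the perfect regulator pairing of Theorem B) contributes exactly $-1$ in addition to $\mathrm{rank}_\ZZ CH_0(X,1)$, producing the claimed order formula.

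For the leading coefficient, I would follow the dévissage of the affine corollary, writing the trivialization $\lambda$ as a product of a torsion determinant $[CH_0(X)_{tor}]$ (replacing $[CH_0(X)]$, which was itself torsion in the affine case) and a free-part determinant giving $R_X/\omega$. The extra $\log(q)$ in the denominator arises from the new pairing between the degree class in $\mathrm{Pic}(X)_\RR$ and its dual under the regulator: in the function-field setting, the canonical generator of $H^0_c(\Spec\FF_q,\ZZ)\otimes\RR\simeq\RR$ is normalized so that $1\in W_{\FF_q}\simeq\ZZ$ maps to $\log(q)$. The principal obstacle is precisely verifying this $\log(q)$-normalization, namely calibrating the trivialization $\lambda$ produced by Theorem B against the classical definition of $R_X$; once this calibration is done, the stated formula follows by a direct determinant calculation.
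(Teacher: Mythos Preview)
Your approach is essentially the same as the paper's: apply the general corollary with $F=\ZZ$, identify $H^i(X,\GG_X)$ with $\mathrm{CH}_0(X,i)$, and track the new contribution from $H^0(X,\ZZ)=\ZZ$ and its pairing with $\mathrm{CH}_0(X)$ under the regulator to produce both the $-1$ in the order and the $\log(q)$ in the special value.

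There is, however, a genuine gap. You write $\mathrm{CH}_0(X)=\mathrm{Pic}(X)$ and $\mathrm{CH}_0(X)_{tor}=\mathrm{Pic}^0(X)$, but these identifications fail for singular $X$, which is precisely the case of interest here (the smooth proper case was already Proposition~\ref{euler_vs_zeta_proper_curve}). For singular $X$, $H^1(X,\GG_X)=\mathrm{coker}(K^\times\to\bigoplus_{v\in X_0}\ZZ)$ is the Chow group of $0$-cycles, not the Picard group of line bundles, and one must argue separately that it has rank $1$ with finite torsion. The paper does this by constructing a degree map $\deg_X:\mathrm{CH}_0(X)\to\ZZ$ with image $f\ZZ$ (where $q=p^f$), then comparing with $\mathrm{Pic}(Y)$ of the normalization $\pi:Y\to X$ via the snake lemma to deduce that $\ker(\deg_X)$ is finite, hence equals $\mathrm{CH}_0(X)_{tor}$.

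This same computation is what pins down the $\log(q)$: the pairing $H^0(X,\ZZ)_\RR\times\mathrm{CH}_0(X)_\RR\to\RR$ factors through $\deg_X$, and since the integral generator of its image is $f$, the determinant in bases modulo torsion is $f\log(p)=\log(q)$, giving $R(\ZZ)=R_X/\log(q)$. Your Weil-group heuristic gestures at this but does not produce the correct normalization without passing through the degree map on $\mathrm{CH}_0(X)$; the point is that $\deg_X$ need not be surjective onto $\ZZ$ when $X$ is viewed over $\FF_p$, and the factor $f$ is exactly what makes the answer come out as $\log(q)$ rather than $\log(p)$.
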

Here $\omega$ is the number of roots of unity in the field of functions, $R_X$ is a regulator and $\mathrm{CH}_0(X,i)$ denote higher Chow groups. This gives a generalization of the classical special value formula of the zeta function of a number field at $s=0$. The result seems to be new; we note that a similar formula at $s=1$ for $X$ furthermore affine and reduced was established by Jordan and Poonen in \cite{Poonen20}.

\paragraph*{Acknowledgments}
This paper was written during my doctoral studies at the Institut de Mathématiques de Bordeaux. I would like to thank my advisor Baptiste Morin; without his guidance and moral support this paper would not exist. I would also like to thank Philippe Cassou-Noguès, Thomas Geisser, Stephen Lichtenbaum, Qing Liu, Matthias Flach and Niranjan Ramachandran for helpful remarks and discussions.

\section{\'Etale cohomology with compact support}\label{etale_cohomology_compact_support}

We will be working with the derived $\infty$-category of abelian groups $\mathscr{D}(\ZZ)$; this is a stable $\infty$-category whose homotopy category is the ordinary derived category $D(\ZZ)$.\footnote{For the theory of derived and stable $\infty$-categories, see \cite[chap. 1]{HA}} Moreover, the projective model structure on unbounded chain complexes of abelian groups is a symmetric monoidal model structure, hence by \cite[4.1.7.6]{HA} the underlying $\infty$-category $\mathscr{D}(\ZZ)$ is closed symmetric monoidal (see also \cite[A.7]{Nikolaus18}), with inner hom $R\Hom(-,-)$ and tensor product $-\otimes^L-$, computed as usual. If $\mathcal{C}$ is an $\infty$-category, then the $\infty$-category of functors $\mathrm{Fun}(\mathcal{C},\mathscr{D}(\ZZ))$ is again a stable $\infty$-category. We will denote $\fib$, $\cofib$ and $[1]$ the fiber, cofiber and shift functors.

Let $U$ be a regular irreducible scheme of finite type over $\ZZ$ and of dimension $1$. We denote $K$ its function field. It is a global field; either $K$ is a number field and $U=\Spec(\mathcal{O}_{K,S})$ is the spectrum of the ring of $S$-integers for $S$ a finite set of places of $K$ including the set $S_\infty$ of all archimedean places, or $K$ is a function field of characteristic $p>0$ and $U$ is an open subscheme of the unique connected smooth proper curve $C$ with function field $K$, in which case we denote by $S$ the set of missing points\footnote{see \cite[3.1]{Poonen20}}. If $S\neq \emptyset$ then $U$ is affine \cite{Goodman69}. 

We recall the Milne étale cohomology with compact support from \cite{ADT}. For each place $v$ of $K$, let $K_v$ be the completion of $K$ at $v$ if $v$ is archimedean and the field of fraction of the henselization $\mathcal{O}^h_v$ otherwise. We will denote pullback to $\eta_v:=\Spec(K_v)$ (via the composite $\eta_v\to \Spec(K) \to U$) by $(-)_{\eta_v}$. Denote $R\Gamma(K_v,-)$ the \'etale cohomology on $\Spec(K_v)$. We have a natural morphism 
\[
R\Gamma(U,-)\to R\Gamma(K_v,(-)_{\eta_v})
\]
Now denote $R\hat{\Gamma}(K_v,-)$ for the Tate group cohomology of $\mathrm{Gal}(K_v^{sep}/K_v)$ when $v$ is archimedean, and ordinary cohomology of profinite groups otherwise. We have a natural morphism 
\[R\Gamma(K_v,-) \to R\hat{\Gamma}(K_v,-)\] for any place $v$. We then define
\begin{itemize}
    \item The (ordinary) étale complex with compact support
    \[
    R\Gamma_c(U,-):=\fib(R\Gamma(U,-)\to \prod_{v \in S}R\Gamma(K_v,(-)_{\eta_v}))
    \]
    \item The (Tate) étale complex with compact support
    \[
    R\hat{\Gamma}_c(U,-):=\fib(R\Gamma(U,-)\to \prod_{v \in S}R\hat{\Gamma}(K_v,(-)_{\eta_v}))
    \]
\end{itemize}
where we take the fiber in the $\infty$-category $\Func(\mathrm{Sh}(U_{et}),\mathscr{D}(\ZZ))$ of functors from the category of abelian sheaves on $U_{et}$ to the derived $\infty$-category of abelian groups $\mathscr{D}(\ZZ)$. The corresponding cohomology groups will be denoted $H^i_c(U,-)$ and $\hat{H}^i_c(U,-)$.

\begin{rmk}
\begin{itemize}
	\item[]
	\item Only the Tate étale complex with compact support is introduced in \cite{ADT} but we will use both crucially.
	\item If $U$ is a curve, $R\Gamma_c(U,-)$ is the usual cohomology with compact support $R\Gamma(C,j_!(-))$ (see proposition \ref{milne_II.2.3}.d) and $R\Gamma_c(U,-)=R\hat{\Gamma}_c(U,-)$.
\end{itemize}
\end{rmk}

The various functoriality properties of the étale complex with compact support are investigated in \cite{ADT}. Let us recall them (for instance for the ordinary complex with compact support) :

\begin{prop}[{\cite[II.2.3]{ADT}}]\label{milne_II.2.3}
    \begin{enumerate}[label=\alph*.]
        \item For any sheaf $F$ on $U$, there is a long exact sequence
        \begin{equation}\label{long_sequence_compact_support}
            \cdots \to H^i_c(U,F) \to H^i(U,F) \to \prod_{v\in S} H^i(K_v,F_{\eta_v}) \to H^{i+1}_c(U,F)\to \cdots
        \end{equation}
        
        \item A short exact sequence $0\to F\to G\to H$ on $U$ gives a fiber sequence
        \[R\Gamma_c(U,F)\to R\Gamma_c(U,G)\to R\Gamma_c(U,H)\]
        \item For any closed immersion $i:Z\hookrightarrow U$ and sheaf $F$ on $Z$, we have a natural isomorphism
        \[
            R\Gamma_c(U,i_\ast(-)) \simeq R\Gamma(Z,-)
        \]
        
        \item For any open immersion $j:V\to U$, we have a natural isomorphism
        \[
        R\Gamma_c(U,j_!(-))\simeq R\Gamma_c(V,-)
        \]
        Thus for any sheaf $F$ on $U$, there is a fiber sequence
        \[
        R\Gamma_c(V,F_{|V})\to R\Gamma_c(U,F)\to \bigoplus_{v\in U\backslash V}R\Gamma(v,i_v^\ast F)
        \]
        where $i_v:v\to U$ is the inclusion of a closed point.
        
        \item For any finite map $\pi:U'\to U$ with $U'$ regular irreducible, we have a natural isomorphism
        \[
        R\Gamma_c(U,\pi_\ast(-))\simeq R\Gamma_c(U',-)
        \]
    \end{enumerate}
\end{prop}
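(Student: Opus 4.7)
The plan is to adapt the classical argument of \cite[II.2.3]{ADT} to the $\infty$-categorical setting used here. Parts (a) and (b) will follow formally from the construction of $R\Gamma_c$ as a fiber: (a) is just the long exact sequence associated with the defining fiber sequence, and for (b) I would use that the fiber functor preserves fiber sequences, so applying it levelwise to the map of fiber sequences induced by $0 \to F \to G \to H \to 0$ yields the desired fiber sequence in $\Func(\mathrm{Sh}(U_{et}),\mathscr{D}(\ZZ))$.

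For (c), since $U$ is irreducible of dimension $1$ and $Z \subset U$ is a closed subscheme of dimension $0$ (the case $Z = U$ being trivially false), the generic point of $U$ avoids $Z$. Hence for each $v \in S$ the pullback $(i_*F)_{\eta_v}$ vanishes (as $\eta_v \to U$ factors through the generic point), so the correction product in the definition of $R\Gamma_c$ vanishes. One concludes by the standard identification $R\Gamma(U, i_*F) \simeq R\Gamma(Z,F)$, valid since $i_*$ is exact on closed immersions.

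For (d), my approach for the first isomorphism is to decompose the set of missing places of $V$ as $S_V = S_U \sqcup (U \setminus V)$ and compute the fiber defining $R\Gamma_c(V,F)$ in two stages. The problem then reduces to showing that $R\Gamma(U, j_!F)$ identifies with the fiber of $R\Gamma(V,F) \to \bigoplus_{w \in U \setminus V} R\Gamma(K_w, F_{\eta_w})$. This should come from the localization triangle $R\Gamma_{U \setminus V}(U, j_!F) \to R\Gamma(U, j_!F) \to R\Gamma(V,F)$ combined with the local computation $R\Gamma_w(\mathcal{O}_w^h, j_!F) \simeq R\Gamma(K_w, F_{\eta_w})[-1]$, itself a consequence of the local triangle at $w$ and the vanishing $R\Gamma(\mathcal{O}_w^h, j_!F) = 0$ (the geometric stalk of $j_!F$ at $w$ is zero since $w \notin V$). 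The fiber sequence in the second part of (d) is then obtained by applying (b) to $0 \to j_!(F|_V) \to F \to i_*i^*F \to 0$ and combining with (c) and the first isomorphism of (d).

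For (e), exactness of $\pi_*$ for finite morphisms yields $R\Gamma(U, \pi_*F) \simeq R\Gamma(U', F)$. The local terms should match via the decomposition $K' \otimes_K K_v \simeq \prod_{w \mid v} K'_w$ (where $K'$ is the function field of $U'$) and Shapiro's lemma, giving $R\Gamma(K_v, (\pi_*F)_{\eta_v}) \simeq \prod_{w \mid v} R\Gamma(K'_w, F_{\eta_w})$; reindexing the product over $S_{U'}$ then identifies the two fibers. The main obstacle is likely the local cohomology computation in part (d), requiring careful treatment of the henselization and verification that the maps involved are the expected ones; once this is established, the remainder is essentially bookkeeping with fiber sequences.
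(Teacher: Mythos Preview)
Your proposal is correct and follows essentially the same approach as the paper. The paper's own proof is extremely terse: it simply refers to \cite{ADT} for (a), (c), (d), (e) and singles out (b) as the only point requiring comment in the $\infty$-categorical setup, proving it via the $3\times 3$ lemma (i.e.\ the observation that $\fib$ preserves limits, hence fiber sequences)---exactly your argument for (b). Your sketches for (c)--(e) are the standard arguments one finds in \cite{ADT} (vanishing of $(i_*F)_{\eta_v}$, the two-stage fiber computation via the local triangle at $w\in U\setminus V$, and finite base change plus the henselian decomposition $K'\otimes_K K_v\simeq\prod_{w\mid v}K'_w$), so there is no substantive difference in method, only in the level of detail you choose to spell out.
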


\begin{proof}
See \cite{ADT}. We have to say something about b. ; it follows from the following lemma.
\end{proof}

\begin{lem}[$3\times 3$ Lemma]\label{nine_stable}
    Let $\mathcal{C}$ be a stable $\infty$-category and let $A\to B \to C$, $A''\to B'' \to C''$ be two fiber sequences in $\mathcal{C}$. Consider a commutative diagram
    \[\begin{tikzcd}
    A \rar \dar & B \rar \dar & C \dar\\
    A'' \rar & B'' \rar & C'' 
    \end{tikzcd}\]
   Let $A'$, $B'$, $C'$ denote the respective fibers of $A\to A''$, etc. Then there is a commutative diagram 
    \[\begin{tikzcd}
    A' \rar \dar & B' \rar \dar & C' \dar\\
    A \rar \dar & B \rar \dar & C \dar\\
     A'' \rar & B'' \rar & C'' 
    \end{tikzcd}\]
    and $A'\to B' \to C'$ is a fiber sequence.
\end{lem}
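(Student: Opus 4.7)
The plan is to first construct the horizontal maps $A' \to B' \to C'$ via the universal property of fibers, and then to deduce that the resulting row is a fiber sequence from the standard identification of the total fiber of a commutative square with iterated fibers taken in either order.

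For the construction, observe that since the top-left square of the given diagram commutes, the composite $A' \to A \to B \to B''$ equals $A' \to A \to A'' \to B''$, which is nullhomotopic because $A' \to A \to A''$ is already null by definition of $A' = \fib(A \to A'')$. The universal property of $B' = \fib(B \to B'')$ then yields a canonical map $A' \to B'$ fitting into a commutative square with the maps to $A$ and $B$. The same procedure produces $B' \to C'$, completing the displayed $3 \times 3$ diagram with commuting squares.

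For the main assertion, I would invoke the standard fact that in a stable $\infty$-category the total fiber of a commutative square
\[
\begin{tikzcd} X \rar \dar & Y \dar \\ Z \rar & W \end{tikzcd}
\]
can be computed either as $\fib(\fib(X\to Y)\to \fib(Z\to W))$ or as $\fib(\fib(X\to Z)\to \fib(Y\to W))$, the two agreeing canonically by the commutation of limits with limits. Applied to the right-hand square of our $3 \times 3$ diagram, this yields on the one hand $\fib(B' \to C')$, and on the other hand, using that the given rows are fiber sequences (so that $\fib(B \to C) \simeq A$ and $\fib(B'' \to C'') \simeq A''$), the object $\fib(A \to A'') = A'$. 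The resulting equivalence $A' \simeq \fib(B' \to C')$ expresses exactly the fiber sequence $A' \to B' \to C'$.

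The point requiring the most care, and the main obstacle in the argument, is checking that the equivalence $A' \simeq \fib(B' \to C')$ extracted from the total-fiber formula is compatible with the morphism $A' \to B'$ constructed in the previous paragraph, so that one genuinely obtains a fiber sequence with the expected structure maps rather than an abstract equivalence of objects. This follows from naturality of the iterated-limit comparison, since both maps arise from the same cone on $B \to B''$ induced by $A' \to A$ together with the canonical null-homotopy of $A' \to A''$, but it is where the homotopy-coherent bookkeeping must be done carefully.
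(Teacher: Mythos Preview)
Your proof is correct and rests on the same underlying principle as the paper's, namely that limits commute with limits. However, the paper packages this much more efficiently: it simply observes that the fiber functor $\fib:\Fun(\Delta^1,\mathcal{C})\to\mathcal{C}$ preserves all limits (citing \cite[1.1.1.8]{HA}), hence preserves fiber sequences. Since the hypotheses say precisely that $(A\to A'')\to(B\to B'')\to(C\to C'')$ is a fiber sequence in $\Fun(\Delta^1,\mathcal{C})$ (limits there being computed pointwise), applying $\fib$ immediately gives the fiber sequence $A'\to B'\to C'$ together with the correct structure maps. This one-line argument sidesteps entirely the two-step procedure you carry out---first constructing $A'\to B'\to C'$ by hand via universal properties, then separately identifying $A'\simeq\fib(B'\to C')$ via the total-fiber trick, and finally checking that these agree. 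All of that bookkeeping is absorbed into the single statement that $\fib$ is limit-preserving; in particular, the compatibility you flag as ``the main obstacle'' is automatic in the paper's formulation.
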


\begin{proof}
By \cite[1.1.1.8]{HA}, the fiber functor $\mathrm{fib}$ preserves all limits, hence preserves fiber sequences.
\end{proof}

Recall that a $\ZZ$-constructible sheaf on $U$ is an abelian sheaf $F$ on $U_{et}$ such that $U$ can we written as the union of locally closed subschemes $Z_i$, with $F_{|Z_i}$ locally constant associated to an abelian group of finite type. Equivalently, because $U$ is of dimension $1$, this means that there is an open subscheme $V\subset U$ with closed complement $Z$, such that $F_{|V}$ is locally constant associated to an abelian group of finite type, and on the other hand for every closed point $v\in Z$, the pullback $F_v$ is a discrete $G_v:=\Gal(\kappa(v)^{sep}/\kappa(v))$-module of finite type, where $\kappa(v)$ is the residue field at $v$.

\paragraph{Dévissage for $\ZZ$-constructible sheaves} 

Let us discuss the dévissage argument we will often use ; this methodology comes from \cite{Tran16} and was used in \cite{Geisser20}. Let $F$ be a $\ZZ$-constructible sheaf. There is a short exact sequence
\[
0\to F_{tor} \to F \to F/F_{tor} \to 0
\]
where $F_{tor}$ is torsion hence constructible, and $F/F_{tor}$ is torsion free. We will often be able to handle constructible sheaves ; let us thus suppose that $F$ is torsion-free. By Artin induction (\cite[4.1,4.4]{Swan60}), there exists for the pullback $F_\eta$ to the generic point a short exact sequence of the form
\[
0\to (F_\eta)^n\oplus \bigoplus \mathrm{ind}_{G_K}^{G_{L_i}}\ZZ \to \bigoplus \mathrm{ind}^{G_ {L_j'}}_{G_K}\ZZ \to N \to 0
\]
where $n$ is an integer, $N$ is a finite $G_K$-module and $L_i$, $L_j'$ are a finite number of finite Galois extensions of $K$. By spreading out, there exists a dense open subscheme $V\subset U$, finite normalization morphims of $V$ in the above finite Galois extensions of $K$ of the form $\pi_i: W_i \to V$, $\pi_j': W_j'\to V$ and a constructible sheaf $G$ on $V$ such that there is a short exact sequence
\[
0 \to (F_{|V})^n\oplus \bigoplus (\pi_i)_\ast \ZZ \to \bigoplus (\pi_j')_\ast \ZZ \to G \to 0
\]
Finally, for $Z$ the closed complement of $V$, there is a short exact sequence
\[
0 \to F_V \to F \to F_Z \to 0
\]
where $F_V=j_! (F_{|V})$ (with $j:V\hookrightarrow U$ the open inclusion) and $F_Z=i_\ast i^\ast F = \bigoplus_{v\in Z} (i_v)_\ast i_v^\ast F$ (with $i:Z\hookrightarrow U$ resp. $i_v:v\hookrightarrow U$ the closed inclusion). We can again use Artin induction on each point of $Z$.

With this dévissage, general arguments about $\ZZ$-constructible sheaves that are functorial with respect to the operations $j_!$, $i_\ast$, $\pi_\ast$ mentioned above reduce to statements about constructible sheaves and the constant sheaf $\ZZ$ on $U$ and on a point. Once we have handled the case of a  point, the short exact sequence $0 \to j_! \ZZ \to \ZZ \to i_\ast \ZZ \to 0$ will enable us to further suppose that $U$ is either affine or proper, depending on which situation we prefer.

\paragraph{The comparison complex}

Let $v$ be an archimedean place of $K$. The complex $R\Gamma(K_v,-)$ is represented by the complex of homogeneous cochains $C^\ast(K_v,-)$, obtained by taking $\Hom$ from the standard free resolution of $\ZZ$ into a given module. Moreover, the complex $R\hat{\Gamma}(K_v,-)$ is represented by the complex $\hat{C}^\ast(K_v,-)$ obtained by taking $\Hom$ from the complete standard resolution of $\ZZ$. There is a natural arrow $C^\ast(K_v,-)\to \hat{C}^\ast(K_v,-)$. Let us introduce the "homology complex" $C_\ast(K_v,-)$, a complex in negative degree obtained by tensoring a module with the standard free resolution of $\ZZ$. The norm map $N:M\to M$ for any discrete $G_{K_v}$-module $M$ induces an arrow $C_\ast(K_v,-)\to C^\ast (K_v,-)$, and the sequence
\[
C_\ast(K_v,-)\to C^\ast (K_v,-)\to \hat{C}^\ast(K_v,-)
\]
is a fiber sequence.
Let us introduce a "comparison complex"
\[
    T:=\fib\left(\prod_{v\in S_\infty}R\Gamma(K_v,(-)_{\eta_v})\to \prod_{v\in S_\infty}R\hat{\Gamma}(K_v,(-)_{\eta_v})\right)=\prod_{v \in S_\infty}C_\ast(K_v,(-)_{\eta_v})
\]
Thus $T$ computes homology at the archimedean places.

Applying the $3\times 3$ lemma (lemma \ref{nine_stable}), we find a commutative diagram of fiber sequences
\[
\begin{tikzcd}
    \fib(R\Gamma_c(U,-)\to R\hat{\Gamma}_c(U,-)) \rar \dar & 0 \rar \dar & T \dar \\
    R\Gamma_c(U,-) \rar \dar & R\Gamma(U,-) \rar \dar["="] & \displaystyle\prod_{v \in S}R\Gamma(K_v,(-)_{\eta_v}) \dar\\
    R\hat{\Gamma}_c(U,-) \rar & R\Gamma(U,-) \rar & \displaystyle\prod_{v \in S}R\hat{\Gamma}(K_v,(-)_{\eta_v}) 
\end{tikzcd}
\]
We deduce an isomorphism $\fib\left(R\Gamma_c(U,-)\to R\hat{\Gamma}_c(U,-)\right)=T[-1]$, that is
\[
\cofib\left(R\Gamma_c(U,-)\to R\hat{\Gamma}_c(U,-)\right)=T
\]
We have
\begin{equation}\label{equation_T}
	H^i(T(F))=\left \{ \begin{array}{cc}
	\prod_{v\in S_\infty}\hat{H}^{i-1}(K_v,F_{\eta_v}) & i<0\\
	\prod_{v \in S_\infty}H_0(K_v,F_{\eta_v}) & i=0\\
	0 & i>0
	\end{array}\right.
\end{equation}

\begin{prop}\label{compact_equals_tate_compact}
    Let $F$ be a $\ZZ$-constructible sheaf on $U$. Then $H^i_c(U,F)=\hat{H}^i_c(U,F)$ for $i\geq 2$ and $H^1_c(U,F)\to \hat{H}^1_c(U,F)$ is surjective.
\end{prop}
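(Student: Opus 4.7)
The plan is to read the proposition directly off the fiber sequence
\[
R\Gamma_c(U,F)\to R\hat\Gamma_c(U,F)\to T(F)
\]
established just before the statement, together with the explicit computation of $H^i(T(F))$ given in equation \eqref{equation_T}.

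First, I would pass to the long exact sequence in cohomology associated to this fiber sequence:
\[
\cdots\to H^{i-1}(T(F))\to H^i_c(U,F)\to \hat H^i_c(U,F)\to H^i(T(F))\to H^{i+1}_c(U,F)\to\cdots
\]
By \eqref{equation_T}, the comparison complex $T(F)$ has no cohomology in strictly positive degrees, since $T(F)=\prod_{v\in S_\infty}C_\ast(K_v,F_{\eta_v})$ is concentrated in non-positive degrees and thus computes homology. Concretely, $H^i(T(F))=0$ for every $i\geq 1$.

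Then for $i\geq 2$, both $H^{i-1}(T(F))$ and $H^i(T(F))$ vanish, so the connecting and comparison maps force $H^i_c(U,F)\to\hat H^i_c(U,F)$ to be an isomorphism. For $i=1$, we only know that $H^1(T(F))=0$, whence exactness of
\[
H^0(T(F))\to H^1_c(U,F)\to\hat H^1_c(U,F)\to 0
\]
yields the claimed surjectivity.

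There is no real obstacle here: the content of the proposition is entirely packaged into the vanishing of $H^{\geq 1}(T(F))$, and the $\ZZ$-constructibility of $F$ is not actually used for this particular statement (it plays a role in subsequent finiteness and perfectness arguments). The only point to notice is the sign convention in \eqref{equation_T} which places $T(F)$ in non-positive degrees; this is exactly what makes the argument work for $i\geq 1$ rather than only for $i$ much larger.
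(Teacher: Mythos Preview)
Your proof is correct and follows exactly the paper's approach: the paper's own proof is the single sentence ``The long exact sequence of cohomology associated to the fiber sequence $R\Gamma_c(U,F)\to R\hat{\Gamma}_c(U,F)\to T(F)$ gives the claim,'' and you have simply unpacked this in full. Your observation that $\ZZ$-constructibility is not needed here is also accurate.
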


\begin{proof}
The long exact sequence of cohomology associated to the fiber sequence $R\Gamma_c(U,F)\to R\hat{\Gamma}_c(U,F)\to T(F)$ gives the claim.
\end{proof}

\begin{prop}\label{compact_support_finite_type}
    Let $F$ be a $\ZZ$-constructible sheaf on $U$. Then $H^i_c(U,F)$ and $\hat{H}^i_c(U,F)$ are of finite type for $i=0,1$.  
\end{prop}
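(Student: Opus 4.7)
The plan is to reduce, by dévissage via Artin induction, to the case of the constant sheaf $\ZZ$ on a regular $1$-dimensional arithmetic scheme, where the claim follows from a direct computation with the long exact sequence \eqref{long_sequence_compact_support}. First I observe that $\hat H^i_c(U,F)$ inherits finite generation from $H^i_c(U,F)$: from the cofiber sequence $R\Gamma_c(U,F)\to R\hat\Gamma_c(U,F)\to T(F)$ established above, the formula \eqref{equation_T} shows that $H^i(T(F))$ is a finite product over $S_\infty$ of Tate (co)homology groups of the finitely generated $G_{K_v}$-module $F_{\eta_v}$, hence finitely generated (and zero for $i>0$). The long exact sequence
\[
    H^{i-1}(T(F))\to H^i_c(U,F)\to \hat H^i_c(U,F)\to H^i(T(F))
\]
then gives finite generation of $\hat H^i_c(U,F)$ for $i=0,1$ from that of $H^i_c(U,F)$.

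To prove finite generation of $H^i_c(U,F)$ for $i=0,1$, the short exact sequence $0\to F_{tor}\to F\to F/F_{tor}\to 0$, combined with the classical finiteness of $R\Gamma_c(U,-)$ on constructible (torsion) sheaves \cite{ADT}, reduces to the torsion-free case. Pick a dense open $V\subseteq U$ on which $F|_V$ is locally constant of finite type. Artin induction at the generic point, as recalled in the dévissage paragraph, yields a short exact sequence on $V$
\[
0\to (F|_V)^n\oplus\bigoplus_i(\pi_i)_\ast\ZZ\to\bigoplus_j(\pi_j')_\ast\ZZ\to G\to 0,
\]
where $\pi_i,\pi_j'$ are finite normalizations and $G$ is constructible. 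By proposition \ref{milne_II.2.3}.e, $R\Gamma_c(V,\pi_\ast\ZZ)\simeq R\Gamma_c(W,\ZZ)$ for the normalization $W$. For $\ZZ$ on such a $W$, the long exact sequence \eqref{long_sequence_compact_support} combined with $H^0(W,\ZZ)=\ZZ$, $H^1(W,\ZZ)=0$, $H^0(K_v,\ZZ)=\ZZ$, and $H^1(K_v,\ZZ)=0$ shows $H^0_c(W,\ZZ)$ and $H^1_c(W,\ZZ)$ are finitely generated. Together with the finiteness of $H^i_c(V,G)$ for constructible $G$, the long exact sequence of the displayed SES implies that $H^i_c(V,F|_V)$ is finitely generated for $i=0,1$, as a direct summand of $H^i_c(V,(F|_V)^n\oplus\bigoplus_i(\pi_i)_\ast\ZZ)$.

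Finally, I pass from $V$ to $U$ using the SES $0\to j_!(F|_V)\to F\to i_\ast i^\ast F\to 0$: proposition \ref{milne_II.2.3}.c,d gives $R\Gamma_c(U,j_!(F|_V))\simeq R\Gamma_c(V,F|_V)$ (already handled) and $R\Gamma_c(U,i_\ast i^\ast F)=\bigoplus_{v\in U\setminus V}R\Gamma(v,F_v)$, which is finitely generated in degree $0$ (as $F_v^{G_v}$ is a subgroup of the finitely generated $F_v$) and finite in positive degrees. The associated long exact sequence yields finite generation of $H^i_c(U,F)$ for $i=0,1$. The main technical obstacle is the bookkeeping in the Artin induction step: the SES places $F|_V$ on the subobject side together with auxiliary summands, so to extract finite generation for $F|_V$ alone one needs $H^{-1}_c(V,G)=0$ and the finiteness of $H^0_c(V,G)$ to control the relevant connecting maps.
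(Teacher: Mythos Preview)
Your argument is correct and follows essentially the same dévissage strategy as the paper: reduce to the three base cases (constructible sheaves via Artin--Verdier, the constant sheaf $\ZZ$ via the long exact sequence \eqref{long_sequence_compact_support}, and closed points via Galois cohomology of $\hat\ZZ$). The only cosmetic difference is the direction of the comparison between $H^i_c$ and $\hat H^i_c$: the paper reduces to $\hat H^i_c$ and proves finite type there, whereas you prove it for $H^i_c$ and deduce it for $\hat H^i_c$; either order works since $T(F)$ has finite-type cohomology. One small point you assert without justification is that $H^1(G_v,F_v)$ is finite for a closed point $v$; the paper supplies the standard argument (inflation--restriction to reduce to trivial action, then $H^1(\hat\ZZ,\ZZ)=0$ and $H^1(\hat\ZZ,\ZZ/n)\simeq\ZZ/n$), and you should include it for completeness.
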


\begin{proof}
The cofiber $T(F)$ of $R\Gamma_c(U,F)\to R\hat{\Gamma}_c(U,F)$ computes homology at the archimedean places, thus its cohomology groups are finite type. We reduce to showing the statement only for $\hat{H}^i_c(U,F)$. By the dévissage argument and the functoriality properties of proposition \ref{milne_II.2.3}, we have to treat three cases: the constructible case, the case of the constant sheaf $\ZZ$, and the case of Galois cohomology of a discrete $G_v$-module of finite type for $v$ a closed point.
\begin{itemize}
    \item The first is part of the statement of Artin-Verdier duality (see Theorem \ref{AVduality}),
    \item The second is a computation using the long exact sequence \ref{long_sequence_compact_support}: we have $H^0(U,\ZZ)=\ZZ$ and $H^1(U,\ZZ)=\Hom_{cont}(\pi_1^{SGA3}(U),\ZZ)=0$ because $U$ is normal and noetherian hence the étale fundamental group $\pi_1^{SGA3}(U)=\pi_1^{SGA1}(U)$ is profinite. Then \ref{long_sequence_compact_support} gives an exact sequence 
    \[
    0 \to \hat{H}^0_c(U,\ZZ)\to \ZZ \to \bigoplus_{v \in S\backslash S_\infty} \ZZ \oplus \bigoplus_{v \text{~real}} \ZZ/2\ZZ \to \hat{H}^1_c(U,\ZZ) \to 0
    \]
    \item Let $G$ be a profinite group and $M$ a discrete $G$-module of finite type (as an abelian group). It is clear that $H^0(G,M)=M^G$ is also of finite type. There exists an open subgroup $H$ such that $H$ acts trivially on $G$ ; then the inflation-restriction exact sequence reduces us to verifying that $H^1(G,M)$ is of finite type for $M$ a discrete $G$-module of finite type with trivial action. Here we have $G=G_v=\hat{\ZZ}$ ; but $H^1(\hat{\ZZ},\ZZ)=\Hom_{cont}(\hat{\ZZ},\ZZ)=0$ and $H^1(\hat{\ZZ},\ZZ/n\ZZ)=\Hom_{cont}(\hat{\ZZ},\ZZ/n\ZZ)\simeq \ZZ/n\ZZ$ so we are done.
\end{itemize}
\end{proof}

\section{The Weil-étale complex with compact support}
\subsection{Construction of the Weil-étale complex with compact support}\label{construction_weil_etale}

\begin{defi}\label{def_big_tiny}
Let $F$ be a $\ZZ$-constructible sheaf on $U$. We adapt terminology from \cite{Tran16} and say that
\begin{itemize}
    \item $F$ is a big sheaf if $\Ext^1_U(F,\GG_m)$ is torsion (hence finite) \footnote{Those are, up to some modifications, the strongly $\ZZ$-constructible sheaves of \cite{Tran16} in the number field case}
    \item $F$ is a tiny sheaf if $H^1_c(U,F)$ is torsion (hence finite)
    \item A big-to-tiny morphism is a morphism of sheaves $F\to G$ where either $F$ and $G$ are both big, or are both tiny, or $F$ is big and $G$ is tiny ; a big-to-tiny short exact sequence is a short exact sequence with big-to-tiny morphisms.
\end{itemize}
\end{defi}

\begin{rmk}
\begin{itemize}
	\item[]
    \item A constructible sheaf is both big and tiny (see Theorem \ref{AVduality})
    \item A constant sheaf defined by a finite type abelian group is big, except when $U=C$ is a proper curve. Indeed, this reduces to the case of $\ZZ$, and $\Ext^1_U(\ZZ,\GG_m)=H^1(U,\GG_m)=\mathrm{Pic}(U)$ is finite when $U$ is affine\footnote{This is well-known : see for instance \cite[2.3]{Moret89}}
    \item Let $F$ be a locally constant sheaf defined by a finite type abelian group. Let $\pi:V\to U$ be a finite Galois cover with group $G$ trivializing $F$ ; the low degree exact sequence of the spectral sequence
    \[
    H^p(G,\Ext^q_V(F_{|V},\GG_m))\Rightarrow \Ext^{p+q}_U(F,\GG_m)
    \]
    shows from the constant case that $F$ is big, except when $U=C$.
    \item A $\ZZ$-constructible sheaf supported on a finite closed subscheme is tiny ; indeed, if $i:v\hookrightarrow U$ is the inclusion of a closed point, we have that $H^1_c(U,i_\ast M)=H^1(G_v,M)$ is finite by the argument given in the proof of Proposition \ref{compact_support_finite_type}.
    \item On $U=C$, a locally constant sheaf defined by a finite type abelian group is tiny. This reduces to the case of $\ZZ$, and we have $H^1_c(C,\ZZ)=H^1(C,\ZZ)=\Hom_{cont}(\pi_1(U),\ZZ)=0$.
\end{itemize}
\end{rmk}

Let $F$ be a big or tiny sheaf. We construct in this section a complex $R\Gamma_{W,c}(U,F)$ in the derived category of the integers $D(\ZZ)$, exact and functorial in big-to-tiny morphisms. We encountered some difficulty and failed to define it for all $\ZZ$-constructible sheaves, but this will be sufficient to obtain a well-defined Euler characteristic. Following the approach of \cite{Morin14} and \cite{Flach18}, the complex fits in a distinguished triangle
\[
R\Hom(R\Hom_U(F,\GG_m),\QQ[-3])\to R\Gamma_c(U,F)\to R\Gamma_{W,c}(U,F)\to
\]
where the map $R\Hom(R\Hom_U(F,\GG_m),\QQ[-3])\to R\Gamma_c(U,F)$ is lifted from the canonical map $R\Hom(R\Hom_U(F,\GG_m),\QQ[-3])\to R\Hom(R\Hom_U(F,\GG_m),\QQ/\ZZ[-3])$ \textit{via} Artin-Verdier duality.

Denote
\begin{align*}
    D:=R\Hom(R\Hom_U(-,\GG_m),\QQ[-3])\\
    E:=R\Hom(R\Hom_U(-,\GG_m),\QQ/\ZZ[-3])
\end{align*}
Artin-Verdier duality gives a pairing $R\hat{\Gamma}_c(U,-)\otimes R\Hom_U(-,\GG_m) \to \QQ/\ZZ[-3]$ that induces an "Artin-Verdier" arrow 
\[
R\hat{\Gamma}_c(U,-)\to E.\]
We have $H^i(D)=\Ext^{3-i}_U(-,\GG_m)^\dagger$ and $H^i(E)=\Ext^{3-i}_U(-,\GG_m)^D$, where $(-)^\dagger:=\Hom(-,\QQ)$ and $(-)^D:=\Hom(-,\QQ/\ZZ)$. The cohomology groups of the Artin-Verdier arrow are described as following:

\begin{thm}[Artin-Verdier duality, {\cite[II.3.1]{ADT}}]\label{AVduality}
Let $F$ be a $\ZZ$-constructible sheaf on $U$. The groups $\Ext^i_U(F,\GG_m)$ and $\hat{H}^i_c(U,F)$ are finite for $i<0$, $i>4$, of finite type for $i=0,1$ and torsion of cofinite type for $i=2,3$. Moreover $\hat{H}^i_c(U,F)=0$ for $i\geq 4$. The Artin-Verdier arrow induces an isomorphism for $i=0,1$
\[
\hat{H}^i_c(U,F)^\wedge \xrightarrow{\simeq} \Ext^{3-i}_U(F,\GG_m)^D
\]
were $(-)^\wedge$ is the profinite completion with respect to subgroups of finite index,
and for $i\neq 0,1$ an isomorphism
\[
\hat{H}^i_c(U,F) \xrightarrow{\simeq} \Ext^{3-i}_U(F,\GG_m)^D
\]
If $F$ is constructible, all groups are finite and the Artin-Verdier arrow is an isomorphism.
\end{thm}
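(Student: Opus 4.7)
The plan is to follow the strategy of \cite{ADT}, extending the classical constructible Artin-Verdier duality to $\ZZ$-constructible sheaves via the dévissage described in the preceding subsection. The first step is to handle the constructible (i.e.\ torsion) case. Decomposing $F$ into $\ell$-primary parts, one reduces to $\ell F = 0$. The pairing comes from the global trace map $R\hat\Gamma_c(U,\GG_m)\to\QQ/\ZZ[-2]$ (built from Brauer group computations and class field theory at each place of $S$); this induces the Artin-Verdier arrow. To show it is an isomorphism with finite values, use the standard reduction to $\mu_\ell$ and $\ZZ/\ell\ZZ$ on small open subschemes via $j_!$, apply Poincaré duality at the generic point together with local Tate duality at each place, and glue with the $3\times 3$ lemma.

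Once the constructible case is established, the torsion sequence $0\to F_{tor}\to F\to F/F_{tor}\to 0$ reduces the statement for a general $\ZZ$-constructible $F$ to the torsion-free case. Applying Artin induction at the generic point and spreading out (as explained in the paper), together with the functoriality $R\Gamma_c(U,j_!-)\simeq R\Gamma_c(V,-)$ and $R\Gamma_c(U,\pi_\ast-)\simeq R\Gamma_c(U',-)$ from Proposition~\ref{milne_II.2.3}, and the analogous functoriality of $R\Hom_U(-,\GG_m)$ (via adjunctions and the fact that $\pi$ is finite hence $\pi_\ast=\pi_!$), one reduces to three basic cases: the constant sheaf $\ZZ$ on $U$ itself, its pushforwards from finite normalizations of $U$, and $i_{v\ast}\ZZ$ for $v$ a closed point. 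By functoriality the second case is already the first case applied to $U'$.

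Next one treats these base cases by explicit computation. For $F=i_{v\ast}M$ with $M$ a $G_v$-module of finite type, one has $\hat H^i_c(U,F)=H^i(G_v,M)$ and $\Ext^i_U(i_{v\ast}M,\GG_m)$ computes via local cohomology and $\GG_{m,v}\simeq\ZZ$ on a point; the claim then follows from Tate-style duality for the profinite group $G_v=\hat\ZZ$ acting on a module of finite type, combined with Pontryagin duality between finite type and torsion-cofinite-type groups after profinite completion. For $F=\ZZ$ on $U$, one computes $H^i(U,\GG_m)$ directly: $H^0=\mathcal{O}_{U}^\times$ is finite type by the Dirichlet $S$-unit theorem, $H^1=\mathrm{Pic}(U)$ is finite type (finite if $U$ is affine), $H^2$ and $H^3$ are handled by global class field theory (fitting into sequences involving local Brauer groups and $\QQ/\ZZ$). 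On the other hand $\hat H^i_c(U,\ZZ)$ is computed via the long exact sequence~\eqref{long_sequence_compact_support}. The duality isomorphism then amounts to reciprocity in class field theory for $U$.

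Finally the three types are assembled via the five lemma applied to the dévissage short exact sequences, using that the duality map is natural in $F$. The main obstacle is the case $i=0,1$, where one side is of finite type and the other is cofinite-type torsion; here one must carefully keep track of profinite completions and ensure that the five lemma argument respects them, which is the reason for the statement being phrased with $(-)^\wedge$ in those degrees. A secondary subtlety is the uniform treatment of the number field and function field cases; in the latter, $U$ may be proper (so $S=\emptyset$), forcing separate verification of several finiteness claims (notably that $H^1(C,\GG_m)=\mathrm{Pic}(C)$ is of finite type via the Jacobian).
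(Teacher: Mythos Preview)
Your proposal is correct and follows the standard d\'evissage strategy; however, the paper does not actually prove this theorem. It is quoted directly from \cite[II.3.1]{ADT}, with only a brief remark explaining how to translate Milne's dual formulation into the stated form (via $\Hom_{cont}(-,\RR/\ZZ)$) and how to deduce the one assertion not explicit in Milne, namely $\hat{H}^4_c(U,F)=0$, by d\'evissage from the cases $F=\ZZ$, $F$ constructible, and $F$ supported on a closed point. Your outline is essentially a reconstruction of Milne's proof in \cite{ADT}, so there is nothing to compare beyond noting that the paper treats the result as a black box.
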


We recall that an abelian group is torsion of cofinite type if it is of the form $(\QQ/\ZZ)^n\oplus A$ where $A$ is finite and $n$ an integer.

The theorem implies in particular that $D$ is cohomologically concentrated in degree $2$ and $3$.

\begin{rmk}
In \cite{ADT}, the theorem is expressed in its dual form ; we obtain the one above by applying $(-)^\ast:=\Hom_{cont}(-,\RR/\ZZ)$, which coincides with $\Hom_{cont}(-,\QQ/\ZZ)$ for profinite groups (where $\QQ/\ZZ$ has the discrete topology) and with $(-)^D$ for discrete torsion groups. We use furthermore that for $M$ discrete of finite type, $(M^\wedge)^\ast=M^\ast$ and the Pontryagin duality $M=M^{\ast\ast}$ for any abelian locally compact $M$.

The fact that $\hat{H}^4_c(U,F)=0$ does not appear in \cite{ADT} but is easily deduced by the dévissage arguement from the functoriality properties of $\hat{H}^i_c(U,-)$, the case of $\ZZ$, which is \cite[II.2.11(d)]{ADT}, the case of constructible sheaves, which is \cite[II.3.12(a)]{ADT}, and the case of Galois cohomology of $\hat{\ZZ}$, which is of strict cohomological dimension $2$ (see for instance \cite[XIII.1]{Serrelocaux}).
\end{rmk}

We want to construct an arrow $\beta_F$ in $D(U_{et})$ making the following diagram commute
\[
    \begin{tikzcd}
    D_F \rar["\beta_F",dotted] \arrow[drr,"\pi",swap] & R\Gamma_c(U,F) \rar & R\hat{\Gamma}_c(U,F) \dar["\text{Artin-Verdier}"]\\
    && E_F
    \end{tikzcd}
\]
Let us analyze $\Hom_{D(\ZZ)}(D_F,R\Gamma_c(U,G))$ for $F$ big or $G$ tiny. There is a spectral sequence for $\Hom_{D(\ZZ)}(K,L)$ (\cite[III.4.6.10]{theseverdier}) of the form
\[
E_2^{p,q}=\prod_{i \in \ZZ}\mathrm{Ext}^p(H^i(K),H^{i+q}(L))\Rightarrow H^n(R\Hom(K,L))=\Hom(K,L[n])=E^n
\]
In $D(\ZZ)$ we have $\Ext^i=0$ for $i\neq 0,1$ so the sequence degenerates and gives a short exact sequence
\begin{equation}\label{verdier_exact_sequence}
    0 \to \prod_i \Ext^1(H^i(K),H^{i-1}(L)) \to \Hom_{D(\ZZ)}(K,L) \to \prod_i \Hom(H^i(K),H^i(L)) \to 0
\end{equation}
In our case, this gives
\begin{align*}
0 &\to \Ext^1(\Ext^1_U(F,\GG_m)^\dagger,H^1_c(U,G))\times \Ext^1(\Hom_U(F,\GG_m)^\dagger,H^2_c(U,G)) \to \Hom_{D(\ZZ)}(D_F,R\Gamma_c(U,G))\\ &\to \prod_i\Hom(H^i(D_F),H^i(R\Gamma_c(U,G))) \to 0
\end{align*}
where $(-)^\dagger:=\Hom(-,\QQ)$. Since $H^2_c(U,-)$ is torsion of cofinite type and either $\Ext^1_U(F,\GG_m)$ or $H^1_c(U,G)$ are finite, the two terms in the product on the left vanish and we have an isomorphism
\[
\Hom_{D(\ZZ)}(D_F,R\Gamma_c(U,G)) \xrightarrow{\simeq} \Hom(H^2(D_F),H^2_c(U,G))\times \Hom(H^3(D_F),H^3_c(U,G))
\]
because $D_F$ is cohomologically concentrated in degree $2$ and $3$. The Artin-Verdier arrow is an isomorphism in degree $2$ and $3$, and $\hat{H}^i_c(U,F)=H^i_c(U,F)$ for $i\geq 2$, so the arrow $\beta_F$ exists and is unique. Moreover a big-to-tiny morphism $f:F\to G$ gives rise to a commutative square
\[
\begin{tikzcd}
D_F \dar["f_\ast"] \rar["\beta_F"] & R\Gamma_c(U,F) \dar["f_\ast"]\\
D_G \rar["\beta_G"] & R\Gamma_c(U,G)
\end{tikzcd}
\]
Indeed by the above isomorphism it suffices to check commutativity on cohomology groups in degree $2$ and $3$.

\begin{defi}
Let $F$ be either big or tiny. We define
\[
R\Gamma_{W,c}(U,F):=Cone(\beta_F)
\]
the Weil-étale cohomology with compact support with coefficients in $F$.
\end{defi}

\subsection{Computing the Weil-étale cohomology with compact support}\label{computations}

\begin{defi}
Let $F$ be a big or tiny sheaf. We denote by $H^i_{W,c}(U,F):=H^i(R\Gamma_{W,c}(U,F))$ the Weil-étale cohomology groups with compact support.
\end{defi}
The complex $D_F$ is cohomologically concentrated in degree $2$ and $3$, so the long exact cohomology sequence gives $H^i_{W,c}(U,F)=H^i_c(U,F)$ for $i\leq 0$, $i\geq 4$. In particular $H^i_{W,c}(U,F)=0$ for $i\neq 0,1,2,3$. The remaining long exact sequence reads 
\[\begin{tikzcd}[sep=small]
& 0 \rar & H^1_c(U,F) \rar & H^1_{W,c}(U,F) &\\
\rar & \Ext^1(F,\GG_m)^\dagger \rar["nat"] & H^2_c(U,F) \rar & H^2_{W,c}(U,F)&\\
\rar & \Hom(F,\GG_m)^\dagger \rar["nat"] & H^3_c(U,F) \rar & H^3_{W,c}(U,F) \rar & 0
\end{tikzcd}\]
From the construction, we see that the natural arrows $\Ext^{3-i}_U(F,\GG_m)^\dagger\to H^{i}_c(U,F)\simeq \hat{H}^{i}_c(U,F)\simeq \Ext_U^{3-i}(F,\GG_m)^D$ for $i=0,1$ identify with the post-composition with the projection $\QQ\to \QQ/\ZZ$. We thus get short exact sequences and an isomorphism
\begin{align*}
    & 0\to H^1_c(U,F) \to H^1_{W,c}(U,F)\to \Hom(\Ext^1_U(F,\GG_m),\ZZ)\to 0\\
    & 0 \to (\Ext^1_U(F,\GG_m)_{tor})^D \to H^2_{W,c}(U,F) \to \Hom(\Hom_U(F,\GG_m),\ZZ) \to 0\\
    & H^3_{W,c}(U,F)\simeq (\Hom_U(F,\GG_m)_{tor})^D
\end{align*}
In particular, all cohomology groups are of finite type, hence $R\Gamma_{W,c}(U,F)$ is a perfect complex.

\subsection{Functoriality of the Weil-étale complex}\label{functoriality_weil_etale_complex}

\begin{thm}\label{long_exact_cohomolgy_sequence}
Let $F$ be either big or tiny. The complex $R\Gamma_{W,c}(U,F)$ is well-defined up to unique isomorphism, is functorial in big-to-tiny morphisms, and gives a long exact cohomology sequence for big-to-tiny short exact sequences.
\end{thm}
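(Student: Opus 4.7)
The construction section has already done the heavy lifting, so the plan is to harvest three formal consequences: well-definedness, functoriality, and the long exact sequence. The reusable fact is that under the bigness/tinyness hypothesis the spectral sequence \eqref{verdier_exact_sequence} collapses to its $H^2, H^3$ components, which is what gives uniqueness of $\beta_F$ and commutativity of the naturality squares.

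For well-definedness, I would recall that $R\Gamma_{W,c}(U,F) := \cofib(\beta_F)$ in $\mathscr{D}(\ZZ)$, and $\beta_F$ was characterized in the construction as the unique morphism lifting the Artin--Verdier arrow $\pi_F$. Since cofibers of a specified morphism in a stable $\infty$-category are well-defined up to a contractible space of choices, $R\Gamma_{W,c}(U,F)$ is well-defined up to unique isomorphism in $D(\ZZ)$. For functoriality, the naturality square
\[
\begin{tikzcd}
D_F \rar["\beta_F"] \dar["f_\ast"] & R\Gamma_c(U,F) \dar["f_\ast"] \\
D_G \rar["\beta_G"] & R\Gamma_c(U,G)
\end{tikzcd}
\]
commutes for a big-to-tiny $f:F\to G$ by the same argument used to establish uniqueness of $\beta$; taking cofibers produces the induced map on Weil-étale complexes, and preservation of identities and composition is formal once one checks that big-to-tiny morphisms compose to big-to-tiny morphisms (a quick three-case verification on the possible types big-to-big, big-to-tiny, tiny-to-tiny).

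For the long exact cohomology sequence, I would start from a big-to-tiny short exact sequence $0 \to F' \to F \to F'' \to 0$. Proposition \ref{milne_II.2.3}.b provides a fiber sequence $R\Gamma_c(U,F') \to R\Gamma_c(U,F) \to R\Gamma_c(U,F'')$, and since $R\Hom_U(-,\GG_m)$ and $R\Hom(-,\QQ[-3])$ are triangulated we also obtain a fiber sequence $D_{F'} \to D_F \to D_{F''}$. Naturality of $\beta$ assembles these into a commutative ladder of fiber sequences in $\mathscr{D}(\ZZ)$; the dual of Lemma \ref{nine_stable} then produces a fiber sequence of cofibers
\[
R\Gamma_{W,c}(U,F') \to R\Gamma_{W,c}(U,F) \to R\Gamma_{W,c}(U,F''),
\]
whose associated long exact cohomology sequence is exactly the claim.

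The main conceptual work has already been absorbed by the construction section; what remains here is a formal application of stable $\infty$-categorical yoga. The only residual subtlety is that the cofiber and $3\times 3$ reasoning is being carried out in $\mathscr{D}(\ZZ)$ rather than in $D(\ZZ)$, so the $\beta$-squares must commute coherently and not merely up to an unspecified homotopy; but this is again controlled by the same mapping-space calculation that produced the uniqueness of $\beta_F$, so no essentially new obstruction appears.
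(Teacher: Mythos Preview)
Your argument for the long exact sequence has a real gap. To apply the dual of Lemma~\ref{nine_stable} you need the $\beta$-ladder to commute as a diagram in the stable $\infty$-category $\mathscr{D}(\ZZ)$, not merely in the homotopy category $D(\ZZ)$. The calculation from the construction section that you invoke is a $\pi_0$ statement: it says $\Hom_{D(\ZZ)}(D_F,R\Gamma_c(U,G))$ is detected on $H^2$ and $H^3$. It does \emph{not} control $\pi_1$ of the mapping space, which is $\Hom_{D(\ZZ)}(D_F[1],R\Gamma_c(U,G))$; for a big-to-big morphism this contains a factor $\Ext^1(\Hom_U(F,\GG_m)^\dagger,H^1_c(U,G))$ which need not vanish when $H^1_c(U,G)$ has a free part. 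So the homotopies witnessing commutativity of the naturality squares are not unique, and there is no reason they assemble coherently into a morphism of fiber sequences. Your final sentence acknowledges the issue but waves it away; the obstruction is genuinely there. Indeed, the paper later treats the triangle $R\Gamma_{W,c}(U,F')\to R\Gamma_{W,c}(U,F)\to R\Gamma_{W,c}(U,F'')$ as explicitly \emph{non-distinguished} (see the proof of the multiplicativity theorem in section~\ref{Construction}), so you are claiming something stronger than what the paper establishes.

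The paper's route is different on both fronts. For well-definedness and functoriality it works in $D(\ZZ)$: a completing morphism in the triangulated category exists by the axioms, and uniqueness follows from the injection
\[
\Hom_{D(\ZZ)}(R\Gamma_{W,c}(U,F),R\Gamma_{W,c}(U,G))\hookrightarrow \Hom_{D(\ZZ)}(R\Gamma_c(U,F),R\Gamma_{W,c}(U,G)),
\]
proved by showing that $\Hom(D_F[1],R\Gamma_{W,c}(U,G))$ is uniquely divisible while $\Hom(R\Gamma_{W,c}(U,F),R\Gamma_{W,c}(U,G))$ is of finite type, so the image of the former in the latter is zero. For the long exact sequence the paper does not attempt to produce a distinguished triangle at all; instead it constructs the boundary map by the same rigidity argument and then checks exactness of the resulting long sequence by observing that $D_F\otimes^L\ZZ/n\ZZ=0$ gives $R\Gamma_{W,c}(U,F)\otimes\ZZ_p\simeq R\Gamma_c(U,F)^\wedge_p$, and derived $p$-completion of $R\Gamma_c(U,-)$ is exact. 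Exactness after tensoring with every $\ZZ_p$ suffices. This detour is precisely what replaces the $\infty$-categorical coherence you are missing.
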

\begin{proof}
Let us fix a choice of a cone of $\beta_F$ for every big or tiny sheaf $F$. Let $f:F\to G$ be a big-to-tiny morphism. We consider the beginning of a morphism distinguished triangle 
\[
\begin{tikzcd}
D_F \rar["\beta_F"] \dar["f_\ast"] & R\Gamma_c(U,F) \dar["f_\ast"] \rar & R\Gamma_{W,c}(U,F) \rar & D_F[1] \dar["f_\ast"] \\
D_G \rar["\beta_G"] & R\Gamma_c(U,G) \rar & R\Gamma_{W,c}(U,G) \rar & D_G[1]
\end{tikzcd}
\]
The groups $H^i(D_F)$ are $\QQ$-vector spaces and zero for $i\neq 2,3$ and the Weil-étale cohomology groups with compact support are of finite type and zero for $i\neq 0,1,2,3$. The exact sequence \eqref{verdier_exact_sequence} for $\Hom_{D(\ZZ)}$ gives
\begin{align*}
     0 &\to \prod_i \Ext^1(H^i(D_F),H^{i}_{W,c}(U,F)) \to \Hom_{D(\ZZ)}(D_F[1],R\Gamma_{W,c}(U,F))\\ &\to \prod_i \Hom(H^{i+1}(D_F),H^i_{W,c}(U,F)) \to 0
\end{align*}
\begin{align*}
     0 &\to \prod_i \Ext^1(H^i_{W,c}(U,F),H^{i-1}_{W,c}(U,G)) \to \Hom_{D(\ZZ)}(R\Gamma_{W,c}(U,F),R\Gamma_{W,c}(U,G))\\ &\to \prod_i \Hom(H^i_{W,c}(U,F),H^i_{W,c}(U,G)) \to 0
\end{align*}
so that $\Hom_{D(\ZZ)}(D_F[1],R\Gamma_{W,c}(U,G))$ is uniquely divisible and  $\Hom_{D(\ZZ)}(R\Gamma_{W,c}(U,F),R\Gamma_{W,c}(U,G))$ is of finite type. In the long exact sequence obtained by applying $\Hom_{D(\ZZ)}(-,R\Gamma_{W,c}(U,G))$ to the upper triangle, the image of $\Hom_{D(\ZZ)}(D_F[1],R\Gamma_{W,c}(U,G))$ in $\Hom_{D(\ZZ)}(R\Gamma_{W,c}(U,F),R\Gamma_{W,c}(U,G))$ must be uniquely divisible and of finite type, hence is trivial and we get an injection
\[
0 \to \Hom_{D(\ZZ)}(R\Gamma_{W,c}(U,F),R\Gamma_{W,c}(U,G)) \to \Hom_{D(\ZZ)}(R\Gamma_{c}(U,F),R\Gamma_{W,c}(U,G))
\]
Therefore a morphism completing the morphism of triangles, which exists by axiom, is uniquely determined by the requirement that it makes the square
\[
\begin{tikzcd}
R\Gamma_c(U,F) \dar["f_\ast"] \rar & R\Gamma_{W,c}(U,F) \dar[dotted] \\
R\Gamma_c(U,G) \rar & R\Gamma_{W,c}(U,G)
\end{tikzcd}
\]
commute. Thus there is a unique morphism $f_\ast:R\Gamma_{W,c}(U,F)\to R\Gamma_{W,c}(U,G)$ completing the diagram of distinguished triangles above, which gives at the same time the uniqueness and desired functoriality of $R\Gamma_{W,c}(U,-)$ (for the uniqueness, consider the case of two different choices of cone of $\beta_F$ and $\mathrm{id}: F\to F$).

Let us show the exactness. Let $0\to F \to G \to H \to 0$ be a big-to-tiny short exact sequence. We obtain a commutative diagram, where the two left columns and all rows are distinguished triangles.
\begin{equation}\label{exactitude_weil_etale}
   \begin{tikzcd}
D_F \rar["\beta_F"] \dar & R\Gamma_c(U,F) \rar \dar & R\Gamma_{W,c}(U,F) \rar \dar["\exists !"] & D_F[1] \dar\\
D_G \rar["\beta_G"] \dar & R\Gamma_c(U,G) \rar \dar & R\Gamma_{W,c}(U,G) \rar \dar["\exists !"] & D_G[1] \dar\\
D_H \rar["\beta_H"] \dar & R\Gamma_c(U,H) \rar \dar & R\Gamma_{W,c}(U,H) \rar \dar["\exists !"] & D_H[1] \dar\\
D_F[1] \rar["{\beta_F[1]}"]  & R\Gamma_c(U,F)[1] \rar  & R\Gamma_{W,c}(U,F)[1] \rar & D_F[2]
\end{tikzcd} 
\end{equation}

Actually we still have to construct the expected arrow $R\Gamma_{W,c}(U,H)\to R\Gamma_{W,c}(U,F)[1]$. We reason as before: the exact sequence \eqref{verdier_exact_sequence} gives
\begin{align*}
    0 \to & \Ext^1(\Ext^1_U(H,\GG_m)^\dagger,H^2_c(U,F))\times \Ext^1(\Hom_U(H,\GG_m)^\dagger,H^3_c(U,F)) \to
\Hom_{D(\ZZ)}(D_H,R\Gamma_c(U,F)[1])\\ & \to \prod_i \Hom(H^i(D_H),H^{i+1}_c(U,F)) \to 0
\end{align*}

Since $H^i_c(U,F)=\hat{H}^i_c(U,F)$ is torsion of cofinite type for $i\geq 2$ and zero for $i\geq 4$, we readily see that the left term is $0$, hence a canonical isomorphism
\[
\Hom_{D(\ZZ)}(D_H,R\Gamma_c(U,F)[1])\xrightarrow[H^2]{\simeq} \Hom(H^2(D_H),H^{3}_c(U,F))
\]
Thus in diagram \eqref{exactitude_weil_etale}, we can check that the bottom left square commutes on cohomology groups in degree $2$ ; but then the corresponding square is
\[
\begin{tikzcd}
\Ext^1_U(H,\GG_m)^\dagger \rar \dar & H^2_c(U,H)=\Ext^1_U(H,\GG_m)^D \dar\\
\Hom_U(F,\GG_m)^\dagger \rar & H^3_c(U,F)=\Hom_U(H,\GG_m)^D
\end{tikzcd}\]
which clearly commutes. Now the existence and uniqueness of the wanted arrow folllows by the same methods as for the others.

Let us now take the cohomology groups in the third row; this gives a long sequence
\[ \cdots \to H^i_{W,c}(U,F)\to H^i_{W,c}(U,G) \to H^i_{W,c}(U,H) \to H^{i+1}_{W,c}(U,F)\to \cdots\]
and we have to check that it is exact. We will use results from \cite[\href{https://stacks.math.columbia.edu/tag/0BKH}{Tag 0BKH}]{stacks-project} regarding derived completions. We have $D_F\otimes^L \ZZ/n\ZZ=0$. This gives an isomorphism
\[
R\Gamma_c(U,F)\otimes^L \ZZ/n\ZZ \simeq R\Gamma_{W,c}(U,F)\otimes^L \ZZ/n\ZZ
\]
hence by taking derived limits (\cite[\href{https://stacks.math.columbia.edu/tag/0922}{Tag 0922}]{stacks-project}) an isomorphism
\[
R\Gamma_c(U,F)^\wedge_p \simeq R\Gamma_{W,c}(U,F)^\wedge_p\simeq R\Gamma_{W,c}(U,F)\otimes \ZZ_p
\]
between the derived $p$-completions ; the last isomorphism holds because $R\Gamma_{W,c}(U,F)$ is a perfect complex and $\ZZ_p$ is flat (\cite[\href{https://stacks.math.columbia.edu/tag/0EEV}{Tag 0EEV}]{stacks-project}). Derived $p$-completion is an exact functor (\cite[\href{https://stacks.math.columbia.edu/tag/091V}{Tag 091V}]{stacks-project}) so the functor $R\Gamma_c(U,-)^\wedge_p$ is exact. Now we can just check that our long sequence is exact after tensoring with $\ZZ_p$ for each prime $p$, which follows from the exactness of $R\Gamma_{W,c}(U,-)\otimes \ZZ_p$.
\end{proof}
\section{The regulator pairing}\label{regulator_pairing}

\subsection{Construction}

We denote by $\otimes$ the derived tensor product in $\mathscr{D}(\ZZ)$ and $A_\RR:=A\otimes \RR$ for any abelian group or complex of abelian groups.

Taking inspiration from \cite[2.2]{Flach18}, we define in this section a pairing on the derived $\infty$-category $\mathscr{D}(U_{et})$ of étale sheaves on $U$
\[
R\Gamma_c(U,-)\otimes R\Hom_U(-,\GG_m)\to \RR[-1]
\]
with values in the derived $\infty$-category $\mathscr{D}(\ZZ)$, and we show that it is perfect after base change to $\RR$.  The pairing is functorial, meaning that the induced morphism
\[
R\Hom_U(-,\GG_m)\to R\Hom(R\Gamma_c(U,-),\RR[-1])
\]
is a natural transformation. When $U$ is a curve over $\FF_q$, we will be able to define it with value in $\QQ[-1]$\footnote{Though it will depend on $q$; the $\RR$-valued regulator doesn't depend on $q$}, and its base change to $\QQ$ will be perfect.

The derived $\infty$-category $\mathscr{D}(U_{et})$ is naturally enriched in the symmetric monoidal $\infty$-category $\mathscr{D}(\ZZ)$ via the complexes $R\Hom_U(-,-)$ ; hence for $F,G\in \mathscr{D}(U_{et})$ we have the $\Ext$ pairings\footnote{coming from the Yoneda pairing, noting that $R\Gamma(U,F)=R\Hom_U(\ZZ,F)$}
\[
   R\Gamma(U,F)\otimes R\Hom_U(F,G)\to R\Gamma(U,G)
\]
\[
    \prod_{v\in S}R\Gamma(K_v,F_{\eta_v})\otimes R\Hom_U(F,G) \to \prod_{v\in S}R\Gamma(K_v,F_{\eta_v})\otimes \prod_{v\in S}R\Hom_{K_v}(F_{\eta_v},G_{\eta_v})\to \prod_{v\in S} R\Gamma(K_v,G_{\eta_v}) 
\]
Since the derived tensor product is exact, we have a commutative diagram
\[\begin{tikzcd}
    R\Gamma_c(U,F)\otimes R\Hom_U(F,G) \rar & R\Gamma(U,F)\otimes R\Hom_U(F,G) \rar \dar & (\displaystyle\prod_{ v\in S}R\Gamma(K_v,F_{\eta_v}))\otimes R\Hom_U(F,G) \dar \\
    R\Gamma_c(U,G) \rar & R\Gamma(U,G) \rar & \displaystyle\prod_{v\in S}R\Gamma(K_v,G_{\eta_v})
\end{tikzcd}\]
where both rows are fiber sequences, hence an induced morphism
\[
R\Gamma_c(U,F)\otimes R\Hom_U(F,G)\to R\Gamma_c(U,G).
\]

Specialise for $G=\GG_m[0]$ ; we now wish to construct a map $R\Gamma_c(U,\GG_m)\to\RR[-1]$, resp. a factorization through $\QQ[-1]$ of that map when $U$ is a curve. We begin by computing the cohomology groups $H^i_c(U,\GG_m)$.

\begin{defi}
Define
\begin{align*}
    I_K' &= \{(\alpha_v)\in \prod_{v \text{~place de~}K} K_v^\times,~\alpha_v\in\mathcal{O}_{K_v}^\times\text{~for almost all~}v\}\\
\end{align*}
the \textit{henselian} idele group. Define the \textit{henselian} idele class group and \textit{henselian} $S$-idele class group
\begin{align*}
    C_K' &= I_K'/K^\times\\
    C_{K,S}' &= \mathrm{coker}(\prod_{v\in U_0}\mathcal{O}_{K_v}^\times\to I_K'\to C_K')
\end{align*}
where $U_0$ denotes the closed points of $U$.
\end{defi}

\begin{rmk}
	\begin{itemize}
	\item[]
		\item These differ from the usual definitions because we use for $K_v$ the fraction field of the henselization at $v$ when $v$ is non-archimedean.
		\item By the snake Lemma we have $C_{K,S}'\simeq (\bigoplus_{v\in U_0}\ZZ \oplus \prod_{v\in S}K_v^\times)/K^\times$. It follows that $C_{K,S}'=\mathrm{Pic}(C)$ when $U=C$ is a proper curve.
	\end{itemize}
\end{rmk}

\begin{prop}["Hilbert 90"]
We have $H_c^0(U,\GG_m)=0$, resp. $H^0_c(U,\GG_m)$ is finite when $U$ is affine resp. $U=C$ is a proper curve, and $H^1_c(U,\GG_m)=C_{K,S}'$.
For $i\geq 2$, \[
H^i_c(U,\GG_m)=\hat{H}^i_c(U,\GG_m)=\left\{\begin{array}{cc}
    \QQ/\ZZ & i=3 \\
    0 & i> 3
\end{array}\right.\]
\end{prop}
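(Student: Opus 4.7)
The plan is to apply the long exact sequence \eqref{long_sequence_compact_support} to $F = \GG_m$, compute the relevant étale cohomology inputs for low $i$, and use Proposition~\ref{compact_equals_tate_compact} together with Artin-Verdier duality for $i \geq 2$.

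For the low-degree inputs I use $H^0(U, \GG_m) = \mathcal{O}_U(U)^\times$, $H^1(U, \GG_m) = \mathrm{Pic}(U)$, and Hilbert~90, which gives $H^1(K_v, \GG_m) = 0$ at every place $v$ (for non-archimedean $v$ this is Hilbert 90 for the henselian local field $K_v$; for archimedean $v$ it is the classical cocycle calculation for $\CC^\times$ as a $\Gal(\CC/\RR)$-module, resp. trivial when $v$ is complex). These assemble into the five-term exact sequence
\[
0 \to H^0_c(U, \GG_m) \to \mathcal{O}_U(U)^\times \to \prod_{v \in S}K_v^\times \to H^1_c(U, \GG_m) \to \mathrm{Pic}(U) \to 0.
\]
If $U$ is affine, $S$ is nonempty and $\mathcal{O}_U(U)^\times \hookrightarrow K^\times \hookrightarrow K_v^\times$ for any $v \in S$, giving $H^0_c = 0$; if $U = C$ is proper, then $S = \emptyset$ and $H^0_c = H^0(C, \GG_m) = \FF_q^\times$ is finite. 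To identify $H^1_c(U, \GG_m)$ with $C'_{K,S}$, I apply the snake lemma to
\[
\begin{tikzcd}
0 \rar & \mathcal{O}_U(U)^\times \rar \dar & K^\times \rar \dar & K^\times/\mathcal{O}_U(U)^\times \rar \dar & 0\\
0 \rar & \displaystyle\prod_{v \in S}K_v^\times \rar & \displaystyle\bigoplus_{v\in U_0}\ZZ \oplus \prod_{v \in S}K_v^\times \rar & \displaystyle\bigoplus_{v\in U_0}\ZZ \rar & 0
\end{tikzcd}
\]
with the natural inclusion/divisor vertical maps. Regularity of $U$ kills the kernel of the rightmost vertical (any element of $K^\times$ with zero divisor on $U$ is a unit), so the snake sequence collapses to
\[
0 \to \mathrm{coker}\bigl(\mathcal{O}_U(U)^\times \to \prod_{v \in S}K_v^\times\bigr) \to C'_{K,S} \to \mathrm{Pic}(U) \to 0,
\]
which matches the tail of the five-term sequence above; when $S = \emptyset$ this specialises to $H^1_c(C, \GG_m) = \mathrm{Pic}(C) = C'_{K, \emptyset}$.

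For $i \geq 2$, Proposition~\ref{compact_equals_tate_compact} gives $H^i_c = \hat{H}^i_c$, and Artin-Verdier duality (Theorem~\ref{AVduality}) yields $\hat{H}^i_c(U, \GG_m) \simeq \Ext^{3-i}_U(\GG_m, \GG_m)^D$ for $i = 2, 3$, together with $\hat{H}^{\geq 4}_c = 0$. On the regular scheme $U$ one has $R\sheafhom(\GG_m, \GG_m) \simeq \underline{\ZZ}$ (the endomorphism sheaf is $\ZZ$ and the higher local sheaf-Exts vanish), so $\Ext^i_U(\GG_m, \GG_m) = H^i(U, \ZZ)$; then $\Hom_U(\GG_m, \GG_m) = \ZZ$ gives $\hat{H}^3_c = \QQ/\ZZ$, and $H^1(U, \ZZ) = \Hom_{cont}(\pi_1(U), \ZZ) = 0$ (the étale fundamental group being profinite) gives $\hat{H}^2_c = 0$. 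The main technical obstacle is upgrading the snake-lemma output to a canonical identification $H^1_c(U, \GG_m) \simeq C'_{K,S}$ rather than merely isomorphic short exact sequences; by contrast, the higher-degree part is a direct consequence of duality once the Ext vanishing $\Ext^1_U(\GG_m, \GG_m) = 0$ is in hand.
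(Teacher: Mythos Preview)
Your treatment of $i = 0$ matches the paper's. The two substantive issues are in degrees $\geq 2$ and $1$.

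For $i \geq 2$, you invoke Theorem~\ref{AVduality} with $F = \GG_m$, but that theorem is stated only for $\ZZ$-constructible sheaves, and $\GG_m$ is not one. More fundamentally, the identification $\hat{H}^3_c(U,\GG_m)\simeq \QQ/\ZZ$ (the global trace map) is the class-field-theoretic \emph{input} to Artin--Verdier duality rather than a consequence of it; deducing it from the duality is circular. The paper instead cites the direct computation in \cite{ADT}, which ultimately unwinds to global class field theory. Your claim $R\sheafhom(\GG_m,\GG_m)\simeq\ZZ$ is correct on regular $U$, but it does not rescue the argument without an independent duality statement for $\GG_m$.

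For $i = 1$, your approach genuinely differs from the paper's, and you have correctly identified its weak point: the snake lemma only produces a short exact sequence with the same outer terms as the one coming from \eqref{long_sequence_compact_support}, and two extensions of $\mathrm{Pic}(U)$ by the same group need not be isomorphic. The paper avoids this extension-matching problem by passing through Zariski cohomology. Hilbert~90 in the form $H^1_{zar}(U,\GG_m)=H^1_{et}(U,\GG_m)=\mathrm{Pic}(U)$ together with the five lemma gives $H^1_{zar,c}(U,\GG_m)\xrightarrow{\sim}H^1_c(U,\GG_m)$; then the Zariski divisor sequence $0\to\GG_m\to g_\ast\GG_m\to\bigoplus_{v\in U_0}(i_v)_\ast\ZZ\to 0$ and the exactness of $g_\ast$ on the Zariski site exhibit $R\Gamma_{zar,c}(U,\GG_m)$ as the explicit two-term complex $\bigl[K^\times\to\bigoplus_{v\in U_0}\ZZ\oplus\prod_{v\in S}K_v^\times\bigr]$, whose $H^1$ is $C'_{K,S}$ on the nose. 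This gives a canonical identification, which your argument does not.
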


\begin{proof}
For $i=0$ this comes from the defining long exact sequence
\[
0 \to H^0_c(U,\GG_m)\to \mathcal{O}_U(U)^\times \to \prod_{v\in S} K_v^\times \to \cdots
\]
and for $U=C$ a proper regular curve of characteristic $p>0$, $\mathcal{O}_C(C)$ is a finite extension of $\FF_p$. We already saw in Proposition \ref{compact_equals_tate_compact} that $H^i_c(U,-)=\hat{H}^i_c(U,-)$ for $i>1$, thus for $i>1$ see \cite[III.2.6]{ADT}.

Let us treat the case $i=1$. Define a complex $R\Gamma_{zar,c}$ similarly to $R\Gamma_c$, but using Zariski cohomology on $U$ and $\Spec(K_v)$ instead of étale cohomology. Let $\varepsilon$ be the morphism of sites $U_{zar}\to U_{et}$. We denote by $\varepsilon_\ast:F\mapsto F\circ \varepsilon$ the direct image functor and $\varepsilon^\ast$ its left adjoint. We have $\varepsilon_\ast (\GG_m)_{U_{et}}= (\GG_m)_{U_{zar}}$, thus $\tau^{\leq 0} R\varepsilon_\ast \GG_m=\GG_m$ and we get a morphism
\[
R\Gamma_{zar}(U,\GG_m)=R\Gamma_{zar}(U,\tau^{\leq 0} R\varepsilon_\ast\GG_m)\to R\Gamma_{zar}(U,R\varepsilon_\ast\GG_m)=R\Gamma(U,\GG_m).
\]
This combined with the similar result on $\Spec(K_v)$ induce a canonical morphism
\[
R\Gamma_{zar,c}(U,\GG_m)\to R\Gamma_c(U,\GG_m)
\]
This induces a commutative diagram in cohomology
\[
\begin{tikzcd}
0 \rar & \mathcal{O}_U(U)^\times \rar \dar[equal] & \prod_{v\in S} K_{v}^{\times} \rar \dar[equal] & H^1_{zar,c}(U,\GG_m) \rar \dar & H^1_{zar}(U,\GG_m)=\mathrm{Pic}(U) \rar \dar[equal] & 0\\
0 \rar & \mathcal{O}_U(U)^\times \rar & \prod_{v\in S}K_v^\times \rar & H^1_{c}(U,\GG_m) \rar & H^1(U,\GG_m)=\mathrm{Pic}(U) \rar & 0
\end{tikzcd}
\]
where the identifications on the right follows by Hilbert's theorem 90. Thus by the five lemma
\[
H^1_{zar,c}(U,\GG_m)\xrightarrow{\simeq}H^1_c(U,\GG_m)
\]
On the Zariski site of $U$, we have a short exact sequence
\[
0 \to \GG_m \to g_\ast \GG_m \to \bigoplus_{v\in U_0}(i_v)_\ast\ZZ \to 0
\]
where $g:\Spec(K)\hookrightarrow U$ is the inclusion of the generic point and $i_v$ is the inclusion of a closed point $v\in U$. This gives a distinguished triangle
\[
R\Gamma_{zar,c}(U,\GG_m) \to R\Gamma_{zar,c}(U,g_\ast \GG_m) \to \bigoplus_{v\in U_0}\ZZ \to
\]
For $q>0$, the sheaf $R^q g_\ast F$ is the sheaf associated to
\[
V\mapsto H^q(V\times_U\Spec(K),F)=H^q(\Spec(K),F)=0
\]
hence $g_\ast =Rg_\ast$ is exact and
\[
R\Gamma_{zar}(U,g_\ast \GG_m)=R\Gamma_{zar}(\Spec(K),\GG_m)=K^\times[0].
\]
Thus there is an identification
\[
R\Gamma_{zar,c}(U,g_\ast\GG_m)\simeq \mathrm{fib}\left(K^\times[0] \to \prod_{v\in S}K_v^\times[0]\right)=\left[K^\times \to \prod_{v\in S}K_v^\times\right]
\]
with $K^\times$ in degree zero. We find
\[
R\Gamma_{zar,c}(U,\GG_m)\simeq \mathrm{fib}\left(\left[K^\times \to \prod_{v\in S}K_v^\times\right] \to \bigoplus_{v\in U_0}\ZZ[0]\right)=\left[K^\times \to \bigoplus_{v\in U_0}\ZZ\oplus \prod_{v\in S}K_v^\times\right]
\]
hence the result.
\end{proof}

The map $\sum_v \log|\cdot|_v$ is well-defined on the henselian idele group $I_K'$ by the product formula and induces a map on the henselian $S$-idele class group $tr : C_{K,S}' \to \RR$. For $v\notin S$, we have in particular on the factor $\ZZ$ associated to $v$:
\begin{equation}\label{tr_sur_Z}
    tr(1_v)=-\log(N(v))
\end{equation}
where $N(v)=[\kappa(v)]$ is the cardinality of the residue field at $v$; this follows from the conventions chosen for the product formula. If $U$ is a curve over $\FF_q$, then $tr$ factors through $\ZZ$ as $C_{K,S}' \xrightarrow{tr_q} \ZZ \xrightarrow{\log(q)} \RR$ with $tr_q$ the map induced by $\sum_v \log_q|\cdot|_v$. Then for $v\notin S$, we have $tr_q(1_v)=-[\kappa(v):\FF_q]$. In particular, if $U=C$ is a proper curve, then $tr_q=\mathrm{deg}$ is the degree map $\mathrm{Pic}(C)\to \ZZ$.

We can consider the composite
\[
R\Gamma_c(U,\GG_m)\to \tau^{\geq 1} R\Gamma_c(U,\GG_m)_{\RR} = H^1_c(U,\GG_m)_{\RR}[-1]=(C_{K,S}')_\RR[-1]\xrightarrow{tr}\RR[-1]
\]
When $U$ is a curve over $\FF_q$, we can also consider the composite
\[
R\Gamma_c(U,\GG_m)\to \tau^{\geq 1} R\Gamma_c(U,\GG_m)_{\QQ} = H^1_c(U,\GG_m)_{\QQ}[-1]=(C_{K,S}')_\QQ[-1]\xrightarrow{tr_q\otimes\QQ}\QQ[-1],
\]
whose composite with $\QQ \xrightarrow{1\mapsto \log(q)} \RR$ is the previous map\footnote{Note that we have to tensor with $\QQ$ to supress the torsion coming from $H^3_c(U,\GG_m)=\QQ/\ZZ$.}

\begin{defi}
The regulator pairing is the pairing
\[
R\Gamma_c(U,F)\otimes R\Hom_U(F,\GG_m)\to R\Gamma_c(U,\GG_m)\to\RR[-1]
\]
induced by $tr$.

If $U$ is a curve over $\FF_q$, the rational regulator pairing is the pairing
\[
R\Gamma_c(U,F)\otimes R\Hom_U(F,\GG_m)\to R\Gamma_c(U,\GG_m)\to \QQ[-1]
\]
induced by $tr_q$.
\end{defi}
Note that the rational regulator pairing depends on the choice of the base field.

\subsection{Perfectness of the regulator pairing after base change to \texorpdfstring{$\RR$}{R}}

We call a pairing of perfect complexes $A\otimes B\to C$ perfect if the natural morphism $A\to R\Hom(B,C)$ is an isomorphism.

\begin{thm}\label{regulator_perfect}
The regulator pairing 
\[
R\Gamma_c(U,F)\otimes R\Hom_U(F,\GG_m)\to R\Gamma_c(U,\GG_m)\to\RR[-1]
\]
is perfect after base change to $\RR$ for every $\ZZ$-constructible sheaf $F$. More generally, the map
\[
R\Hom_U(F,\GG_m)_\RR \to R\Hom(R\Gamma_c(U,F)_\RR,\RR[-1])
\]
is an isomorphism for every $F\in \mathscr{D}(U_{et})$. When U is a curve over $\FF_q$, the theorem holds over $\QQ$.
\end{thm}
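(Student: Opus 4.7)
The plan is to prove the theorem by dévissage on $F$, as announced in the introduction. Both sides of the comparison map define exact contravariant functors $\mathscr{D}(U_{et}) \to \mathscr{D}(\RR)$ that turn colimits into limits, and the map is a natural transformation, so the full subcategory of $F$ for which it is an isomorphism is triangulated, closed under retracts, and stable under filtered colimits. Every object of $\mathscr{D}(U_{et})$ is a filtered colimit of (bounded complexes of) $\ZZ$-constructible sheaves, so it suffices to establish the statement for $\ZZ$-constructible $F$.

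For such $F$ I apply the dévissage procedure of Section~\ref{etale_cohomology_compact_support}: the exact sequence $0\to F_{tor}\to F\to F/F_{tor}\to 0$, Artin induction applied to the generic stalk of $F/F_{tor}$, and the open-closed decomposition $0\to j_!(F|_V)\to F\to i_\ast(F|_Z)\to 0$ reduce the problem to: (i) constructible sheaves on some regular, one-dimensional scheme of finite type over $\ZZ$; (ii) skyscraper sheaves $(i_v)_\ast M$ at a closed point $v$; and (iii) sheaves of the form $j_!\pi_\ast \ZZ$ with $\pi: U'\to V$ the normalization of an open $V\subset U$ in a finite Galois extension of $K$. The isomorphisms $R\Gamma_c(U,j_!(-))\simeq R\Gamma_c(V,-)$ and $R\Gamma_c(U,\pi_\ast(-))\simeq R\Gamma_c(U',-)$ on the right-hand side, together with the matching adjoint identifications on $R\Hom_U(-,\GG_m)$, show that the comparison map is compatible with $j_!$ and $\pi_\ast$; this further reduces the problem to proving the theorem for constructible sheaves and for the constant sheaf $\ZZ$ on an arbitrary regular, one-dimensional scheme of finite type over $\ZZ$.

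The constructible case is immediate: by Artin-Verdier duality (Theorem~\ref{AVduality}) and Proposition~\ref{compact_support_finite_type}, both $\Ext^i_U(F,\GG_m)$ and $H^i_c(U,F)$ are torsion, so each side vanishes after $\otimes\RR$ and the comparison map is trivially an isomorphism. The substantive case is $F=\ZZ$, where one computes $R\Gamma(U,\GG_m)_\RR$ (with cohomology $\mathcal{O}_U(U)^\times_\RR$ in degree $0$ and $\mathrm{Pic}(U)_\RR$ in degree $1$; the latter vanishes when $U$ is affine and has rank one when $U=C$ is a proper curve, which is precisely why the rational regulator factoring through $\QQ[-1]$ is needed) and $R\Gamma_c(U,\ZZ)_\RR$ (from the defining triangle, combined with the comparison complex $T$ introduced in Section~\ref{etale_cohomology_compact_support}). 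The regulator pairing, unwound through the Yoneda product $R\Gamma_c(U,\ZZ)\otimes R\Gamma(U,\GG_m)\to R\Gamma_c(U,\GG_m)$ followed by the trace $tr$ (respectively $tr_q$), should be identified with the classical logarithmic pairing $\alpha\mapsto (\log|\alpha|_v)_{v\in S}$ (respectively with the degree map on the $\mathrm{Pic}$ component for a proper curve), and perfectness after $\otimes\RR$ then becomes Dirichlet's $S$-unit theorem, together with the finiteness of $\mathrm{Pic}^0(C)$ and surjectivity of the degree in the function field setting.

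The main obstacle I anticipate is this last identification: tracing carefully through the definition of the regulator, via the Yoneda pairing and the trace $tr$ constructed on $H^1_c(U,\GG_m)=C'_{K,S}$ using \eqref{tr_sur_Z}, and verifying that at each place $v\in S$ the contribution reproduces the classical local logarithm $\log|\cdot|_v$. Once this explicit identification is in hand, perfectness after $\otimes\RR$ is classical, and all the dévissage steps above plug in formally.
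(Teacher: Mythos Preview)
Your overall strategy matches the paper's, but there is a genuine gap in your reduction. After listing the three output cases of dévissage, you assert that the compatibility with $j_!$ and $\pi_\ast$ ``further reduces the problem to proving the theorem for constructible sheaves and for the constant sheaf $\ZZ$ on an arbitrary regular, one-dimensional scheme of finite type over $\ZZ$.'' But case (ii), the skyscraper sheaf $(i_v)_\ast M$ with $M$ a non-torsion $G_v$-module, is neither constructible nor of the form $j_!\pi_\ast\ZZ$ with a one-dimensional source. Even after a second Artin induction at the point $v$, you are left with sheaves $(i_{v'})_\ast\ZZ$ for finite extensions $v'\to v$; these live on a zero-dimensional scheme, and your discussion of the case $F=\ZZ$ (via $\mathcal{O}_U(U)^\times$, $\mathrm{Pic}(U)$, and Dirichlet's theorem) simply does not apply there. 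The paper handles this case separately: using $Ri_v^!\GG_m\simeq\ZZ[-1]$ one identifies the regulator pairing for $(i_v)_\ast M$ with the pairing $R\Gamma(v,M)\otimes R\Hom_v(M,\ZZ)\to R\Gamma(v,\ZZ)\to\QQ$ (up to the nonzero scalar $-\log N(v)$), and then one must still prove that $M^{G_v}_\QQ\times\Hom_{G_v}(M,\ZZ)_\QQ\to\QQ$ is perfect, which is a short but necessary argument using that the projection $M^G\to M_G$ becomes an isomorphism rationally.

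A secondary point: the compatibility of the regulator with $\pi_\ast$ is not purely formal. The identification $R\Hom_U(\pi_\ast F,\GG_m)\simeq R\Hom_V(F,\GG_m)$ goes through the norm map $Nm:\pi_\ast\GG_m\to\GG_m$, and you must check that the resulting square with the trace maps $tr$ on $C'_{K,S}$ and $C'_{L,S'}$ commutes. This boils down to the identity $\sum_{v\in S}\log\lvert\prod_{w\mid v}N_{L_w/K_v}(\alpha_w)\rvert_v=\sum_{w\in S'}\log\lvert\alpha_w\rvert_w$, which is elementary but should be stated rather than hidden inside ``matching adjoint identifications.''
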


\begin{rmk}
\begin{itemize}
	\item[]
	\item For $U=\Spec(\mathcal{O}_K)$ and $F=\ZZ$ this is conjecture $\mathbf{B}(U,0)$ of \cite{Flach18} (note that $d=1$), which is shown in \textit{ibid.}, section 2.2 to be equivalent to Beilinson's conjecture. The latter is known for $U=\Spec(\mathcal{O}_K)$.
	\item For $F\in \mathscr{D}(U_{et})$ not $\ZZ$-constructible, the vector spaces involved may not be finite dimensional in which case the dual map $R\Gamma_c(U,F)_\RR\to R\Hom(R\Hom_U(F,\GG_m)_\RR,\RR[-1])$ will not be an isomorphism.
\end{itemize}

\end{rmk}

\begin{proof}
If $U$ is a curve over $\FF_q$, the rational regulator pairing induces the regulator pairing up to a non-zero factor $\log(q)$ ; hence for curves we will only prove the perfectness of the rational regulator pairing after base change to $\QQ$.

We have to show that $R\Hom_U(F,\GG_m)_\RR \to R\Hom(R\Gamma_c(U,F),\RR[-1])$ is an isomorphism (resp. $R\Hom_U(F,\GG_m)_\QQ \to R\Hom(R\Gamma_c(U,F),\QQ[-1])$ if $U$ is a curve). This is a natural transformation that behaves well with fiber sequences and filtered colimits, hence we first reduce to the case of bounded complexes, then by filtering with the truncations to sheaves, then to $\ZZ$-constructible sheaves. By the dévissage argument, we now have to check that
\begin{enumerate}
    \item the regulator pairing behaves well with the pushforward $\pi_\ast$ coming from a finite generically Galois morphism $\pi:V\to U$
    \item the regulator pairing behaves well with the extension by zero $j_!$ coming from an open subscheme $j:V\hookrightarrow U$
    \item the regulator pairing for a $\ZZ$-constructible sheaf supported on a closed point identifies with a pairing in Galois cohomology that is pefect after base change to $\QQ$
    \item the theorem is true for a constructible sheaf
    \item the theorem is true for the constant sheaf $\ZZ$
\end{enumerate}

\paragraph{Case of the pushforward by a finite generically Galois cover}\label{cas_loc_const}

Let $\pi :V\to U$ be the normalization of $U$ in a finite Galois extension $L/K$ and let $F$ be a $\ZZ$-constructible sheaf on $V$. Then $V=\Spec(\mathcal{O}_{L,S'})$ or $V=C'\backslash S'$ where $S'$ are the places of $L$ above those of $S$ and $C'$ is the connected smooth proper curve with function field $L$ in the function field case. Denote $g_v:\Spec(K_v)\to U$ (resp. $g_w:\Spec(L_w)\to V$) and $\pi_w : \Spec(L_w) \to \Spec(K_v)$ the natural morphism for $v\in S$, $w\in S'$. We remark that (\cite[3.6]{Tran16}) $g_v^\ast \pi_\ast =\prod_{w|v}(\pi_w)_\ast g_w^\ast$.
]
On the one hand, prop. \ref{milne_II.2.3}.e gives a natural isomorphism $R\Gamma_c(V,-)=R\Gamma_c(U,\pi_\ast -)$. On the other hand, the norm induces a map $Nm:\pi_\ast \GG_m\to \GG_m$ such that the composite 
\[
   N : R\Hom_V(F,\GG_m)\xrightarrow{\pi_\ast}R\Hom_U(\pi_\ast F,\pi_\ast \GG_m)\xrightarrow{Nm_\ast}R\Hom_U(\pi_\ast F,\GG_m)
\]
is an isomorphism (\cite[II.3.9]{ADT}). The following diagrams are commutative by functoriality of the Yoneda pairing:
\[
\begin{tikzcd}
R\Hom_V(\ZZ,F)\otimes R\Hom_V(F,\GG_m) \dar \rar & R\Gamma(V,\GG_m)=R\Hom_V(\ZZ,\GG_m) \dar\\
R\Hom_U(\ZZ,\pi_\ast F)\otimes R\Hom_U(\pi_\ast F,\pi_\ast \GG_m) \dar["\mathrm{id}\otimes Nm_\ast"] \rar & R\Gamma(U,\pi_\ast \GG_m) \dar["Nm_\ast"]\\
R\Gamma(U,\pi_\ast F)\otimes R\Hom_U(\pi_\ast F, \GG_m)\rar & R\Gamma(U,\GG_m)
\end{tikzcd}
\]
\medskip
\[
\begin{tikzcd}
R\Gamma(L_w,F_{\eta_w})\otimes R\Hom_{L_w}(F_{\eta_w},\GG_m) \dar \rar & R\Gamma(L_w,\GG_m) \dar\\
R\Gamma(K_v,(\pi_w)_\ast F_{\eta_w})\otimes R\Hom_{K_v}((\pi_w)_\ast F_{\eta_w},(\pi_w)_\ast \GG_m) \dar["\mathrm{id}\otimes (N_{L_w/K_v})_\ast"] \rar & R\Gamma(K_v,(\pi_w)_\ast \GG_m) \dar["(N_{L_w/K_v})_\ast"]\\
R\Gamma(K_v,(N_{L_w/K_v})_\ast F_{\eta_w})\otimes R\Hom_{K_v}((N_{L_w/K_v})_\ast F_{\eta_w}, \GG_m)\rar & R\Gamma(K_v,\GG_m)
\end{tikzcd}
\]
\medskip
\[
\begin{tikzcd}
R\Hom_V(F,\GG_m) \dar["Nm_\ast\circ\pi_\ast"] \rar["\prod_{w|v} j_w^\ast"] & \prod_{w|v}R\Hom_{L_w}(F_{\eta_w},\GG_m) \dar["\prod_{w|v}(N_{L_w/K_v})_\ast\circ(\pi_w)_\ast"] \\
R\Hom_U(\pi_\ast F,\GG_m) \rar["j_v^\ast"] & \prod_{w|v} R\Hom_{K_v}((\pi_w)_\ast F_{\eta_w},\GG_m)
\end{tikzcd}
\]
We obtain a diagram:
\[
\begin{tikzcd}
R\Gamma_c(V,F)\otimes R\Hom_V(F,\GG_m) \rar \dar["\simeq"] & R\Gamma_c(V,\GG_m) \rar \dar & (C_{L,S'})_\RR[-1] \dar["Nm_\ast"] \rar["tr"] & \RR[-1] \dar[equal]\\
R\Gamma_c(U,\pi_\ast F)\otimes R\Hom_U(\pi_\ast F,\GG_m) \rar & R\Gamma_c(U,\GG_m) \rar & (C_{K,S})_\RR[-1] \rar["tr"] & \RR[-1]
\end{tikzcd}
\]
The left hand side is commutative by the previous diagrams. The long exact sequence \eqref{long_sequence_compact_support} gives
\[
0 \to \mathcal{O}_U(U)^\times \to \prod_{v\in S} K_v^\times \to C_{K,S}' \to \mathrm{Pic}(U)
\]
Thus the map $tr$ factors through 
\[
C'_{K,S}\otimes \RR = \left((\prod_{v\in S}K_v^\times)/\mathcal{O}_U(U)^\times \right)\otimes \RR
\]
on which it is given by
\[
(\alpha_v) \mapsto \sum_{v\in S}\log(|\alpha_v|_v),
\] so the right hand side is commutative by the following computation:
\[
\sum_{v\in S} \log|\prod_{w|v}N_{L_w/K_v}(\alpha_w)|_v=\sum_{w\in S'} \log|N_{L_w/K_v}(\alpha_w)|_v=\sum_{w\in S'} \log|\alpha_w|_w
\]
We have proved that the theorem is true for $F$ if and only if it is true for $\pi_\ast F$ ; the reasoning is exactly the same for the rational regulator pairing when $U$ is a curve.

\paragraph{Case of the extension by zero along an open inclusion}\label{case_extension_zero}
Let $j:V\hookrightarrow U$ be an open subscheme and let $F$ be a $\ZZ$-constructible sheaf on $V$. Write $V=\Spec{\mathcal{O}_{K,S'}}$ or $V=C\backslash S'$. Using prop. \ref{milne_II.2.3}.d and the identification $j^\ast \GG_m=\GG_m$, we obtain a commutative diagram
\[
\begin{tikzcd}
R\Gamma_c(V,F)\otimes R\Hom_V(F,\GG_m) \rar \dar["\simeq"] & R\Gamma_c(V,\GG_m) \rar \dar & (C_{K,S'})_\RR[-1] \dar \rar["tr"] & \RR[-1] \dar[equal]\\
R\Gamma_c(U,j_! F)\otimes R\Hom_U(j_! F,\GG_m) \rar & R\Gamma_c(U,\GG_m) \rar & (C_{K,S})_\RR[-1] \rar["tr"] & \RR[-1]
\end{tikzcd}
\]
which shows that the theorem is true for $F$ if and only if it is true for $j_!F$. The reasoning is the same for the rational regulator pairing when $U$ is a curve.

\paragraph{Case of a sheaf supported on a closed point}\label{case_sheaf_closed_point}

Consider a $\ZZ$-constructible sheaf supported on a closed point $v$. Let $G_v=\mathrm{Gal}(\kappa(v)^{sep}/\kappa(v))$. The sheaf is of the form $i_\ast M$ where $M$ is a discrete $G_v$-module of finite type and $i:v\hookrightarrow U$ is the closed inclusion. The functor $i_\ast$ has a right adjoint in the derived category $Ri^!$ such that $i_\ast Ri^!\simeq \mathrm{id}$, and one has an identification (\cite[section 1, eq. 1.2]{Mazur73})
\begin{equation}\label{right_adjoint_closed_immersion}
Ri^!\GG_m\simeq \ZZ[-1]
\end{equation}
Prop. \ref{milne_II.2.3}.c gives a natural isomorphism $R\Gamma_c(U,i_\ast M)=R\Gamma(v,M)$ and we obtain morphisms
\begin{align*}
& R\Gamma(v,\ZZ[-1]) = R\Gamma_c(U,i_\ast \ZZ[-1])=R\Gamma_c(U,i_\ast Ri^!\GG_m)\xrightarrow{\eta_\ast}R\Gamma_c(U,\GG_m)\\
& R\Hom_{v}(M,\ZZ[-1])=R\Hom_{v}(M,Ri^!\GG_m)\xrightarrow[\simeq]{\eta_\ast\circ i_\ast}R\Hom_U(i_\ast M,\GG_m)
\end{align*}
where $\eta:i_\ast Ri^!\GG_m\to \GG_m$ is the counit of the adjunction. The functoriality of the Yoneda pairing then implies that the following diagram is commutative:
\[
\begin{tikzcd}
R\Gamma(v,M)\otimes R\Hom_v(M,\ZZ[-1]) \rar \dar["\simeq"] & R\Gamma(v,\ZZ[-1])\dar \\
R\Gamma_c(U,i_\ast M)\otimes R\Hom_U(i_\ast M,\GG_m) \rar & R\Gamma_c(U,\GG_m)
\end{tikzcd}
\]
The Galois cohomology groups of $G_v\simeq\hat{\ZZ}$ with coefficients in $\ZZ$ are given by
\[
H^i(\hat{\ZZ},\ZZ)=\left\{\begin{array}{ccr}
\ZZ & i=0\\
\Hom_{cont}(\hat{\ZZ},\ZZ)=0 &i=1\\
\Hom_{cont}(\hat{\ZZ},\QQ/\ZZ)=\QQ/\ZZ & i=2 &\\
0 & i>2
\end{array}\right.
\]
hence
\[
\tau^{\geq 1} R\Gamma(G_v,\ZZ[-1])_\QQ \simeq H^0(G_v,\ZZ)_\QQ[-1]=\QQ[-1]
\]
The map $\eta_\ast : \ZZ=H^0(G_v,\ZZ)=H^0_c(U,i_\ast \ZZ)\to H^1_c(U,\GG_m)=C_{K,S}'$ sends $\ZZ$ to the factor indexed by $v$ in $C_{K,S}'$. Following remark \ref{tr_sur_Z}, we get a commutative diagram
\begin{equation}\label{regulateur_point}
\begin{tikzcd}[cramped,sep=small]
R\Gamma(v,M)\otimes R\Hom_v(M,\ZZ[-1]) \rar \dar["\simeq"] & R\Gamma(v,\ZZ[-1])\dar \rar & \tau^{\geq 1} R\Gamma(v,\ZZ[-1])_\QQ \rar["="] \dar & \QQ[-1] \dar["-\log(N(v))"]\\
R\Gamma_c(U,i_\ast M)\otimes R\Hom_U(i_\ast M,\GG_m) \rar & R\Gamma_c(U,\GG_m) \rar &\tau^{\geq 1}R\Gamma_c(U,\GG_m)_\RR=(C_{K,S}')_\RR[-1] \rar["tr"] & \RR[-1]
\end{tikzcd}
\end{equation}

\paragraph{Case of Galois cohomology of a finite field}\label{cas_cohomologie_galoisienne_corps_fini}

Since $-\log(N(v))\neq 0$, we are reduced by the previous section to showing that
\[
R\Gamma(v,M)\otimes R\Hom_v(M,\ZZ)\to R\Gamma(v,\ZZ)\to \QQ[0]
\]
is perfect after base change to $\QQ$ for any discrete $G_v$-module $M$ of finite type as an abelian group; this will imply that it is also perfect after base change to $\RR$. We show it on cohomology groups. Galois cohomology is torsion and Artin-Verdier duality gives that $\Ext^i_v(M,\ZZ)=\Ext^{i+1}_U(i_\ast M,\GG_m)$ (equation \eqref{right_adjoint_closed_immersion}) is finite for $i\geq 1$, so the pairing is trivially perfect in degree $i\neq 0$ after base change to $\QQ$. Thus it suffices to show the result in degree $0$, i.e. to show that the pairing
\[
M^{G_v}\times \Hom_{G_v}(M,\ZZ)\to \QQ
\]
is perfect after base change to $\QQ$. Let $H$ be an open subgroup acting trivially on $M$ ; then $M^{G_v}=(M)^{G_v/H}$ and $\Hom_{G_v}(M,\ZZ)=\Hom_{G_v/H}(M,\ZZ)$, i.e. we reduce to the case of a cyclic group $G:=G_v/H$ acting of the finite type abelian group $M$. We have furthermore $\Hom_G(M,\ZZ)=\Hom(M_G,\ZZ)$ (where $M_G$ denotes coinvariants). Let $F$ denote a generator of $G$. The exact sequence
\[
0 \to M^G \to M \xrightarrow{F-1} M \to M_G\to 0,
\]
gives identifications $(M^G)_\QQ=(M_\QQ)^G$ and $(M_G)_\QQ=(M_\QQ)_G$ by tensoring with $\QQ$, and also gives that those vector spaces have the same dimension. Let $N:M_G\to M^G$ denote the norm and $\pi:M^G\to M \to M_G$ the restriction of the canonical projection to $M^G$. We notice that $N \circ \pi=|G|\mathrm{id}$, so $\pi_\QQ$ is an isomorphism. In the pairing
\[
M_\QQ^G\times \Hom(M_{G,\QQ},\QQ)\to \QQ,
\] the left hand terms have same dimension, so it suffices to prove that the left kernel is trivial, which follows immediately for surjectivity of $\pi_\QQ$.

\paragraph{Case of a constructible sheaf}\label{cas_constructible}
Let $F$ be a constructible sheaf on $U$. Then cohomology with compact support differs from Tate cohomology with compact support by finite groups. Artin-Verdier duality thus shows that all cohomology groups $H^i_c(U,F)$ and $\Ext^i_U(F,\GG_m)$ are finite, hence the pairing is trivial after base change to $\RR$ (or $\QQ$ when $U$ is a curve), so it is perfect.

\paragraph{Case of the constant sheaf $\ZZ$}\label{case_Z}

Consider the constant sheaf $\ZZ$ on $U$. If $U$ is a curve over $\FF_q$, the previous discussion along with the dévissage argument allows us to suppose that $U=C$ is a proper curve. We will show perfectness of the regulator pairing on cohomology groups. Since $U$ is noetherian and normal, $H^q(U,\ZZ)$ is torsion for $q>0$ (\cite[II.2.10]{ADT}) and $\pi_1^{SGA3}(U)=\pi_1^{SGA1}(U)$ is profinite, hence $H^1(U,\ZZ)=\Hom_{cont}(\pi_1^{et}(U),\ZZ)=0$. The same argument holds for $\ZZ$ on $\Spec(K_v)$ for $v \in S$. Thus:

\begin{itemize}
	\item If $U=\Spec(\mathcal{O}_{K,S})$, we have the exact sequence \eqref{long_sequence_compact_support}
	\[
	0\to H^0_c(U,\ZZ)\to \ZZ \to\prod_{v\in S}\ZZ \to H^1_c(U,\ZZ) \to 0
	\]
	and we obtain
	\begin{equation}\label{cohomomology_compact_Z}
	H^0_c(U,\ZZ)=0, ~~~H^1_c(U,\ZZ)=(\prod_{v\in S}\ZZ)/\ZZ, ~~~H^i_c(U,\ZZ)_\RR=0 ~\text{for}~i>1
	\end{equation}
	On the other hand
	\[
	\Ext^i_U(\ZZ,\GG_m)=H^i(U,\GG_m)=\left\{\begin{array}{cc}
	\mathcal{O}_{K,S}^\times  & i=0 \\
	\mathrm{Pic}(U) & i=1\\
	\text{torsion of cofinite type} & i\geq 2
	\end{array}\right.
	\]
	The pairing $H^i_c(U,\ZZ)_\RR\times \Ext^{1-i}(\ZZ,\GG_m)_\RR \to \RR$ is thus perfect for $i=0$ by finiteness of the $S$-ideal class group and trivially perfect for $i\neq 0,1$. In degree $i=1$, the regulator pairing identifies with
	\begin{align*}
	(\prod_{v\in S}\ZZ)/\ZZ \times \mathcal{O}_{K,S}^\times\to \RR\\
	((\alpha_v),u)\mapsto \sum \alpha_v\log|u|_v
	\end{align*}
	which is non-degenerate modulo torsion by the theorem of $S$-units of Dirichlet (see for instance \cite[3.3.12]{EATAN}).
	Indeed, under the indentification
	\[
	\Hom((\prod_{v\in S}\RR)/\RR,\RR)\simeq \{(x_i)\in \RR^s,\sum x_i=0\},\]
	the map
	\[
	\mathcal{O}_{K,S}^\times \to \Hom((\prod_{v\in S}\RR)/\RR,\RR)\subset \RR^s
	\]
	induced by the pairing is the usual logarithmic embedding.
	\item If $U=C$ is a proper curve over $\FF_q$, we have $H^0(C,\ZZ)=\ZZ$, $H^i(C,\ZZ)$ torsion for $i>0$ and \[
	\Ext^i_C(\ZZ,\GG_m)=H^i(C,\GG_m)=\left\{\begin{array}{cc}
	\mathcal{O}_C(C)^\times  & i=0 \\
	\mathrm{Pic}(C) & i=1\\
	\text{torsion of cofinite type} & i\geq 2
	\end{array}\right.
	\]
	Since $C$ is proper, normal and connected, $\mathcal{O}_C(C)=K\cap \bar{\FF}_q$ is the field of constants of $K$, and a finite extension of $\FF_q$; in particular $H^0(C,\GG_m)$ is finite. The pairing $H^i(C,\ZZ)_\QQ \times \Ext^{1-i}(\ZZ,\GG_m)_\QQ \to \QQ$ is thus trivially perfect for $i\neq 0$. In degree $i=0$, the rational regulator pairing identifies with the top row in the following commutative diagram :
	\[\begin{tikzcd}
	\QQ \times \mathrm{Pic}(C)_\QQ \rar \arrow{d}{\mathrm{id}\times\mathrm{deg}_\QQ}[swap]{\simeq} & \QQ \dar[equal]\\
	\QQ \times \QQ \arrow{r}{r,s\mapsto rs} & \QQ
	\end{tikzcd}\]
	The bottom pairing is perfect, so it remains only to show that the degree map is an isomorphism rationally, which follows from the exact sequence $0 \to \mathrm{Pic}^0(C) \to \mathrm{Pic}(C) \xrightarrow{\mathrm{deg}} f\ZZ \to 0$,\footnote{We have $f=[\mathcal{O}_C(C):\FF_q]$, see for instance \cite[VII.5, cor. 5]{Weil}} and the finiteness of the class group of degree $0$ divisors $\mathrm{Pic}^0(C)$.\footnote{See for instance \cite[IV.4 Thm 7]{Weil}; the group $k^1_\mathbb{A}/k^\times \Omega(\emptyset)$ of \textit{ibid.} identifies with $\mathrm{Pic}^0(C)$, as discussed at the beginning of \textit{ibid.}, Ch. VI}
\end{itemize}
\end{proof}
\section{Splitting of the Weil-étale complex after base change to \texorpdfstring{$\RR$}{R}}\label{splitting_weil_etale_complex}

We denote by $(-)_\RR:=-\otimes^L \RR$ the base change by $\RR$.

\begin{prop}
	Let $F$ be a big or tiny sheaf. The Weil-étale complex with compact support splits rationally, naturally in big-to-tiny morphisms and big-to-tiny short exact sequences:
	\[
		R\Gamma_{W,c}(U,F)_\QQ = R\Gamma_c(U,F)_\QQ \oplus D_{F,\QQ}[1]
	\]
	Moreover, by the perfectness of the regulator pairing after base change to $\RR$, there is an isomorphism, natural in big-to-tiny morphisms and big-to-tiny short exact sequences:
	\begin{equation}\label{splitting}
	\tau_F : R\Gamma_{W,c}(U,F)_\RR \xrightarrow{\simeq} D_{F,\RR}[1]\oplus D_{F,\RR}[2]
	\end{equation}
\end{prop}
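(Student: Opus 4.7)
The plan is to first establish the rational splitting by showing that $\beta_F\otimes\QQ$ vanishes in $D(\QQ)$, and then to leverage the perfectness of the regulator (Theorem \ref{regulator_perfect}) to identify $R\Gamma_c(U,F)_\RR$ with $D_{F,\RR}[2]$, turning the rational splitting into the desired $\RR$-splitting $\tau_F$.

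The first step proceeds by direct inspection: $D_F = R\Hom(R\Hom_U(F,\GG_m),\QQ[-3])$ is already a complex of $\QQ$-vector spaces, cohomologically concentrated in degrees $2$ and $3$. By Artin-Verdier duality (Theorem \ref{AVduality}), $H^i_c(U,F) = \hat{H}^i_c(U,F)$ is torsion of cofinite type for $i=2,3$ and vanishes for $i\geq 4$; hence $R\Gamma_c(U,F)_\QQ$ has zero cohomology in degrees $\geq 2$, so $\beta_F\otimes\QQ$ vanishes on cohomology in every degree. The spectral sequence \eqref{verdier_exact_sequence} shows that $\Ext^1$ over $\QQ$ vanishes, so morphisms between complexes of $\QQ$-vector spaces in $D(\QQ)$ are determined by their induced map on cohomology; therefore $\beta_F\otimes\QQ = 0$. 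Tensoring the defining triangle of $R\Gamma_{W,c}(U,F)$ with $\QQ$, the first morphism becomes zero and the triangle splits, giving $R\Gamma_{W,c}(U,F)_\QQ \simeq R\Gamma_c(U,F)_\QQ \oplus D_{F,\QQ}[1]$. To make this splitting functorial in $F$, one works with the cofiber sequence in the stable $\infty$-category $\mathscr{D}(\QQ)$: once the natural transformation $\beta_{(-)}\otimes\QQ$ is identified as null, its cofiber is canonically the direct sum of the target with the shift of the source.

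For the $\RR$-splitting, I would then use that the regulator provides a natural isomorphism $R\Hom_U(F,\GG_m)_\RR \xrightarrow{\simeq} R\Hom(R\Gamma_c(U,F)_\RR,\RR[-1])$. Applying $R\Hom(-,\RR[-1])$ and invoking the biduality of perfect $\RR$-complexes with target $\RR[-1]$ (valid because $R\Hom_U(F,\GG_m)_\RR$ is perfect: the torsion-of-cofinite-type groups $\Ext^2$ and $\Ext^3$ vanish after $\otimes\RR$), we obtain the dual isomorphism $R\Gamma_c(U,F)_\RR \xrightarrow{\simeq} R\Hom(R\Hom_U(F,\GG_m),\RR[-1]) = D_{F,\RR}[2]$. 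Substituting into the base change of the rational splitting yields $\tau_F: R\Gamma_{W,c}(U,F)_\RR \simeq D_{F,\RR}[2] \oplus D_{F,\RR}[1]$.

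The main obstacle is the naturality of the rational splitting with respect to big-to-tiny morphisms and short exact sequences: for a fixed $F$, the splittings of the triangle form a torsor over $\Hom_{D(\QQ)}(D_{F,\QQ}[1], R\Gamma_c(U,F)_\QQ)$, which is generically non-zero, so no individual splitting is canonical in the $1$-categorical sense. The argument must therefore be carried out at the level of natural transformations in $\mathscr{D}(\QQ)$, using that $\beta_{(-)}$ lifts the Artin-Verdier transformation into $E_{(-)} = R\Hom(R\Hom_U(-,\GG_m),\QQ/\ZZ[-3])$, which itself vanishes rationally since $\QQ/\ZZ\otimes\QQ=0$; this yields a coherent null-homotopy of $\beta_{(-)}\otimes\QQ$ from which the splitting is functorially constructed.
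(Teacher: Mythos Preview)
Your computation that $\beta_F\otimes\QQ=0$ in $D(\QQ)$ is correct and is equivalent to what the paper does (build a section of $p_F$). But you miscompute the obstruction torsor, and that miscomputation sends you down an unnecessary and unworkable detour.

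Concretely: with the standard shift convention, $D_{F,\QQ}[1]$ has cohomology in degrees $1,2$ while $R\Gamma_c(U,G)_\QQ$ has cohomology in degrees $0,1$, so over $\QQ$ the exact sequence \eqref{verdier_exact_sequence} gives
\[
\Hom_{D(\QQ)}\bigl(D_{F,\QQ}[1],\,R\Gamma_c(U,G)_\QQ\bigr)\;=\;\Hom\bigl(\Ext^1_U(F,\GG_m)^\dagger,\;H^1_c(U,G)_\QQ\bigr).
\]
For a big-to-tiny morphism $F\to G$ this vanishes: either $F$ is big, so $\Ext^1_U(F,\GG_m)$ is finite and its $\QQ$-dual is zero, or $G$ is tiny, so $H^1_c(U,G)_\QQ=0$. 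Hence the splitting of the rationalized triangle is \emph{unique}, not merely a torsor, and this uniqueness immediately yields naturality in big-to-tiny morphisms and (by the analogous computation one degree up) in the boundary map of a big-to-tiny short exact sequence. This is exactly the paper's argument: one shows $\Hom(D_{F,\QQ}[1],R\Gamma_{W,c}(U,G)_\QQ)\xrightarrow{\simeq}\Hom(D_{F,\QQ}[1],D_{G,\QQ}[1])$, takes $s_F$ to be the preimage of the identity, and checks the two naturality squares by a one-line diagram chase.

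Your alternative route---promoting $\beta_{(-)}\otimes\QQ$ to a natural transformation in $\mathscr{D}(\QQ)$ and exhibiting a coherent null-homotopy via $E_{(-)}\otimes\QQ=0$---is not available here. As the paper stresses (see the introduction and the proof of Theorem~\ref{long_exact_cohomolgy_sequence}), $\beta_F$ is only constructed object-by-object in the homotopy category $D(\ZZ)$, with functoriality in big-to-tiny morphisms checked \emph{a posteriori} on cohomology; there is no $\infty$-natural transformation $\beta_{(-)}$ to rationalize. Moreover, $\beta_F$ does not factor through the Artin--Verdier map: it is the composite $D_F\to R\Gamma_c\to R\hat\Gamma_c\to E_F$ that equals the canonical $D_F\to E_F$, so the vanishing of $E_F\otimes\QQ$ does not by itself produce a null-homotopy of $\beta_F\otimes\QQ$.

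Your treatment of the $\RR$-splitting via biduality of perfect $\RR$-complexes is fine and agrees with the paper.
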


\begin{proof}
By tensoring with $\QQ$ the defining triangle of $R\Gamma_{W,c}(U,F)$ and rotating once, we obtain a distinguished triangle
\[
R\Gamma_c(U,F)_\QQ \xrightarrow{i_F} R\Gamma_{W,c}(U,F)_\QQ \xrightarrow{p_F} D_{F,\QQ}[1] \to R\Gamma_c(U,F)_\QQ[1]
\]
Let us show that this triangle splits canonically. We will prove the existence of a section of $p$: 
\[
s:D_F[1]_\QQ \to R\Gamma_{W,c}(U,F)_\QQ.
\]

Let $G$ be another big or tiny sheaf. Consider the exact sequence obtained by applying $\Hom(D_{F,\QQ}[1],-)$ to the above triangle for $G$ :
\[
\begin{tikzpicture}[descr/.style={fill=white,inner sep=1.5pt}]
        \matrix (m) [
            matrix of math nodes,
            row sep=1em,
            column sep=2.5em,
            text height=1.5ex, text depth=0.25ex
        ]
        { \Hom(D_{F,\QQ}[1],R\Gamma_c(U,G)_\QQ) & \Hom(D_{F,\QQ}[1],R\Gamma_{W,c}(U,G)_\QQ) &  \\
            & \Hom(D_{F,\QQ}[1],D_{G,\QQ}[1]) & \Hom(D_{F,\QQ}[1],R\Gamma_c(U,G)_\QQ[1])\\
        };

        \path[overlay,->, font=\scriptsize,>=latex]
        (m-1-1) edge (m-1-2)
        (m-1-2) edge[out=355,in=175] (m-2-2)
        (m-2-2) edge (m-2-3);
        
\end{tikzpicture}
\]
Since $D_{F,\QQ}[1]$ is cohomologically concentrated in degrees $3$ and $4$ and $R\Gamma_c(U,G)_\QQ$ is concentrated in degrees $0$ and $1$, the usual argument with the exact sequence \eqref{verdier_exact_sequence} shows that both left and right terms are zero, hence a canonical isomorphism:
\[
\Hom(D_{F,\QQ}[1],R\Gamma_{W,c}(U,G)_\QQ)\xrightarrow{\simeq}\Hom(D_{F,\QQ}[1],D_{G,\QQ}[1]).
\]
The same argument also gives $\Hom(D_{F,\QQ}[1],R\Gamma_{W,c}(U,G)_\QQ[1])\xrightarrow{\simeq}\Hom(D_{F,\QQ}[1],D_{G,\QQ}[2])$.

Let $s_F$ be the inverse image of the identity under $\Hom(D_{F,\QQ}[1],R\Gamma_{W,c}(U,F)_\QQ)\xrightarrow{\simeq}\Hom(D_{F,\QQ}[1],D_{F,\QQ}[1])$. The following diagram gives a morphism of distinguished triangles (note that the right square does commute, since $\Hom(D_{F,\QQ}[1],R\Gamma_c(U,F)_\QQ[1])=0$):
\[
\begin{tikzcd}
R\Gamma_c(U,F)_\QQ \rar \dar[equal] & R\Gamma_c(U,F)_\QQ \oplus D_{F,\QQ}[1] \rar \dar["{(i_F,s_F)}"] & D_{F,\QQ}[1] \rar \dar[equal] & R\Gamma_c(U,F)_\QQ[1] \dar[equal]\\
R\Gamma_c(U,F)_\QQ \rar["i"] & R\Gamma_{W,c}(U,F)_\QQ \rar & D_{F,\QQ}[1] \rar["p"] & R\Gamma_c(U,F)_\QQ[1]\\
\end{tikzcd}
\]
hence by the five lemma an isomorphism
\[
(i_F,s_F):R\Gamma_c(U,F)_\QQ \oplus D_{F,\QQ}[1] \xrightarrow{\simeq} R\Gamma_{W,c}(U,F)_\QQ
\]

Let us see the naturality of the above isomorphism. Let $F\to G$ be a big-to-tiny morphism. We have already proved that $i$ is natural (see the proof of Theorem \ref{long_exact_cohomolgy_sequence}), so it suffices to show that $s$ is a natural transformation, i.e. that the following diagram commutes:
\[
\begin{tikzcd}
D_{F,\QQ}[1]\rar["s_H"] \dar & R\Gamma_{W,c}(U,F) \dar \\
D_{G,\QQ}[1]\rar["{s_G}"] & R\Gamma_{W,c}(U,G)
\end{tikzcd}
\]
The horizontal arrows in the following diagram are isomorphisms, and the diagram is commutative by commutativity of \eqref{exactitude_weil_etale}, whence we conclude with a simple diagram chasing:
\[
\begin{tikzcd}
\Hom(D_{F,\QQ}[1],R\Gamma_{W,c}(U,F)_\QQ)\rar["\simeq"] \dar & \Hom(D_{F,\QQ}[1],D_{F,\QQ}[1]) \dar\\
\Hom(D_{F,\QQ}[1],R\Gamma_{W,c}(U,G)_\QQ)\rar["\simeq"] & \Hom(D_{F,\QQ}[1],D_{G,\QQ}[1]) \\
\Hom(D_{G,\QQ}[1],R\Gamma_{W,c}(U,G)_\QQ)\rar["\simeq"] \uar & \Hom(D_{G,\QQ}[1],D_{G,\QQ}[1]) \uar
\end{tikzcd}
\]

Let $0\to F \to G \to H \to 0$ be a big-to-tiny short exact sequence. Similarly to the previous step, $i$ is natural, and it remains to see that the diagram
\[
\begin{tikzcd}
D_{H,\QQ}[1]\rar["s_H"] \dar & R\Gamma_{W,c}(U,H) \dar \\
D_{F,\QQ}[2]\rar["{s_F[1]}"] & R\Gamma_{W,c}(U,F)[1]
\end{tikzcd}
\]
commutes. We conclude by a similar diagram chasing in the diagram
\[
\begin{tikzcd}
\Hom(D_{H,\QQ}[1],R\Gamma_{W,c}(U,H)_\QQ)\rar["\simeq"] \dar & \Hom(D_{H,\QQ}[1],D_{H,\QQ}[1]) \dar\\
\Hom(D_{H,\QQ}[1],R\Gamma_{W,c}(U,F)_\QQ[1])\rar["\simeq"] & \Hom(D_{H,\QQ}[1],D_{F,\QQ}[2]) \\
\Hom(D_{F,\QQ}[2],R\Gamma_{W,c}(U,F)_\QQ[1])\rar["\simeq"] \uar & \Hom(D_{F,\QQ}[2],D_{F,\QQ}[2]) \uar
\end{tikzcd}.
\]

The regulator pairing gives an isomorphism, natural in complexes of étale sheaves $F\in \mathscr{D}(U)$
\begin{align*}
    R\Gamma_c(U,F)_\RR  \xrightarrow{\simeq} R\Hom((R\Hom_U(F,\GG_m),\RR[-1])&=R\Hom(R\Hom_U(F,\GG_m),\QQ[-1])\otimes \RR\\
    &= D_{F,\RR}[2]
\end{align*}
hence the claimed isomorphism natural in big-to-tiny morphisms and big-to-tiny short exact sequences
\[
\tau_F : R\Gamma_{W,c}(U,F)_\RR \xrightarrow{\simeq} D_{F,\RR}[1]\oplus D_{F,\RR}[2]
\]
\end{proof}

\begin{rmk}
	\begin{itemize}
		\item[]
		\item If $U$ is a curve the same holds over $\QQ$, using the rational regulator pairing.
		\item The above isomorphism implies that the functor $R\Gamma_{W,c}(U,-)_\RR$ is exact in big-to-tiny short exact sequences.
	\end{itemize}
\end{rmk}

\section{The Weil-étale Euler characteristic}

\subsection{Construction}\label{Construction}

In this section we will use the determinant construction of Knudsen-Mumford \cite{detdiv}, and the subsequent work of Breuning, Burns and Knudsen, in particular \cite{Breuning08}. Let $R$ be a Noetherian ring; denote $\mathrm{Proj}_R$ the exact category of projective finite type $R$-modules, $\mathrm{Gr}^b(\mathrm{Mod}_R^{ft})$ the bounded graded abelian category of finite type $R$-modules and $D_{perf}(R)$ the derived category of perfect complexes. For $\mathcal{P}$ a Picard groupoid, the Picard groupoid of determinants $\mathrm{det}(\mathcal{C},\mathcal{P})$ for $\mathcal{C}$ an exact or triangulated category is defined in \cite{Knudsen02} and \cite{Breuning11} respectively. Their objects are determinants, that is pairs $f=(f_1,f_2)$ where $f_1$ is a functor $\mathcal{C}^{iso}\to \mathcal{P}$ and $f_2$ maps each exact sequence/distinguished triangle $\Delta : X\to Y \to Z$ to an isomorphism $f_2(\Delta) : f_1(Y)\to f_1(X)\otimes f_1(Z)$, satisfying certain axioms. When the context is clear we'll abuse notation and write $f$ for both $f_1$ and $f_2$. There is a canonical inclusion $I:\mathrm{Proj}_\ZZ\hookrightarrow \mathrm{Gr}^b(\mathrm{Mod}_R^{ft})$ where $I(A)$ is $A$ in degree $0$ and $0$ elsewhere. We extend $I$ naturally to exact sequences. Thus $I$ defines a restriction functor 
\[
I^\ast:\mathrm{det}(\mathrm{Gr}^b(\mathrm{Mod}^{ft}_R),\mathcal{P}) \to \mathrm{det}(\mathrm{Proj}_\ZZ,\mathcal{P})
\]
for any Picard groupoid $\mathcal{P}$, given by $f=(f_1,f_2)\mapsto (f_1\circ I, f_2\circ I)$. On the other hand, there is a functor $H:D_{perf}(R)\to \mathrm{Gr}^b(\mathrm{Mod}^{ft}_R)$ given by $X\mapsto (H^i(X))_i$, and for a distinguished triangle 
\[
\Delta: X\xrightarrow{u} Y \xrightarrow{v} Z \xrightarrow{w}
\] we define $H(\Delta)$ to be the following exact sequence, induced by the long exact cohomology sequence:
\[
0 \to \mathrm{ker}(H(u))\to H(X) \to H(Y) \to H(Z) \xrightarrow{\partial} \mathrm{ker}(H(u))[1]\to 0
\]
Thus $H$ defines an extension functor $H^\ast: \mathrm{det}( \mathrm{Gr}^b(\mathrm{Mod}^{ft}_R),\mathcal{P}) \to \mathrm{det}( D_{perf}(R),\mathcal{P})$, mapping a determinant $(g_1,g_2)$ to $(g_1\circ H,f_2)$ where $f_2$ is deduced from applying $g_2$ to the exact sequences of the form $H(\Delta)$ for $\Delta$ a distinguished triangle \cite[Prop. 5.8]{Breuning11}.

The functor $I$ factors as $\mathrm{Proj}_\ZZ \to \mathrm{Mod}^{ft}_R \to \mathrm{Gr}^b(\mathrm{Mod}^{ft}_R)$ hence for $R$ a regular ring, $I^\ast$ is an equivalence of categories for any Picard category $\mathcal{P}$ by \cite[Prop. 5.5]{Breuning11} and Quillen's resolution theorem \cite[Cor. 2 in §4]{Quillen73} (see also the proof of \cite[Prop. 3.4]{Breuning08}). Note that by \cite[Lemma 5.10, Prop. 5.11]{Breuning11}, $H^\ast$ is also an equivalence of categories since there is an identification $D_{perf}(R)\simeq D^b(\mathrm{Mod}_R^{ft})$.

Consider the usual determinant functor 
\[
 \mathrm{det}_R \in \mathrm{det}(\mathrm{Proj}_R, \mathcal{P}_R), ~ M\mapsto (\Lambda^{top}M,\mathrm{rank}M)
\]
 of \cite{detdiv} with values in the Picard groupoid of graded $R$-lines, that is of pairs $(L,n)$ where $L$ is an invertible $R$-module and $n:\Spec(R)\to \ZZ$ a locally constant function, with morphisms $(L,n)\to (L',m)$ the isomorphisms $L\to L'$ if $n=m$ and none otherwise.
\begin{rmk}[{\cite[Section 2.5]{Burns03}}]
 We have $\pi_0\mathcal{P}_R=H^0(\Spec(R),\ZZ)\oplus \mathrm{Pic}(R)$ and $\pi_1 \mathcal{P}_R = R^\times$. Hence by $K$-theoretic computations, $\mathrm{det}_R$ is the universal determinant for $R$ local, or semi-simple, or the ring of integers in a number field.
\end{rmk}
For basic properties of determinant functors on triangulated categories, see \cite[Section 3]{Breuning11}. Let $g_R\in\mathrm{det}(\mathrm{Gr}^b(\mathrm{Mod}^{ft}_R), \mathcal{P}_R)$ be a determinant functor extending $\det_R$, i.e. such that $I^\ast(g_R)=\det_R$. Define then $f_R:=H^\ast(g_R)\in \mathrm{det}(D_{perf}(R), \mathcal{P}_R)$.

We specialize now to $R=\ZZ,~\RR$. Whenever it makes sense, denote by $B$ the base change functor $X\mapsto X\otimes_\ZZ \RR$. The following diagram is commutative
\[
\begin{tikzcd}
\mathrm{Proj}_\ZZ \arrow[r, "I", hook] \arrow[d, "B"] & \mathrm{Gr}^b(\mathrm{Mod}^{ft}_\ZZ) \arrow[d, "B"] & D_{perf}(\ZZ) \arrow[l, "H"] \arrow[d, "B"] \\
\mathrm{Proj}_\RR \arrow[r, "I", hook]                & \mathrm{Gr}^b(\mathrm{Mod}^{ft}_\RR)                & D_{perf}(\RR) \arrow[l, "H"]               
\end{tikzcd}
\]
and there is a natural symmetric monoidal functor $B:\mathcal{P}_\ZZ\to \mathcal{P}_\RR$, hence we obtain a commutative diagram
\[
\begin{tikzcd}
{\mathrm{det}(\mathrm{Proj}_\ZZ,\mathcal{P}_\ZZ)} \arrow[d, "B_\ast"] & {\mathrm{det}(\mathrm{Gr}^b(\mathrm{Mod}^{ft}_\ZZ), \mathcal{P}_\ZZ)} \arrow[l, "I^\ast"] \arrow[d, "B_\ast"] \arrow[r, "H^\ast"] & {\mathrm{det}(D_{perf}(\ZZ), \mathcal{P}_\ZZ)} \arrow[d, "B_\ast"] \\
{\mathrm{det}(\mathrm{Proj}_\ZZ,\mathcal{P}_\RR)}                     & {\mathrm{det}(\mathrm{Gr}^b(\mathrm{Mod}^{ft}_\ZZ), \mathcal{P}_\RR)} \arrow[l, "I^\ast"] \arrow[r, "H^\ast"]                      & {\mathrm{det}(D_{perf}(\ZZ), \mathcal{P}_\RR)}                     \\
{\mathrm{det}(\mathrm{Proj}_\RR,\mathcal{P}_\RR)} \arrow[u, "B^\ast"] & {\mathrm{det}(\mathrm{Gr}^b(\mathrm{Mod}^{ft}_\RR), \mathcal{P}_\RR)} \arrow[l, "I^\ast"] \arrow[u, "B^\ast"] \arrow[r, "H^\ast"] & {\mathrm{det}(D_{perf}(\RR), \mathcal{P}_\RR)} \arrow[u, "B^\ast"]
\end{tikzcd}
\]
where upper stars denote precomposition and lower stars denote postcomposition. From this diagram, we see that there is an isomorphism of determinants $\gamma : B\circ g_\ZZ \simeq B^\ast(g_\RR)$ inducing an isomorphism
\begin{equation}\label{base_change_det}
H^\ast(\gamma) : B\circ f_\ZZ \simeq B^\ast(f_\RR)
\end{equation}

Applying the determinant construction to the isomorphism \ref{splitting}, we get a trivialization
\[
\lambda_F : f_\RR(R\Gamma_{W,c}(U,F)_\RR) \simeq f_\RR(D_{F,\RR}[1]\oplus D_{F,\RR}[2]) \xleftarrow{\simeq} f_\RR(D_{F,\RR}[1]])\otimes f_\RR(D_{F,\RR}[2]) \xrightarrow{\simeq} \RR
\]
where the last isomorphism holds because for any perfect complex $X$, the distinguished triangle $X \to 0 \to X[1] \to$ gives an isomorphism 
\[ f_\RR(X)\otimes f_\RR(X[1])\xrightarrow{\simeq}f_\RR(0)=\RR.\]
The canonical isomorphism
\[
H^\ast(\gamma)_{R\Gamma_{W,c}(U,F)}:f_\ZZ(R\Gamma_{W,c}(U,F))\otimes_\ZZ\RR \xrightarrow{\simeq} f_\RR(R\Gamma_{W,c}(U,F)_\RR)
\]
gives a natural embedding
\[
f_\ZZ(R\Gamma_{W,c}(U,F)) \hookrightarrow f_\RR(R\Gamma_{W,c}(U,F)_\RR)
\]
of the underlying (ungraded) lines, i.e. of abelian groups. 

\begin{rmk}
Define the Picard groupoid of embedded graded lines $\mathcal{P}_{\ZZ\to \RR}$, whose objects are pairs $(f:L\to V,n)$ where $L$ is a free $\ZZ$-modules of rank $1$, $V$ an $\RR$-vector space of dimension $1$ and $n\in \ZZ$, with a map $f:L\to V$ of abelian groups such that the induced map $L\otimes \RR \to V$ is an isomorphism. The projection $\mathcal{P}_{\ZZ\to \RR}\to \mathcal{P}_\ZZ$ and the functor $\mathcal{P}_\ZZ \to \mathcal{P}_{\ZZ\to \RR},~(L,n) \mapsto (L\to L\otimes \RR,n)$ are inverse equivalences of Picard groupoids. The above embedding can be seen as an object in $\mathcal{P}_{\ZZ\to \RR}$.
\end{rmk}

\begin{defi}
Let $F$ be a big or tiny sheaf. The (Weil-étale) Euler characteristic of $F$
is the positive integer $\chi_U(F)$ such that \[\lambda(f_\ZZ(R\Gamma_{W,c}(U,F)))=(\chi_U(F))^{-1}\ZZ\hookrightarrow \RR.\]
\end{defi}

\begin{defi}
Let $F$ be a big or tiny sheaf. We define the rank of $F$:
\[
E_U(F)=\sum_i (-1)^i\cdot i\cdot \dim_\RR(H^i_{W,c}(U,F)_\RR)
\]
\end{defi}

\begin{rmk}
By theorem \ref{regulator_perfect} and the computations of subsection \ref{computations} we have
\begin{align*}
    E_U(F)&=-1(\mathrm{rank}_{\ZZ}H^1_c(U,F)+\mathrm{rank}_{\ZZ}\Ext^1_U(F,\GG_m))+2(\mathrm{rank}_{\ZZ}\Hom_U(F,\GG_m))\\
    &= \mathrm{rank}_\ZZ(\Hom_U(F,\GG_m))-\mathrm{rank}_\ZZ(\Ext^1_U(F,\GG_m))\\
    &= \mathrm{rank}_\ZZ(H^1_c(U,F))-\mathrm{rank}_\ZZ(H^0_c(U,F))
\end{align*}
\end{rmk}

\begin{thm}
The rank and Euler characteristic are respectively additive and multiplicative with respect to big-to-tiny short exact sequences. That is, given a big-to-tiny short exact sequence
\[
0\to F \to G \to H \to 0
\]
we have
\begin{align*}
    E_U(G) &= E_U(F) + E_U(H)\\
    \chi_U(G) &= \chi_U(F)\chi_U(H)
\end{align*}
\end{thm}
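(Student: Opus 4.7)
My plan is to derive both assertions from the existence of a distinguished triangle
\[
R\Gamma_{W,c}(U,F) \to R\Gamma_{W,c}(U,G) \to R\Gamma_{W,c}(U,H)
\]
associated to any big-to-tiny short exact sequence, combined with the naturality of the splitting $\tau$ after base change to $\RR$ established in the previous section. This distinguished triangle is furnished by the $3\times 3$ diagram \eqref{exactitude_weil_etale} from the proof of Theorem \ref{long_exact_cohomolgy_sequence}: its first two columns are fiber sequences (the leftmost from the contravariance of $D$ applied to the short exact sequence, the middle one from Proposition \ref{milne_II.2.3}.b), its rows are fiber sequences by definition of $R\Gamma_{W,c}$, and hence its third column is a fiber sequence by Lemma \ref{nine_stable}.

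For the rank additivity, I would use the identity $E_U(F) = \mathrm{rank}_\ZZ H^1_c(U,F) - \mathrm{rank}_\ZZ H^0_c(U,F)$ from the remark following the definition of $E_U$, which is a consequence of the perfectness of the regulator pairing. Applying $R\Gamma_c$ to the short exact sequence and tensoring the long exact cohomology sequence with $\RR$, I use that $H^i_c(U,-)_\RR = 0$ for $i \geq 2$ (by Artin--Verdier duality, these groups are torsion of cofinite type for $i=2,3$ and vanish for $i\geq 4$). The long exact sequence collapses to a finite exact sequence of $\RR$-vector spaces
\[
0 \to H^0_c(U,F)_\RR \to H^0_c(U,G)_\RR \to H^0_c(U,H)_\RR \to H^1_c(U,F)_\RR \to H^1_c(U,G)_\RR \to H^1_c(U,H)_\RR \to 0,
\]
and the vanishing of the alternating sum of dimensions yields $E_U(G) = E_U(F) + E_U(H)$.

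For the multiplicativity of $\chi_U$, I apply the determinant functor $f_\ZZ$ to the distinguished triangle above, producing an isomorphism
\[
f_\ZZ(R\Gamma_{W,c}(U,G)) \simeq f_\ZZ(R\Gamma_{W,c}(U,F)) \otimes f_\ZZ(R\Gamma_{W,c}(U,H)),
\]
and the analogous isomorphism with $f_\RR$ over $\RR$, the two being compatible via the base change isomorphism $\gamma$ from \eqref{base_change_det}. The key step is to check that the trivializations $\lambda_F, \lambda_G, \lambda_H$ are multiplicative: under the above isomorphism of $\RR$-lines, $\lambda_G$ corresponds to $\lambda_F \otimes \lambda_H$. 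This rests on two ingredients: the naturality of the splitting $\tau$ in big-to-tiny short exact sequences, which produces a compatible decomposition of the entire triangle; and the naturality of the canonical trivialization $f_\RR(X) \otimes f_\RR(X[1]) \simeq \RR$ used in defining $\lambda$, which is induced by the determinant applied to the degenerate triangle $X \to 0 \to X[1]$. Once commutativity is established, the image $\chi_U(G)^{-1}\ZZ$ of the $\ZZ$-lattice under $\lambda_G$ equals $\chi_U(F)^{-1}\chi_U(H)^{-1}\ZZ$ under $\lambda_F \otimes \lambda_H$, yielding $\chi_U(G) = \chi_U(F)\chi_U(H)$.

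The main technical difficulty I anticipate is in verifying this last compatibility. Several canonical isomorphisms interact---the splitting $\tau$, the base change $\gamma$, the multiplicativity of determinants on distinguished triangles, and the trivialization via the degenerate triangle $X \to 0 \to X[1]$---and commutativity amounts to a diagram chase in the Picard groupoid $\mathcal{P}_\RR$. It should ultimately reduce to the axioms for determinant functors on triangulated categories in the Breuning sense, but the bookkeeping will require some care.
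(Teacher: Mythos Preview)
Your argument for the additivity of $E_U$ is correct and essentially the same as the paper's, just using the dual identity $E_U(F)=\mathrm{rank}_\ZZ H^1_c(U,F)-\mathrm{rank}_\ZZ H^0_c(U,F)$ in place of the one in terms of $\Ext^i_U(F,\GG_m)$.

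For the multiplicativity of $\chi_U$, however, there is a genuine gap in your opening step. You assert that the $3\times 3$ diagram \eqref{exactitude_weil_etale} together with Lemma~\ref{nine_stable} yields a \emph{distinguished} triangle
\[
R\Gamma_{W,c}(U,F)\to R\Gamma_{W,c}(U,G)\to R\Gamma_{W,c}(U,H).
\]
But Lemma~\ref{nine_stable} is a statement in the stable $\infty$-category $\mathscr{D}(\ZZ)$, and applying it requires the squares of \eqref{exactitude_weil_etale} to commute in $\mathscr{D}(\ZZ)$, i.e.\ to come equipped with specified homotopies. The maps $\beta_F$ were constructed by a lifting argument in the homotopy category $D(\ZZ)$ (via the short exact sequence \eqref{verdier_exact_sequence}), and the commutativity of the squares $\beta_G\circ f_\ast=f_\ast\circ\beta_F$ was only checked on cohomology in degrees $2$ and $3$. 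So the diagram lives in $D(\ZZ)$, where cones are not functorial and the $3\times 3$ lemma does not produce a distinguished third column. This is precisely why Theorem~\ref{long_exact_cohomolgy_sequence} only claims a long exact \emph{cohomology} sequence, and proves even that indirectly via derived $p$-completion rather than by exhibiting a triangle.

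The paper's fix is to exploit that the determinant $f_\ZZ$ was chosen of the form $H^\ast(g_\ZZ)$: even though the triangle $\Delta$ is not known to be distinguished, $H(\Delta)$ is an exact sequence in $\mathrm{Gr}^b(\mathrm{Mod}_\ZZ^{ft})$ by Theorem~\ref{long_exact_cohomolgy_sequence}, so $g_\ZZ(H\Delta)$ still gives a well-defined isomorphism of integral lines. Over $\RR$ the situation is better: the splitting $\tau$ shows that $\Delta_\RR$ \emph{is} distinguished, so $f_\RR(\Delta_\RR)$ is available. The bulk of the paper's argument is then the large diagram comparing $g_\ZZ(H\Delta)$ with $f_\RR(\Delta_\RR)$ via base change and the splitting, together with a separate associativity/commutativity lemma for determinants to handle exactly the bookkeeping you anticipate in your last paragraph.
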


\begin{proof}
The result for $E_U$ is immediate given the remark above, the long exact sequence of $\Ext$ groups and the fact that $\Ext^i(F,\GG_m)$ is zero for $i<0$ and torsion for $i>1$.

Let $\Delta$ denote the (non-distinguished) triangle $R\Gamma_{W,c}(U,F)\to R\Gamma_{W,c}(U,G) \to R\Gamma_{W,c}(U,H) \to$. By Theorem \ref{long_exact_cohomolgy_sequence}, $H(\Delta)$ is an exact sequence in $\mathrm{Gr}^b(\mathrm{Mod}_\ZZ^{ft})$, and we can consider the following diagram:
\begin{equation}\label{multiplicativity_euler_characteristic}
\begin{tikzcd}
f_\ZZ(R\Gamma_{W,c}(U,F)) \otimes f_\ZZ(R\Gamma_{W,c}(U,H)) \rar["g_\ZZ(H\Delta)"] \dar & f_\ZZ(R\Gamma_{W,c}(U,G)) \dar \\
(f_\ZZ(R\Gamma_{W,c}(U,F)))_\RR \otimes (f_\ZZ(R\Gamma_{W,c}(U,H))_\RR \rar["(g_\ZZ(H\Delta))_\RR"] \dar["H^\ast(\gamma)_{R\Gamma_{W,c}(U,F)}\otimes H^\ast(\gamma)_{R\Gamma_{W,c}(U,H)}"] & (f_\ZZ(R\Gamma_{W,c}(U,G))_\RR \dar["H^\ast(\gamma)_{R\Gamma_{W,c}(U,G)}"]\\
f_\RR(R\Gamma_{W,c}(U,F)_\RR) \otimes_\RR f_\RR(R\Gamma_{W,c}(U,H)_\RR) \rar["g_\RR(H(\Delta_\RR))=f_\RR(\Delta_\RR)"] \arrow{d}{f_\RR(\tau_F)\otimes f_\RR(\tau_H)} & f_\RR(R\Gamma_{W,c}(U,F)_\RR) \arrow{d}{f_\RR(\tau_G)}\\
f_\RR(D_{F,\RR}[1]\oplus D_{F,\RR}[2]) \otimes_\RR f_\RR(D_{H,\RR}[1]\oplus D_{H,\RR}[2]) \rar & f_\RR(D_{G,\RR}[1]\oplus D_{G,\RR}[2])\\
f_\RR(D_{F,\RR}[1])\otimes_\RR f_\RR(D_{F,\RR}[2]) \otimes_\RR f_\RR(D_{H,\RR}[1]) \otimes_\RR f_\RR(D_{H,\RR}[2]) \rar \uar \dar & f_\RR(D_{G,\RR}[1]) \otimes f_\RR(D_{G,\RR}[2]) \dar\uar\\
\RR \otimes_\RR \RR \rar["mult"] & \RR
\end{tikzcd}
\end{equation}

\begin{itemize}
\item Recall that we have set $f_\ZZ:=H^\ast(g_\ZZ)$, hence the top arrow would be $f_\ZZ(\Delta)$ if $\Delta$ was actually distinguished. Note that $\Delta\otimes \RR$ is distinguished. The top square is a well-defined map in $\mathcal{P}_{\ZZ\to \RR}$, in particular it is a commutative square of abelian groups.

\item Denote $H^\ast_{W,c}(U,-):=H^\ast(R\Gamma_{W,c}(U,-))$. Since $\gamma$ is a morphism of determinants, there is a commutative diagram associated to the exact sequence $H\Delta$:
\[
\begin{tikzcd}
B\circ g_\ZZ(H^\ast_{W,c}(U,F)) \otimes B\circ g_\ZZ(H^\ast_{W,c}(U,H)) \rar["B\circ g_\ZZ(H\Delta)"] \dar["\gamma_{H^\ast_{W,c}(U,F)}\otimes \gamma_{H^\ast_{W,c}(U,H)}"] & B\circ g_\ZZ(H^\ast_{W,c}(U,F)) \dar["\gamma_{H^\ast_{W,c}(U,G)}"]\\
g_\RR(H^\ast_{W,c}(U,F)_\RR) \otimes_\RR g_\RR(H^\ast_{W,c}(U,H)_\RR) \rar["g_\RR((H\Delta)_\RR)"] & g_\RR(H^\ast_{W,c}(U,F)_\RR)
\end{tikzcd}
\]
Unwinding the definitions, we see that the above diagram is exactly the second square.
\item The third square commutes because $\tau$ defines an isomorphism of distinguished triangles.
\item  The commutativity of the fourth square is a formal consequence of the associativity and commutativity axioms of the determinant, as we prove in the next lemma \ref{assocativity_determinant}.
\item The last square commutes by \cite[Lemma 3.6(ii) and its proof]{Breuning11}, and the fact that the unit structure on $\RR=f_\RR(0)$ coming from the distinguished triangle $0\to 0 \to 0 \to$ is given by the multiplication map. Since the image by the multiplication map of $x\ZZ \otimes y \ZZ\subset \RR\otimes \RR$ is $xy\ZZ \subset \RR$, we are done.
\end{itemize}
\end{proof}

\begin{lem}\label{assocativity_determinant}
	Let $[-]:\mathcal{T}\to\mathcal{P}$ be a determinant functor on a triangulated category. Let $\Delta : A\to B \to C \to$ and $\Delta' : A'\to B' \to C'\to $ be distinguished triangles.
	The following diagram, coming from the $3\times 3$ diagram with columns $\Delta, \Delta\oplus \Delta'$, $\Delta'$, commutes:
	\[
	\begin{tikzcd}
		\left[B\oplus B'\right] \rar \dar & \left[B\right]\left[B'\right] \arrow{dd}\\
		\left[A\oplus A'\right]\left[C\oplus C'\right] \dar & \\
		\left[A\right]\left[A'\right]\left[C\right]\left[C'\right] \rar["\text{flip}"] & \left[A\right]\left[C\right]\left[A'\right]\left[C'\right]
	\end{tikzcd}\]
\end{lem}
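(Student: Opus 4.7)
The plan is to derive the commutativity as a direct application of the $3\times 3$ (or ``nine term'') coherence axiom for determinant functors on a triangulated category, as formulated in \cite[Def. 3.1]{Breuning11}. First I would assemble the commutative $3\times 3$ diagram
\[
\begin{tikzcd}
A \rar \dar & A \oplus A' \rar \dar & A' \dar \\
B \rar \dar & B \oplus B' \rar \dar & B' \dar \\
C \rar & C \oplus C' \rar & C'
\end{tikzcd}
\]
whose middle column is the direct sum triangle $\Delta \oplus \Delta'$, whose outer columns are $\Delta$ and $\Delta'$, and whose rows are the canonical split distinguished triangles induced by the direct sum decompositions. All rows and columns are then genuinely distinguished, so this is precisely a diagram to which the $3\times 3$ axiom applies.

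Applying the axiom yields the commutativity of the pentagon in $\mathcal{P}$ that expresses the agreement of the two ways of decomposing $[B\oplus B']$ into a tensor product of the four corner determinants $[A],[A'],[C],[C']$. The path through the middle column produces first $[A\oplus A'][C\oplus C']$ (using the determinant of $\Delta\oplus\Delta'$) and then $[A][A'][C][C']$ (using the determinants of the top and bottom rows); the path through the middle row produces first $[B][B']$ (using the determinant of the middle row) and then $[A][C][A'][C']$ (using the determinants of the left and right columns, namely $\Delta$ and $\Delta'$). The two outputs differ only in the order of the middle two factors, and this discrepancy is reconciled by the symmetric monoidal flip $[A'][C] \to [C][A']$ appearing at the bottom of the square in the statement.

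The content is therefore purely formal, and I do not foresee any substantive obstacle: the verification reduces to writing down the $3\times 3$ diagram above and unpacking what the coherence axiom says in this particular case, using that the determinant isomorphisms attached to split triangles are the canonical ones coming from the symmetric monoidal structure. The only care required is in tracking the flip carefully, which is precisely why it appears explicitly in the statement of the lemma.
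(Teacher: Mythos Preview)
Your proposal has a genuine gap: there is no ``$3\times 3$ coherence axiom'' in \cite[Def.~3.1]{Breuning11}. Breuning's definition of a determinant functor on a triangulated category contains only three axioms beyond functoriality on isomorphisms: naturality of $f_2$ in isomorphisms of triangles, an \emph{associativity} axiom attached to octahedra, and a \emph{commutativity} axiom attached to rotated triangles. The $3\times 3$ compatibility you invoke is precisely the content of the lemma, not an axiom one may cite; so your argument is essentially assuming what has to be proved.

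The paper's proof does the actual work your proposal skips: it subdivides the diagram into six cells and verifies each one separately. Four of the cells (labelled 1, 2, 4, 5) are instances of the associativity axiom, each coming from an explicit octahedron built from the objects $A,A',B,B',C,C'$ and their direct sums (for example, cell~1 uses the octahedron with rows $A\oplus A'\to B\oplus A'\to C$ and $A\oplus A'\to B\oplus B'\to C\oplus C'$). One cell (labelled 6) is an instance of the commutativity axiom, which is exactly where the ``flip'' in the statement is absorbed, and one cell (labelled 3) is trivial. Finding these four octahedra and checking that the cells tile the outer diagram is the substance of the argument; your proposal would need to supply this, or else locate in the literature a proven $3\times 3$ lemma for determinant functors in Breuning's sense and cite that instead of the definition.
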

We will follow below the convention that unmarked arrows will be obtained by functoriality of the determinant, applied to a distinguished triangle, and will go from the middle term to the product of the outer terms. We denote the product by concatenation for compactness.

\begin{proof}
	Consider the following diagram:
	\[
	\begin{tikzcd}
		& {[B\oplus B']} \arrow[ld] \arrow[rd] \arrow[rr] \arrow[dd, "1", phantom] \arrow[rrrd, "2", phantom] &                                                                              & {[B][B']} \arrow[rd]                                                &                                                   \\
		{[A\oplus A'][C\oplus C']} \arrow[rd] \arrow[dd] \arrow[rdd, "3", phantom, shift right=3] &                                                                                                     & {[B\oplus A'][C']} \arrow[ld] \arrow[rr] \arrow[rd] \arrow[dd, "4", phantom] &                                                                     & {[B][A'][C']} \arrow[dd] \arrow[ld, "5", phantom] \\
		& {[A\oplus A'][C][C']} \arrow[rd]                                                                    &                                                                              & {[A][A'\oplus C][C']} \arrow[rd] \arrow[ld] \arrow[d, "6", phantom] &                                                   \\
		{[A][A'][C\oplus C']} \arrow[rr]                                                          & {}                                                                                                  & {[A][A'][C][C']} \arrow[rr,swap, "flip"]                                          & {}                                                                  & {[A][C][A'][C']}                                 
	\end{tikzcd}\]
	The outer diagram is exactly the one we want to prove to be commutative. Square 1 commutes by associativity, applied to the octahedron diagram
	\[
		\begin{tikzcd}
		A\oplus A' \arrow[d, equal] \arrow[r] & B\oplus A' \arrow[d] \arrow[r]    & C \arrow[d]          \\
		A\oplus A' \arrow[r]                                & B\oplus B' \arrow[r] \arrow[d]    & C\oplus C' \arrow[d] \\
		& C' \arrow[r, equal] & C'                  
		\end{tikzcd}
	\]
	Square 2 similarly commutes by the octahedron diagram
	\[
		\begin{tikzcd}
		B \arrow[d, equal] \arrow[r] & B\oplus A' \arrow[d] \arrow[r]    & A' \arrow[d]          \\
		B \arrow[r]                                & B\oplus B' \arrow[r] \arrow[d]    & B' \arrow[d] \\
		& C' \arrow[r, equal] & C'                  
		\end{tikzcd}
	\]
	Square 3 is trivially commutative. Square 4 is obtained by tensoring on the right with $[C']$ the associativity diagram coming from the following octahedron:
	\[
		\begin{tikzcd}
		A \arrow[d, equal] \arrow[r] & A\oplus A' \arrow[d] \arrow[r]    & A' \arrow[d]          \\
		A \arrow[r]                                & B\oplus A' \arrow[r] \arrow[d]    & A'\oplus C \arrow[d] \\
		& C \arrow[r, equal] & C                  
		\end{tikzcd}
	\]
	Square 5 is obtained by tensoring on the right with $[C']$ the associativity diagram coming from the following octahedron:
	\[
	\begin{tikzcd}
	A \arrow[d, equal] \arrow[r] & B \arrow[d] \arrow[r]    & C \arrow[d]          \\
	A \arrow[r]                                & B\oplus A' \arrow[r] \arrow[d]    & A'\oplus C \arrow[d] \\
	& A' \arrow[r, equal] & A'                  
	\end{tikzcd}
	\]
	Finally, square 6 is commutative by the commutativity axiom.
\end{proof}

\subsection{Computations}

We now use the results from section \ref{computations} and appendix A to compute the Weil-étale characteristic.

\begin{defi}[The regulator]\label{regulator}
	Let $F$ bas a $\ZZ$-constructible sheaf. Fix bases modulo torsion of $H^0_c(U,F)$, $H^1_c(U,F)$, $\Ext^1_U(F,\GG_m)$ and $\Hom_U(F,\GG_m)$. Let $R_0(F)$ be the absolute value of the determinant of the pairing
	\[
	H^0_c(U,F)_\RR \times \Ext^1_U(F,\GG_m)_\RR \to \RR
	\]
	and $R_1(F)$ the absolute value of the determinant of the pairing
	\[
	H^1_c(U,F)_\RR \times \Hom_U(F,\GG_m)_\RR \to \RR
	\]
	in those bases. Those quantities do not depend on the choices, and we define the regulator $R(F)$ of $F$:
	\[
	R(F):=\frac{R_1(F)}{R_0(F)}
	\]
\end{defi}

\begin{prop}\label{euler_vs_big}
Let $F$ be a big sheaf. Then
\[
    \chi_U(F)=\frac{R(F)[\Ext^1_U(F,\GG_m)][H^0_c(U,F)]}{[\Hom_U(F,\GG_m)_{tor}][H^1_c(U,F)_{tor}]}
\]
\end{prop}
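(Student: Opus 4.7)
\textbf{Plan for the proof of Proposition~\ref{euler_vs_big}.}

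The plan is to compute $\lambda_F(f_\ZZ(R\Gamma_{W,c}(U,F)))\subset \RR$ directly by unfolding the determinant along cohomology, exploiting the drastic simplifications that bigness of $F$ produces.

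First, I specialize the structural results of Section~\ref{computations}. Since $\Ext^1_U(F,\GG_m)$ is finite, $H^2(D_F) = \Ext^1_U(F,\GG_m)^\dagger = 0$, so rationally $D_F$ is concentrated in degree $3$ with value $\Hom_U(F,\GG_m)^\dagger$. The long exact sequence then collapses to $H^0_{W,c}(U,F) = H^0_c(U,F)$, $H^1_{W,c}(U,F) = H^1_c(U,F)$, $H^3_{W,c}(U,F) = \Hom_U(F,\GG_m)_{tor}^D$ (finite), and a short exact sequence $0\to \Ext^1_U(F,\GG_m)^D \to H^2_{W,c}(U,F)\to \Hom(\Hom_U(F,\GG_m),\ZZ)\to 0$. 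Moreover Theorem~\ref{regulator_perfect} in degree $0$ implies that $H^0_c(U,F)$ is finite, and in degree $1$ yields $\mathrm{rank}_\ZZ H^1_c(U,F) = \mathrm{rank}_\ZZ\Hom_U(F,\GG_m)$; in particular $R_0(F)$ is an empty determinant equal to $1$, so $R(F) = R_1(F)$.

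Using the equivalence $H^\ast$ from Section~\ref{Construction}, I identify $f_\ZZ(R\Gamma_{W,c}(U,F))$ with $\bigotimes_i (\det_\ZZ H^i_{W,c}(U,F))^{(-1)^i}$. For any finite type $\ZZ$-module $M$ with torsion subgroup $T$, the canonical embedding $\det_\ZZ M \hookrightarrow \det_\RR M_\RR$ has image $[T]^{-1}\cdot\det_\ZZ M_{free}$. Collecting the alternating torsion contributions of the four cohomology groups yields the combined torsion factor $\tfrac{[H^1_c(U,F)_{tor}]\,[\Hom_U(F,\GG_m)_{tor}]}{[H^0_c(U,F)]\,[\Ext^1_U(F,\GG_m)]}$. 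For the free parts, $\lambda_F$ equals $f_\RR(\tau_F)$ followed by the canonical trivialization $\det_\RR(D_{F,\RR}[1])\otimes \det_\RR(D_{F,\RR}[2])\simeq \RR$, which arises from the null triangle $D_{F,\RR}\to 0\to D_{F,\RR}[1]$ and amounts to the natural pairing $L\otimes L^{-1}\to \RR$. Under $\tau_F$ the free part of $H^1_{W,c}(U,F)_\RR = H^1_c(U,F)_\RR$ is sent to $H^1(D_{F,\RR}[2]) = \Hom(\Hom_U(F,\GG_m),\RR)$ via the regulator, with determinant of absolute value $R(F)$ in fixed integer bases, while the free part of $H^2_{W,c}(U,F)_\RR = \Hom(\Hom_U(F,\GG_m),\ZZ)_\RR$ maps tautologically to $H^2(D_{F,\RR}[1]) = \Hom(\Hom_U(F,\GG_m),\RR)$ with determinant $1$ in the natural bases. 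The trivialization then produces a global free factor of $R(F)^{-1}$, and multiplying with the torsion factor gives $\lambda_F(f_\ZZ(R\Gamma_{W,c}(U,F))) = \chi_U(F)^{-1}\ZZ$ with $\chi_U(F)$ as claimed.

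The main technical obstacle is the careful bookkeeping of signs on graded lines: one must verify that for $T$ finite, $\det_\ZZ T \subset \det_\QQ T = \QQ$ has image $[T]^{-1}\ZZ$, so that each torsion order appears with the expected exponent in the alternating product; and that, in view of $D_{F,\RR}[1]$ (even shift) contributing $\det\Hom(\Hom_U(F,\GG_m),\RR)$ while $D_{F,\RR}[2]$ (odd shift) contributes its inverse, the regulator determinant ends up with exponent $-1$ rather than $+1$ in the free contribution.
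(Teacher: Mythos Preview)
Your proposal is correct and follows essentially the same approach as the paper: the paper defers the proof to the determinant computations of Appendix~A, whose main proposition states that for a perfect complex $M$ with a trivialization $\phi:H^{ev}(M_\RR)\to H^{od}(M_\RR)$, the lattice $\det_\ZZ M\subset\RR$ equals $\det(\phi)\cdot\prod_i[H^i(M)_{tor}]^{(-1)^{i+1}}\ZZ$, and you reproduce exactly that computation specialized to $M=R\Gamma_{W,c}(U,F)$ for big $F$. Your identification of the degree-$1$ component of $\tau_F$ with the regulator map (determinant $R_1(F)=R(F)$) and of the degree-$2$ component with the tautological map $H^2_{W,c}/\mathrm{tor}\simeq\Hom(\Hom_U(F,\GG_m),\ZZ)\hookrightarrow\Hom_U(F,\GG_m)^\dagger_\RR$ (determinant $1$) is precisely what the paper's construction of $\tau_F$ via the section $s_F$ and the regulator yields.
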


\begin{prop}\label{euler_vs_tiny}
Let $F$ be a tiny sheaf. Then
\[
\chi_U(F)=\frac{R(F)[\Ext^1_U(F,\GG_m)_{tor}][H^0_c(U,F)_{tor}]}{[\Hom_U(F,\GG_m)][H^1_c(U,F)]} 
\]
\end{prop}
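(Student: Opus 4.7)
The plan is to compute $\chi_U(F)$ directly from its definition: exhibit an explicit $\ZZ$-generator of $f_\ZZ(R\Gamma_{W,c}(U,F))$ inside $f_\RR(R\Gamma_{W,c}(U,F)_\RR)$, evaluate $\lambda_F$ on it, and take the absolute value. For a tiny $F$, the Weil-étale complex has two rank-$n$ cohomology groups (where $n:=\mathrm{rank}_\ZZ\Ext^1_U(F,\GG_m)$) and two finite ones, which simplifies things substantially.

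First I would unpack the tiny hypothesis. By Theorem \ref{regulator_perfect}, the finiteness of $H^1_c(U,F)$ forces $\Hom_U(F,\GG_m)$ to be finite as well; in particular the pairing defining $R_1(F)$ is empty, so $R(F)=R_0(F)^{-1}$. Combining this with the computations of section \ref{computations}, I obtain the description:
$H^0_{W,c}(U,F)=H^0_c(U,F)$ has torsion $H^0_c(U,F)_{tor}$ and rank $n$;
$H^1_{W,c}(U,F)\simeq H^1_c(U,F)\oplus \Hom(\Ext^1_U(F,\GG_m),\ZZ)$ (splitting because the free quotient is free), with torsion $H^1_c(U,F)$ and rank $n$;
$H^2_{W,c}(U,F)=(\Ext^1_U(F,\GG_m)_{tor})^D$ and $H^3_{W,c}(U,F)=\Hom_U(F,\GG_m)^D$ are finite, of orders $[\Ext^1_U(F,\GG_m)_{tor}]$ and $[\Hom_U(F,\GG_m)]$ respectively.

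Next I would choose bases $(e_i)$ of $H^0_c(U,F)$ and $(f_j)$ of $\Ext^1_U(F,\GG_m)$ modulo torsion, and use the dual basis $(f_j^*)$ inside $L:=\Hom(\Ext^1_U(F,\GG_m),\ZZ)$ to write down the free-part generator of $H^1_{W,c}(U,F)$ modulo torsion. Using the Knudsen--Mumford convention that $\det_\ZZ T$ embeds into $\RR$ as $[T]^{-1}\ZZ$ for a finite group $T$, the alternating product of determinants yields the generator
\[
\xi \;=\; \frac{[H^1_c(U,F)]\,[\Hom_U(F,\GG_m)]}{[H^0_c(U,F)_{tor}]\,[\Ext^1_U(F,\GG_m)_{tor}]}\cdot (e_1\wedge\cdots\wedge e_n)\otimes (f_1^*\wedge\cdots\wedge f_n^*)^{-1}
\]
of $f_\ZZ(R\Gamma_{W,c}(U,F))$ inside $\det_\RR H^0_c(U,F)_\RR\otimes \det_\RR H^1_{W,c}(U,F)_\RR^{-1}$ (the terms from $H^2_{W,c}$, $H^3_{W,c}$ contribute only to the scalar prefactor, since their $\RR$-determinants are canonically $\RR$).

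Finally I would evaluate $\lambda_F$ on $\xi$ by tracing through the definition of $\tau_F$. In degree $0$, $\tau_F$ is, up to the canonical identification of $D_{F,\RR}[2]$ with $R\Gamma_c(U,F)_\RR$, the regulator isomorphism $H^0_c(U,F)_\RR\xrightarrow{\sim}\Ext^1_U(F,\GG_m)^\dagger_\RR$, whose determinant in the bases $(e_i)$ and $(f_j^\dagger)$ is $\pm R_0(F)$ by the definition \ref{regulator}. In degree $1$, since $\Hom_U(F,\GG_m)^\dagger_\RR=0$, only the $D_{F,\RR}[1]$ summand survives, and $\tau_F$ restricted to $L_\RR$ is the tautological identification $L\otimes\RR=\Hom(\Ext^1_U(F,\GG_m),\RR)=\Ext^1_U(F,\GG_m)^\dagger_\RR$, so $f_j^*\mapsto f_j^\dagger$. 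The wedge products cancel in the tensor $\det\otimes\det^{-1}$, leaving the scalar
\[
|\lambda_F(\xi)|\;=\;R_0(F)\cdot\frac{[H^1_c(U,F)]\,[\Hom_U(F,\GG_m)]}{[H^0_c(U,F)_{tor}]\,[\Ext^1_U(F,\GG_m)_{tor}]}=\chi_U(F)^{-1},
\]
which after inversion and substitution of $R(F)=R_0(F)^{-1}$ gives exactly the claimed formula.

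The only non-routine step is the third one: keeping track of the naturality of Knudsen--Mumford's determinants so that the trivialization $\lambda_F$, built abstractly from $\tau_F$ and the contracting isomorphism $\det_\RR C\otimes\det_\RR C[1]\simeq\RR$, actually evaluates on the free parts to $\pm R_0(F)$ in the chosen bases. Everything else is straightforward bookkeeping of torsion cardinalities based on the results of sections 2--3.
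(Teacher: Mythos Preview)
Your proof is correct and follows essentially the same approach as the paper: the paper defers the computation to the general determinant identity in Appendix~A (for a perfect complex $M$ with a trivialization $\phi:H^{ev}(M_\RR)\xrightarrow{\sim}H^{od}(M_\RR)$, the lattice $\det_\ZZ M\subset\RR$ is $\det(\phi)\cdot\prod_i[H^i(M)_{tor}]^{(-1)^{i+1}}\ZZ$), while you reprove that identity by hand in this specific case. The identification of $\det(\phi)$ with $\pm R_0(F)$ via $\tau_F$ and the torsion bookkeeping from section~\ref{computations} are exactly the ingredients the paper has in mind.
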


\begin{cor}\label{euler_vs_point}
Let $i:v\hookrightarrow U$ be the inclusion of a closed point of $U$ and $M$ be a finitely generated discrete $G_v$-module. Then $i_\ast M$ is tiny and
    \[
    \chi_U(i_\ast M)= \frac{[H^0(G_v,M)_{tor}]}{[H^1(G_v,M)]R(M)(\log N(v))^{\mathrm{rank}_\ZZ(H^0(G_v,M))}}
    \]
    where $N(v)=|\kappa(v)|$ and $R(M)$ is the determinant of the pairing
    \[
    M^{G_v}_\QQ\times \Hom_{G_v}(M,\QQ)\to \QQ
    \]
    after a choice of bases modulo torsion of $M^{G_v}$ and $\Hom_{G_v}(M,\ZZ)$.
\end{cor}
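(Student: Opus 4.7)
The plan is to apply Proposition \ref{euler_vs_tiny} to $i_\ast M$ and then identify every ingredient in terms of Galois cohomology of $G_v$. First, I would verify that $i_\ast M$ is tiny: by Proposition \ref{milne_II.2.3}.c we have $H^1_c(U, i_\ast M) = H^1(G_v, M)$, which is finite by the argument given in the proof of Proposition \ref{compact_support_finite_type}.

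Next I would compute the four groups appearing in Proposition \ref{euler_vs_tiny}. Using Proposition \ref{milne_II.2.3}.c we get $H^0_c(U, i_\ast M) = M^{G_v} = H^0(G_v, M)$ and $H^1_c(U, i_\ast M) = H^1(G_v, M)$. Using the $(i_\ast, Ri^!)$ adjunction and the identification $Ri^! \GG_m \simeq \ZZ[-1]$ recalled in \eqref{right_adjoint_closed_immersion}, we obtain
\[
R\Hom_U(i_\ast M, \GG_m) \simeq R\Hom_v(M, \ZZ[-1]) = R\Hom_{G_v}(M, \ZZ)[-1],
\]
so $\Hom_U(i_\ast M, \GG_m) = 0$ and $\Ext^1_U(i_\ast M, \GG_m) = \Hom_{G_v}(M, \ZZ)$. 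Since $\Hom_{G_v}(M, \ZZ) \subset \Hom_\ZZ(M, \ZZ)$ is torsion-free, we get $[\Ext^1_U(i_\ast M, \GG_m)_{tor}] = 1$, and by convention $[\Hom_U(i_\ast M, \GG_m)] = 1$. Proposition \ref{euler_vs_tiny} then reduces to
\[
\chi_U(i_\ast M) = \frac{R(i_\ast M)\,[(M^{G_v})_{tor}]}{[H^1(G_v, M)]}.
\]

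The main step is to compute $R(i_\ast M) = R_1(i_\ast M)/R_0(i_\ast M)$. Since both $H^1(G_v, M)$ and $\Hom_U(i_\ast M, \GG_m) = 0$ have trivial $\RR$-tensor, the pairing defining $R_1(i_\ast M)$ is trivial, hence $R_1(i_\ast M) = 1$. For $R_0(i_\ast M)$ I would use diagram \eqref{regulateur_point}: up to the isomorphisms identifying $H^0_c(U, i_\ast M) = M^{G_v}$ and $\Ext^1_U(i_\ast M, \GG_m) = \Hom_{G_v}(M, \ZZ)$, the regulator pairing at degrees $(0,1)$ coincides with the Yoneda evaluation pairing $(m,\varphi) \mapsto \varphi(m)$ scaled uniformly by $-\log N(v)$. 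Setting $r := \mathrm{rank}_\ZZ H^0(G_v, M) = \mathrm{rank}_\ZZ \Hom_{G_v}(M, \ZZ)$ (the two ranks agreeing by the argument in the proof of Theorem \ref{regulator_perfect}), the matrix of the regulator pairing in fixed integral bases modulo torsion is $(-\log N(v))$ times the matrix defining $R(M)$, hence
\[
R_0(i_\ast M) = (\log N(v))^r R(M).
\]

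Substituting $R(i_\ast M) = 1/\bigl((\log N(v))^r R(M)\bigr)$ into the displayed formula for $\chi_U(i_\ast M)$ yields
\[
\chi_U(i_\ast M) = \frac{[H^0(G_v, M)_{tor}]}{[H^1(G_v, M)]\,R(M)\,(\log N(v))^r},
\]
which is the stated equality. The only genuinely non-bookkeeping step is the identification of $R_0(i_\ast M)$ via \eqref{regulateur_point}; everything else is an assembly of the cohomology computations already available.
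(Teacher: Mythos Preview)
Your proof is correct and is exactly the argument the paper intends: apply Proposition \ref{euler_vs_tiny}, identify the four groups via Proposition \ref{milne_II.2.3}.c and the adjunction with $Ri^!\GG_m\simeq\ZZ[-1]$, and read off $R_0(i_\ast M)=(\log N(v))^r R(M)$ from diagram \eqref{regulateur_point}. The paper leaves this as a corollary with only the remark that the $(\log N(v))$ factor comes from \eqref{regulateur_point}, and your write-up fills in precisely those details.
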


\begin{rmk}
\begin{itemize}
	\item[]
	\item Note the factor $(\log N(v))^{\mathrm{rank}_\ZZ(H^0(G_v,M))}$ which comes from diagram \eqref{regulateur_point}; there is no sign here because we take absolute values.
	\item The value $\chi_v(M):=\chi_U(i_\ast M)$ doesn't depend on the embedding $i:v\to U$ hence $\chi_v$ is intrinsic to the scheme $v$.
\end{itemize}
\end{rmk}

\begin{prop}\label{euler_vs_zeta}
Suppose $U=\Spec{\mathcal{O}_{K,S}}$ and denote $R_S$, $h_S$ and $\omega$ the $S$-regulator, $S$-class number and number of roots of unity of $K$ respectively. We have
\begin{align*}
    \chi_U(\ZZ)&=\frac{h_SR_S}{\omega} \\
    E_U(\ZZ) &= \mathrm{rank_\ZZ}{\mathcal{O}_{K,S}^\times}=[S]-1
\end{align*}
We obtain from the analytic class number formula for the $S$-zeta function of $K$ (the arithmetic zeta function of $U$) that $E_U(\ZZ)=\mathrm{ord}_{s=0}\zeta_{K,S}$ and the formula for the special value $\lim_{s\to 0} s^{-E_U(\ZZ)}\zeta_{K,S}(s)=-\chi_U(\ZZ)$.\footnote{see for instance \cite[I.2.2]{Tate84}}
\end{prop}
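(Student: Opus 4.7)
The plan is to apply Proposition \ref{euler_vs_big} directly to the constant sheaf $\ZZ$, which is big on $U = \Spec(\mathcal{O}_{K,S})$ since $U$ is not a proper curve (see the remark following Definition \ref{def_big_tiny}). The strategy is to identify each term in the formula with a classical invariant and then invoke the analytic class number formula.

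First, I would identify the groups appearing in the formula. Using standard facts: $\Ext^1_U(\ZZ,\GG_m)=H^1(U,\GG_m)=\mathrm{Pic}(U)$, which is the $S$-ideal class group of $K$, of cardinality $h_S$; and $\Hom_U(\ZZ,\GG_m)=H^0(U,\GG_m)=\mathcal{O}_{K,S}^\times$, whose torsion subgroup is the group of roots of unity in $K$, of cardinality $\omega$. From equation \eqref{cohomomology_compact_Z} we have $H^0_c(U,\ZZ)=0$ and $H^1_c(U,\ZZ)=(\prod_{v\in S}\ZZ)/\ZZ$, torsion-free of rank $[S]-1$; in particular $[H^0_c(U,\ZZ)]=[H^1_c(U,\ZZ)_{tor}]=1$.

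Second, I would compute $R(\ZZ) = R_1(\ZZ)/R_0(\ZZ)$. Since $H^0_c(U,\ZZ)_\RR=0$, the pairing defining $R_0(\ZZ)$ is trivial and $R_0(\ZZ)=1$. For $R_1(\ZZ)$, the regulator pairing on the constant sheaf $\ZZ$ was made explicit in the proof of Theorem \ref{regulator_perfect} (case of the constant sheaf $\ZZ$):
\[
(\prod_{v\in S}\ZZ)/\ZZ \times \mathcal{O}_{K,S}^\times \to \RR,\qquad ((\alpha_v),u)\mapsto \sum_{v\in S}\alpha_v\log|u|_v.
\]
Choosing for $(\prod_{v\in S}\ZZ)/\ZZ$ the basis given by the images of $[S]-1$ of the coordinate vectors, and for $\mathcal{O}_{K,S}^\times/\mu_K$ a fundamental system of $S$-units, the absolute value of the determinant is by definition the classical Dirichlet $S$-regulator $R_S$. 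Hence $R(\ZZ)=R_S$.

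Third, substituting into the formula of Proposition \ref{euler_vs_big} yields $\chi_U(\ZZ)=h_S R_S/\omega$. The rank computation follows from the remark after the definition of $E_U$: $E_U(\ZZ)=\mathrm{rank}_\ZZ H^1_c(U,\ZZ)-\mathrm{rank}_\ZZ H^0_c(U,\ZZ)=([S]-1)-0=[S]-1$, which equals $\mathrm{rank}_\ZZ \mathcal{O}_{K,S}^\times$ by Dirichlet's $S$-units theorem. Finally, the compatibility with the zeta function is exactly the classical analytic class number formula at $s=0$, namely $\zeta_{K,S}(s)\sim -\frac{h_S R_S}{\omega}s^{[S]-1}$, giving $\mathrm{ord}_{s=0}\zeta_{K,S}=E_U(\ZZ)$ and $\lim_{s\to 0}s^{-E_U(\ZZ)}\zeta_{K,S}(s)=-\chi_U(\ZZ)$.

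There is no real obstacle here; the only subtle point is matching the sign and normalization conventions of $R(\ZZ)$ to those of the classical $S$-regulator $R_S$, but this is purely a bookkeeping check between the pairing described in the proof of Theorem \ref{regulator_perfect} and the standard logarithmic embedding.
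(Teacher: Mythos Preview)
Your proposal is correct and follows essentially the same approach as the paper: both identify $\Ext^1_U(\ZZ,\GG_m)=\mathrm{Pic}(U)$, $\Hom_U(\ZZ,\GG_m)_{tor}=\mu(K)$, $H^0_c(U,\ZZ)=0$, and $H^1_c(U,\ZZ)=(\prod_{v\in S}\ZZ)/\ZZ$, recognize the regulator pairing for $\ZZ$ as the classical logarithmic embedding so that $R(\ZZ)=R_S$, and then plug into the formula of Proposition \ref{euler_vs_big}. The only difference is cosmetic: you invoke Proposition \ref{euler_vs_big} explicitly while the paper simply writes out the resulting computation.
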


\begin{proof}
In the construction of the regulator pairing, we noticed that for $F=\ZZ$ it simply identifies with the classical regulator ; hence $R(\ZZ)$ is the $S$-regulator $R_S$ of $\mathcal{O}_{K,S}$. Moreover, we have
\begin{align*}
    \Hom_U(\ZZ,\GG_m)_{tor}=(\mathcal{O}_{K,S}^\times)_{tor}=\mu(K)\\
    \Ext^1_U(\ZZ,\GG_m)=H^1_{et}(U,\GG_m)=\mathrm{Pic}(U)
\end{align*}
Finally, the long exact sequence \eqref{long_sequence_compact_support} gives
\[
0 \to H^0_c(U,\ZZ) \to \ZZ \xrightarrow{\Delta} \prod_{v\in S}\ZZ \to H^1_c(U,\ZZ) \to H^1_{et}(U,\ZZ)
\]
where $\Delta$ is the diagonal inclusion. We already saw that $H^1_{et}(U,\ZZ)=0$. Thus $H^1_c(U,\ZZ)\simeq \prod_{v\in S}\ZZ/\ZZ$ is free and $H^0_c(U,\ZZ)=0$. We obtain
\begin{align*}
E_U(F)&=\mathrm{rank}_\ZZ(\Hom_U(\ZZ,\GG_m))-\mathrm{rank}_\ZZ(Ext^1_U(\ZZ,\GG_m))=\mathrm{rank}_\ZZ(\mathcal{O}_{K,S}^\times)\\
\chi_U(F)&=\frac{R_S[\mathrm{Pic}(U)]}{[\mu(K)]}=\frac{h_SR_S}{\omega}.
\end{align*}

\end{proof}

\begin{prop}\label{euler_vs_zeta_proper_curve}
Let $U=C$ be the smooth proper curve associated to the function field $K$, and let $\FF_q$ be its field of constants. We have
\begin{align*}
\chi_C(\ZZ) &= \frac{h}{\omega\log(q)}\\
E_C(\ZZ) &= -\mathrm{rank}_\ZZ \mathrm{Pic}(C) = -1
\end{align*}
where $h=[\mathrm{Pic}^0(C)]$ is the cardinality of the group of classes of degree $0$ divisors and $\omega$ the number of roots of unity of $K$.\footnote{Note that $\omega=q-1$} We verify again that $\mathrm{ord}_{s=0}\zeta_{C}=E_C(\ZZ)$ and $\zeta_C^\ast(0)=-\chi_C(\ZZ)$ for $\zeta_C$ the arithmetic zeta function of $C$.\footnote{see for instance \cite[VII.6, Proposition 4]{Weil}}
\end{prop}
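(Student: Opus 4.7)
The plan is to apply Proposition \ref{euler_vs_tiny}, after first noting that $\ZZ$ on the proper curve $C$ is tiny: indeed, as remarked after Definition \ref{def_big_tiny}, $H^1_c(C,\ZZ) = H^1(C,\ZZ) = \Hom_{cont}(\pi_1(C),\ZZ) = 0$. The task then reduces to computing each factor in the formula of that proposition.

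First I would compute the cohomological inputs. Since $S = \emptyset$, $R\Gamma_c(C,-) = R\Gamma(C,-)$, giving $H^0_c(C,\ZZ) = \ZZ$ (torsion-free, so $[H^0_c(C,\ZZ)_{tor}] = 1$) and $H^1_c(C,\ZZ) = 0$. On the Ext side, $\Hom_C(\ZZ,\GG_m) = H^0(C,\GG_m) = \mathcal{O}_C(C)^\times = \FF_q^\times$ has order $\omega = q-1$, where I use that $\FF_q$ is the field of constants, and $\Ext^1_C(\ZZ,\GG_m) = H^1(C,\GG_m) = \mathrm{Pic}(C)$ has torsion subgroup $\mathrm{Pic}^0(C)$ of order $h$ via the exact sequence $0 \to \mathrm{Pic}^0(C) \to \mathrm{Pic}(C) \xrightarrow{\deg} \ZZ \to 0$.

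Next I would compute the regulator. The degree $1$ contribution is trivial between zero-dimensional vector spaces, so $R_1(\ZZ) = 1$. For $R_0(\ZZ)$, I reuse the identification made in the proof of Theorem \ref{regulator_perfect}, case of $\ZZ$ on the proper curve: the rational regulator pairing $\QQ \times \mathrm{Pic}(C)_\QQ \to \QQ$ identifies with multiplication $\QQ \times \QQ \to \QQ$ via $\mathrm{id} \times \deg_\QQ$, and the real regulator pairing is obtained from the rational one by post-composing with $1 \mapsto \log(q)$ (this is the comparison between $tr$ and $tr_q$). Taking the generator $1 \in H^0_c(C,\ZZ)$ together with a degree-one divisor class as bases modulo torsion (possible since the degree map is surjective, $\FF_q$ being exactly the field of constants), one obtains $R_0(\ZZ) = \log(q)$, whence $R(\ZZ) = R_1(\ZZ)/R_0(\ZZ) = 1/\log(q)$.

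Assembling via Proposition \ref{euler_vs_tiny},
\[
\chi_C(\ZZ) = \frac{R(\ZZ) \cdot [\mathrm{Pic}^0(C)] \cdot 1}{[\FF_q^\times] \cdot 1} = \frac{h}{\omega \log(q)}.
\]
For the rank, the formula in the remark following the definition of $E_U$ yields $E_C(\ZZ) = \mathrm{rank}_\ZZ \Hom_C(\ZZ,\GG_m) - \mathrm{rank}_\ZZ \Ext^1_C(\ZZ,\GG_m) = 0 - 1 = -1$. Finally, the verification that $\mathrm{ord}_{s=0}\zeta_C = -1$ and $\zeta_C^\ast(0) = -h/(\omega\log q)$ is a classical consequence of the rationality of $\zeta_C$ and its functional equation, which I would simply cite from \cite[VII.6, Proposition 4]{Weil}. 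The only delicate point in this plan is to correctly track the factor $\log(q)$ arising from the comparison $tr = \log(q) \cdot tr_q$ on $\mathrm{Pic}(C)$; once this is in place, everything else is a direct unwinding of definitions.
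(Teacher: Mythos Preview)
Your proposal is correct and follows essentially the same approach as the paper: both identify $\ZZ$ as tiny, apply Proposition~\ref{euler_vs_tiny}, compute the same cohomological inputs ($H^0_c=\ZZ$, $H^1_c=0$, $\Hom=\FF_q^\times$, $\Ext^1=\mathrm{Pic}(C)$ with torsion $\mathrm{Pic}^0(C)$), and obtain $R(\ZZ)=1/\log(q)$ by comparing the real and rational regulator pairings via the degree map. Your treatment is slightly more explicit in separating $R_0$ and $R_1$ and in justifying the choice of integral bases through surjectivity of the degree map, but the argument is the same.
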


\begin{proof}
The constant sheaf $\ZZ$ is tiny on $C$ because $H^1(C,\ZZ)=0$, so we use proposition \ref{euler_vs_tiny}. We consider $C$ as a curve over the field of constants $\FF_q$ of its function field, hence the degree map is surjective \cite[VII.5, cor. 5]{Weil} and we have an exact sequence 
		\[0 \to \mathrm{Pic}^0(C) \to \mathrm{Pic}(C) \xrightarrow{deg} \ZZ \to 0\]
	with $\mathrm{Pic}^0(C)$ the class group of degree zero divisors, which is finite by \cite[IV.4 Thm 7]{Weil}. We have $H^0(C,\ZZ)=\ZZ$ and $H^0(C,\GG_m)=\FF_q^\times$.  We obtain
	\[
		\chi_U(\ZZ)=\frac{R(\ZZ)[Pic^0(C)]}{q-1}
	\]
	Finally, by the discussion in paragraph \ref{case_Z}, the rational regulator pairing for $C$ identifies via the degree map to the trivial pairing $\QQ 			\times \QQ \to \QQ$ with the natural integral bases, hence the comparison between the regulator pairing and the rational regulator pairing gives $R(\ZZ)=1/\mathrm{log}(q)$.
\end{proof}

\subsection{Functoriality of the Euler characteristic}\label{functoriality_euler_characteristic}

\begin{prop}
Let $L/K$ be a finite Galois extension and $\pi:V \to U$ the normalization of $U$ in $L$. If $F$ is a big or tiny sheaf on $V$, so is $\pi_\ast F$ and
\begin{align*}
    \chi_U(\pi_\ast F) &= \chi_V(F)\\
    E_U(\pi_\ast F) &= E_V(F)
\end{align*}
\end{prop}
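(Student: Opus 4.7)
The plan is to reduce everything to the corresponding facts for the Weil-étale complex, which by Theorem A should satisfy $R\Gamma_{W,c}(U,\pi_\ast F)\simeq R\Gamma_{W,c}(V,F)$, and then check that this isomorphism is compatible with the trivialization $\lambda$.

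First I would verify that big and tiny are preserved. By the norm isomorphism $N: R\Hom_V(F,\GG_m)\xrightarrow{\simeq} R\Hom_U(\pi_\ast F,\GG_m)$ recalled in the ``pushforward'' paragraph of the proof of Theorem \ref{regulator_perfect} (and originally from \cite[II.3.9]{ADT}), we get $\Ext^i_U(\pi_\ast F,\GG_m)\simeq \Ext^i_V(F,\GG_m)$ for all $i$; hence $\pi_\ast F$ is big iff $F$ is. Similarly, Proposition \ref{milne_II.2.3}.e gives $R\Gamma_c(U,\pi_\ast F)\simeq R\Gamma_c(V,F)$, so $H^1_c(U,\pi_\ast F)\simeq H^1_c(V,F)$ and $\pi_\ast F$ is tiny iff $F$ is.

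Next I would construct the isomorphism $R\Gamma_{W,c}(U,\pi_\ast F)\simeq R\Gamma_{W,c}(V,F)$ by comparing the defining triangles. Applying $R\Hom(-,\QQ[-3])$ to the norm isomorphism yields $D_{\pi_\ast F}\simeq D_F$, and combined with the isomorphism on $R\Gamma_c$ this produces a morphism of the defining triangles; one needs to check that the map $\beta_{\pi_\ast F}$ matches $\beta_V\circ(\text{iso})$, but by the uniqueness of $\beta$ established in Section \ref{construction_weil_etale} (via the spectral sequence argument giving $\Hom_{D(\ZZ)}(D_F,R\Gamma_c(V,F))\xrightarrow{\simeq}\prod_{i=2,3}\Hom(H^i(D_F),H^i_c(V,F))$), this reduces to checking compatibility on $H^2$ and $H^3$, which is immediate from the naturality of Artin–Verdier duality under $\pi_\ast$. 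By the $3\times 3$ lemma, we obtain the isomorphism $\phi: R\Gamma_{W,c}(U,\pi_\ast F)\xrightarrow{\simeq} R\Gamma_{W,c}(V,F)$.

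From $\phi$, the equality $E_U(\pi_\ast F)=E_V(F)$ is immediate since the Weil-étale cohomology groups are isomorphic. For the Euler characteristic, I would show that the trivializations $\lambda_{\pi_\ast F}$ and $\lambda_F$ correspond under $f_\RR(\phi_\RR)$. This follows from the commutative diagram in the ``pushforward'' paragraph of the proof of Theorem \ref{regulator_perfect}, which shows that the regulator pairing for $\pi_\ast F$ on $U$ matches the one for $F$ on $V$ under the norm isomorphism and $\pi_\ast$ on compactly supported cohomology. Therefore the splittings \eqref{splitting} agree under $\phi_\RR$, and determinants give the same trivialization up to the canonical isomorphism of integral lines $f_\ZZ(R\Gamma_{W,c}(U,\pi_\ast F))\simeq f_\ZZ(R\Gamma_{W,c}(V,F))$. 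Both compute the same lattice in $\RR$, so $\chi_U(\pi_\ast F)=\chi_V(F)$.

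The main obstacle, in my opinion, is bookkeeping: one must verify that all the relevant isomorphisms (norm, $\pi_\ast$ on $R\Gamma_c$, compatibility with Artin–Verdier duality, and with the trace map $tr$) assemble into a strictly commutative morphism of the two defining triangles and of the two splittings. Each square has already been checked in isolation in earlier sections, so the work is in assembling them; but the uniqueness results from Section \ref{functoriality_weil_etale_complex} (essentially that certain $\Hom$ groups are uniquely divisible \emph{and} finite type, hence zero) make this mostly automatic once one square on the cohomology of $D$ is checked.
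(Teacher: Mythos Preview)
Your proposal is correct and follows essentially the same route as the paper: both use the norm isomorphism on $R\Hom(-,\GG_m)$ and the identification $R\Gamma_c(U,\pi_\ast-)\simeq R\Gamma_c(V,-)$ to see that big/tiny is preserved, then invoke the uniqueness characterization of $\beta$ via its effect on cohomology to obtain a morphism of defining triangles, and finally use the compatibility of the regulator pairing with $\pi_\ast$ (established in paragraph \ref{cas_loc_const}) to conclude that the trivializations $\lambda$ match. One small quibble: the passage from the commuting square of $\beta$'s to an isomorphism on cones is just the axiom that morphisms of triangles can be completed (together with the uniqueness argument from Section \ref{functoriality_weil_etale_complex}), not the $3\times 3$ lemma.
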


\begin{proof}
As seen in paragraph \ref{cas_loc_const}, we have isomorphisms
\begin{align*}
    R\Hom_U(\pi_\ast F,\GG_m) &\simeq R\Hom_V(F,\GG_m)\\
    R\Gamma_c(U,\pi_\ast F) & \simeq R\Gamma_c(V,F)
\end{align*}
so the first assertion is immediate, as well as the equality $E_U(\pi_\ast F) = E_V(F)$. Similarly, there is also an isomorphism 
\[R\hat{\Gamma}_c(U,\pi_\ast F) \simeq R\hat{\Gamma}_c(V,F)\]
Write
\begin{align*}
    & D^U_F:=R\Hom(R\Hom_U(F,\GG_m),\QQ[-3])\\
    & E^U_F:=R\Hom(R\Hom_U(F,\GG_m),\QQ/\ZZ[-3])\\
    & \beta^U_F ~~\text{for the arrow constructed in section \ref{construction_weil_etale}}
\end{align*}
We have seen in paragraph \ref{cas_loc_const} that the above isomorphisms induce isomorphisms of the regulator pairing
\[
\begin{tikzcd}
R\Gamma_c(V, F)\otimes R\Hom_V(F,\GG_m) \dar["\simeq"] \rar & \RR[-1] \dar[equal]\\
R\Gamma_c(U,\pi_\ast F)\otimes R\Hom_U(\pi_\ast F,\GG_m) \rar & \RR[-1]
\end{tikzcd}
\]
and Artin-Verdier pairing
\[
\begin{tikzcd}
R\hat{\Gamma}_c(V, F)\otimes R\Hom_V(F,\GG_m) \dar["\simeq"] \rar & \QQ/\ZZ[-3] \dar[equal]\\
R\hat{\Gamma}_c(U,\pi_\ast F)\otimes R\Hom_U(\pi_\ast F,\GG_m) \rar & \QQ/\ZZ[-3]
\end{tikzcd}
\]
Thus there is a commutative diagram
\[
\begin{tikzcd}
D^V_{F} \rar \dar["\simeq"] & E^V_{ F} \dar["\simeq"] & R\hat{\Gamma}_c(V, F) \dar["\simeq"] \lar & R\Gamma_c(V, F) \lar \dar["\simeq"]\\
D^U_{\pi_\ast F} \rar & E^U_{\pi_\ast F} & R\hat{\Gamma}_c(U,\pi_\ast F) \lar & R\Gamma_c(U,\pi_\ast F) \lar 
\end{tikzcd}
\]
Since $\beta^U_{\pi_\ast F}$ and $\beta^V_F$ are defined uniquely by their cohomology in degree $3$ (and more generally morphisms from $D^V_F$ to $R\Gamma_c(U,\pi_\ast F)$ verify this property, by the same argument as for $\beta$), it follows that the following diagram is commutative
\[
\begin{tikzcd}
D^V_F \rar["\beta^V_F"] \dar["\simeq"] & R\Gamma_c(V,F) \dar["\simeq"]\\
D^U_{\pi_\ast F} \rar["\beta^U_{\pi_\ast F}"] & R\Gamma_c(V,F)
\end{tikzcd}
\]
Thus there exists an induced isomorphism on the cones, giving an isomorphism of distinguished triangles:
\[
\begin{tikzcd}
D^V_F \rar \dar["\simeq"] & R\Gamma_c(V,F) \dar["\simeq"] \rar & R\Gamma_{W,c}(V,F)\dar[dotted,"\simeq"] \\
D^U_{\pi_\ast F} \rar["\beta^U_{\pi_\ast F}"] & R\Gamma_c(V,F) \rar & R\Gamma_{W,c}(U,\pi_\ast F)
\end{tikzcd}
\]
The induced isomorphism is moreover unique, similarly to how we argued for the uniqueness of the complex $R\Gamma_{W,c}$. All the identifications we have made are compatible and we obtain $\chi_U(\pi_\ast F)=\chi_V(F)$ by taking determinants. 
\end{proof}

\begin{prop}\label{charac_extension_zero}
Let $j: V\subset U$ be an open immersion. If $F$ is a big or tiny sheaf on $V$, then so is $j_!F$, and we have
\begin{align*}
    \chi_U(j_! F) &= \chi_V(F)\\
    E_U(j_! F) &= E_V(F) \\
\end{align*}
\end{prop}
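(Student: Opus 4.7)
The plan is to follow exactly the strategy used in the previous proposition on $\pi_\ast$ for finite Galois covers, with the role of the pushforward along a finite generically Galois map now played by the extension by zero $j_!$ along the open immersion. All the necessary inputs are already established in the paper.

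First, I would use the adjunction $(j_!,j^\ast)$ together with the identity $j^\ast\GG_m=\GG_m$ to obtain a natural isomorphism $R\Hom_U(j_!F,\GG_m)\simeq R\Hom_V(F,\GG_m)$. Combined with the natural isomorphism $R\Gamma_c(U,j_!F)\simeq R\Gamma_c(V,F)$ of Proposition \ref{milne_II.2.3}.d (and the analogous isomorphism $R\hat{\Gamma}_c(U,j_!F)\simeq R\hat{\Gamma}_c(V,F)$, which follows by the same fiber-sequence argument applied to the comparison complex $T$), the claim that bigness/tinyness is preserved under $j_!$ is immediate, as is the equality $E_U(j_!F)=E_V(F)$, since the ranks are computed from the cohomology groups $H^i_c$ and $\Ext^i(-,\GG_m)$.

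Next, I would verify that these isomorphisms intertwine the Artin-Verdier and regulator pairings. For the regulator pairing, this is precisely the content of the commutative diagram written out in paragraph \ref{case_extension_zero}. For Artin-Verdier, compatibility with open immersions is standard and follows from the same functoriality of the Yoneda pairing. Consequently we get a commutative diagram
\[
\begin{tikzcd}
D^V_F \rar \dar["\simeq"] & E^V_F \dar["\simeq"] & R\hat{\Gamma}_c(V,F) \lar \dar["\simeq"] & R\Gamma_c(V,F) \lar \dar["\simeq"]\\
D^U_{j_!F} \rar & E^U_{j_!F} & R\hat{\Gamma}_c(U,j_!F) \lar & R\Gamma_c(U,j_!F) \lar
\end{tikzcd}
\]
By the same uniqueness argument used to construct $\beta$ (maps $D^V_F\to R\Gamma_c(V,F)$ are determined by their action in degree $3$), we conclude that $\beta^V_F$ and $\beta^U_{j_!F}$ are intertwined by the vertical isomorphisms. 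Taking cones yields a unique isomorphism $R\Gamma_{W,c}(U,j_!F)\simeq R\Gamma_{W,c}(V,F)$, and by the same uniqueness reasoning as in Theorem \ref{long_exact_cohomolgy_sequence} this isomorphism is canonical.

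Finally, taking determinants and using that the trivialization $\lambda$ is constructed canonically from the regulator pairing (which we have shown to be intertwined), we deduce $\chi_U(j_!F)=\chi_V(F)$. The main potential obstacle is merely bookkeeping to confirm that the chain of canonical isomorphisms is strictly compatible with the $\lambda$-trivialization under which the Euler characteristic is defined; but since each step matches the corresponding step in the previous proposition for $\pi_\ast$, no genuinely new argument is needed.
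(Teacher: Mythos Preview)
Your proposal is correct and matches the paper's own argument essentially verbatim: the paper simply states that the proof proceeds as the previous one, using Proposition~\ref{milne_II.2.3}, the adjunction $j_!\dashv j^\ast$, and the results from paragraph~\ref{case_extension_zero}. You have spelled out precisely these steps.
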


\begin{proof}
The proof proceeds as the previous one, using Proposition \ref{milne_II.2.3}, the adjunction $j_!\dashv j^\ast$ and the results from paragraph \ref{case_extension_zero}.
\end{proof}

\subsection{The \texorpdfstring{$L$}{L}-function of a \texorpdfstring{$\ZZ$}{Z}-constructible sheaf}

Let $X$ be an irreducible scheme of finite type over $\ZZ$ and of dimension $1$, and let $F$ be a $\ZZ$-constructible sheaf on $X$. We define the $L$-function associated to $F$ and go over its functorial properties.

\begin{defi}
Let $i:v\to X$ be a closed point of $X$. The local factor with respect to $F$ and $v$ is
\[L_v(F,s)=\det(I-N(v)^{-s}\mathrm{Frob}_v|F_{v}\otimes \CC)^{-1}\] where $F_v$ is the discrete $G_v:=\mathrm{Gal}(\kappa(v)^{sep}/\kappa(v))$-module corresponding to $i^\ast F$ and $\mathrm{Frob}_v \in G_v$ is the (arithmetic) Frobenius.

The L-function associated to $F$ is
\[L_X(F,s)=\prod_{v \in X_{0}}L_v(F,s)\]
where $X_0$ is the set of closed points of $X$. It is well-defined and holomorphic for $\Re(s)>1$.
\end{defi}

Suppose that $X$ is regular, let $K$ be the function field of $X$ and $G_K$ its absolute Galois group. Our definition differs only at a finite number of local factors from the Artin L-function of the complex $G_K$-representation given by $F_\eta\otimes \CC$, where $F_\eta$ is the pullback of $F$ to $\Spec(K)$. Both definitions coincide when $F=\ZZ$, and in particular they equal the (arithmetic) zeta function of $X$:
\[
L_X(\ZZ)=\zeta_{K,S}
\]

On the other hand if $i:v\to X$ is the inclusion of a closed point and $M$ is a discrete $G_v$-module of finite type, we have
\[
L_X(i_\ast M,s)=\det(I-N(v)^{-s}\mathrm{Frob}_v|M\otimes \CC)^{-1}
\]

Let us investigate the functorial properties of the L-functions of $\ZZ$-constructible sheaves.
\begin{prop}
\begin{enumerate}
    \item If $0\to F \to G \to H \to 0$ is an exact sequence of $\ZZ$-constructible sheaves, then \[L_X(G)=L_X(F)L_X(H).\]
    \item \label{L-function_finite_morphism} Let $\pi:Y\to X$ be a finite morphism, with $Y$ irreducible. If $F$ is a $\ZZ$-constructible sheaf on $Y$, then $\pi_\ast F$ is $\ZZ$-constructible and
    \begin{equation}
        L_X(\pi_\ast F)=L_Y(F).
    \end{equation}
    \item Let $j:V\subset X$ an open subscheme and let $F$ be a $\ZZ$-constructible sheaf on $V$. Then
    \[
    L_X(j_! F)=L_V(F).
    \]
\end{enumerate}
\end{prop}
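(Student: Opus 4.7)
The general plan is to reduce each of the three statements to assertions about local factors at a single closed point, exploiting the multiplicativity $L_X(F,s) = \prod_{v \in X_0} L_v(F,s)$ of the Euler product.

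\textbf{Part (1).} For each closed point $v$ of $X$, the stalk functor $i_v^\ast$ is exact on sheaves, so the short exact sequence $0 \to F \to G \to H \to 0$ induces a short exact sequence $0 \to F_v \to G_v \to H_v \to 0$ of discrete $G_v$-modules of finite type. Tensoring with $\CC$ (flat over $\ZZ$) preserves exactness, giving a short exact sequence of finite-dimensional $\CC$-vector spaces equivariant under $\mathrm{Frob}_v$. Since $\det(I - N(v)^{-s}\mathrm{Frob}_v \mid -)$ is multiplicative in short exact sequences of such spaces, $L_v(G,s) = L_v(F,s) L_v(H,s)$. Taking the product over $v \in X_0$ yields the claim.

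\textbf{Part (2).} The first observation is that $\pi_\ast F$ is $\ZZ$-constructible because $\pi$ is finite. The key step is the computation of the stalk at a closed point $x \in X$: for a geometric point $\bar{x}$ above $x$, one has
\[
(\pi_\ast F)_{\bar{x}} \;=\; \prod_{\bar{y} \to \bar{x}} F_{\bar{y}},
\]
and as a $G_x$-module this decomposes as $\bigoplus_{y \mid x} \mathrm{Ind}_{G_y}^{G_x} F_y$, the sum running over closed points $y$ of $Y$ above $x$. The main calculational step is the classical identity
\[
\det\bigl(I - N(x)^{-s}\mathrm{Frob}_x \mid \mathrm{Ind}_{G_y}^{G_x} (F_y \otimes \CC)\bigr) \;=\; \det\bigl(I - N(y)^{-s}\mathrm{Frob}_y \mid F_y \otimes \CC\bigr),
\]
which follows by choosing a system of coset representatives $\{1,\mathrm{Frob}_x,\dots,\mathrm{Frob}_x^{f-1}\}$ for $G_y \subset G_x$ (with $f = [\kappa(y):\kappa(x)]$), computing the block matrix of $\mathrm{Frob}_x$ acting on the induced space, and reducing $\det(I - T)$ for the resulting companion-type block to $\det(I - T^f \mathrm{Frob}_y)$ via the identity $N(y) = N(x)^f$ and $\mathrm{Frob}_x^f = \mathrm{Frob}_y$ under the identification $G_y \subset G_x$. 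Multiplying over $y \mid x$ gives $L_x(\pi_\ast F,s) = \prod_{y \mid x} L_y(F,s)$; since $\pi$ is finite, the closed points of $Y$ are partitioned by their images in $X$, and taking the product over $x \in X_0$ yields $L_X(\pi_\ast F,s) = L_Y(F,s)$.

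\textbf{Part (3).} For $v \in V_0 \subset X_0$, the stalk $(j_! F)_v = F_v$; for $v \in X \setminus V$, the stalk $(j_! F)_v = 0$, hence $L_v(j_! F,s) = \det(I \mid 0)^{-1} = 1$. Therefore $L_X(j_! F,s) = \prod_{v \in V_0} L_v(F,s) = L_V(F,s)$.

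The routine parts (1) and (3) require essentially no work beyond unwinding the definitions; the only step with actual content is the induction-of-representations computation in (2), which is the standard argument for compatibility of Artin $L$-functions under induction and which I would expect to be the main (small) obstacle.
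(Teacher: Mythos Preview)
Your proof is correct and follows essentially the same approach as the paper: parts (1) and (3) are dismissed as immediate from the definitions, and part (2) is handled by computing the stalk $(\pi_\ast F)_v$ via finite base change as $\bigoplus_{w\mid v}\mathrm{ind}^{G_v}_{G_w}F_w$ and then invoking the compatibility of local factors with induction. The only difference is that you sketch the companion-matrix argument for the induction identity explicitly, whereas the paper simply cites \cite[prop.~VII.10.4.(iv)]{ANT}.
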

\begin{proof}
Only \ref{L-function_finite_morphism}. is non-trivial. The pushforward $\pi_\ast$ commutes with base change so we have
\[
(\pi_\ast F)_v=\prod_{w|v} \mathrm{ind}^{G_w}_{G_v}(F_w)
\]
where the product is on finite places $w$ above $v$, hence
\[
L_v(\pi_\ast F,s)=\prod_{w|v} L_w(F,s)
\]
since local factors behave well with respect to induced modules \cite[prop. VII.10.4.(iv)]{ANT}.
\end{proof}

\begin{cor}
The L-function of a $\ZZ$-constructible sheaf on $X$ is meromorphic on $\CC$.
\end{cor}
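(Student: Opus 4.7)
The plan is to compare $L_X(F,s)$ with a classical Artin $L$-function and then invoke the classical meromorphicity result for Artin $L$-functions, which is proved via Brauer's induction theorem.

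First, on a dense open $V\subset X$ over which $F$ is locally constant, the local factor $L_v(F,s)$ for $v\in V$ at which $F$ is unramified coincides with the Artin local factor $\det(I-N(v)^{-s}\mathrm{Frob}_v|(F_\eta\otimes\CC)^{I_v})^{-1}$. At the remaining finitely many ``bad'' closed points of $X$, both $L_v(F,s)$ and the corresponding Artin local factor are rational functions in $N(v)^{-s}$, hence nowhere-zero meromorphic functions on $\CC$. It follows that $L_X(F,s)$ and the Artin $L$-function $L(F_\eta\otimes\CC,s)$ attached to the complex $G_K$-representation $F_\eta\otimes\CC$ differ by a finite product of nonzero meromorphic factors on $\CC$.

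Second, I would invoke the classical theorem (Artin, via Brauer) that the Artin $L$-function of any complex representation of $G_K$ is meromorphic on $\CC$. By Brauer's induction theorem, the character of $F_\eta\otimes\CC$ equals $\sum_i n_i\cdot\mathrm{ind}_{G_{L_i}}^{G_K}\chi_i$ with $n_i\in\ZZ$ and $\chi_i$ one-dimensional characters of the absolute Galois groups $G_{L_i}$ of certain finite extensions $L_i/K$, whence $L(F_\eta\otimes\CC,s)=\prod_i L(\chi_i,s)^{n_i}$ is a product with integer exponents of Hecke $L$-functions of one-dimensional characters, each of which is meromorphic on $\CC$ (by Hecke, or by Tate's thesis in the number field case; by classical results of Weil in the function field case).

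The main technical point I anticipate is the need for Brauer's induction rather than the $\ZZ$-coefficient Artin induction of Swan that underlies the dévissage arguments used elsewhere in the paper. Applying the latter to $F_\eta$ yields an exact sequence of $\ZZ G_K$-modules featuring $F_\eta^n$ for some integer $n\geq 1$, and multiplicativity of $L$-functions in exact sequences then yields only the meromorphicity of $L_X(F,s)^n$; passing to a single-valued meromorphic $n$-th root is not in general possible, since it would require every zero and pole of $L_X(F,s)^n$ to have order divisible by $n$. Brauer's theorem produces directly an integer-coefficient identity at the level of $\CC$-coefficient characters, and hence of $L$-functions, which sidesteps this obstruction.
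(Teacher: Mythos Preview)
Your proof is correct and, in fact, more careful than the paper's own sketch. The paper invokes its standard d\'evissage (via Swan's Artin induction over $\ZZ$) to reduce to zeta functions of regular schemes, to local factors at closed points, and to constructible sheaves (whose $L$-function is identically~$1$). But as you correctly anticipate in your final paragraph, the Artin-induction step introduces an exponent $n\geq 1$: multiplicativity in short exact sequences then yields only that $L_X(F,s)^n$ is meromorphic, and extracting a single-valued meromorphic $n$-th root is not in general possible. Your route through Brauer's induction theorem over $\CC$ sidesteps this entirely: it produces an integer-coefficient identity of characters with no spurious exponent, and hence expresses the Artin $L$-function of $F_\eta\otimes\CC$ directly as a product of abelian (Hecke) $L$-functions with integer exponents, each meromorphic by classical results. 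The comparison of $L_X(F,s)$ with this Artin $L$-function via finitely many rational local factors in $N(v)^{-s}$ is exactly right. In short, you have both spotted and closed a gap that the paper's one-line proof glosses over; elsewhere in the paper the d\'evissage is applied to additive or multiplicative invariants where dividing by $n$ is harmless, but ``meromorphic'' is not such an invariant.
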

\begin{proof}
By the above functoriality properties and the dévissage argument, we reduce to checking the corollary for zeta functions of regular $X$, for local factors, and for the $L$-functions of constructible sheaves, which are constant equal to $1$ hence trivially meromorphic.
\end{proof}

\begin{defi}
	Let $F$ be a $\ZZ$-constructible sheaf on $X$.
	We let 
	\[
	r_X(F):=\mathrm{ord}_{s=0}L_X(F,s)
	\]
	the order of $L_X(F,s)$ at $s=0$ and 
	\[L^\ast_X(F,0)=\lim_{s\to 0}L_X(F,s)s^{-r_X(F)}
	\]
	the special value of $L_X(F,s)$ at $s=0$.
\end{defi}
We will relate in sections \ref{section_special_value_thm} and \ref{twisted_L_functions} the order and special value at $s=0$ to the previously constructed Euler characteristic, by reducing via the dévissage argument to special cases.
\subsection{Extending the definition of the Weil-étale Euler characteristic}
In this section we construct a well-defined functorial Euler characteristic for any $\ZZ$-constructible sheaf, bypassing a full construction of the Weil-étale complex, which will enable us to obtain a special values theorem.

\begin{defi}\label{def_euler_char}
Let $F$ be a $\ZZ$-constructible sheaf on $U$. Let $j:V\subset U$ be an open affine non-empty subscheme such that $F_{|V}$ is locally constant and let $i:Z:=U \backslash V \hookrightarrow U$ be the inclusion of the closed complement. Denote $F_V:=j_!(F_{|V})$ and $F_Z:=i_\ast i^\ast F$. By the remark below Definition \ref{def_big_tiny} and by Proposition \ref{charac_extension_zero}, $F_V$ is big and $F_Z$ is tiny. We thus define the rank of $F$ and Weil-étale Euler characteristic of $F$:
\begin{align*}
    E_U(F) &= E_U(F_V)+E_U(F_Z)\\
    \chi_U(F) &= \chi_U(F_V)\chi_U(F_Z)
\end{align*}
\end{defi}

\begin{rmk}
	If $F$ is big or tiny, the above exact sequence is big-to-tiny, hence this definition is coherent with the previous one for $F$.
\end{rmk}

We first show that those quantities do not depend on the choice of $V$:

\begin{proof}
Let $j':V'\subset U$ be another open affine subscheme with $F_{V'}$ locally constant and let $i':Z'\hookrightarrow U$ be its closed complement. Without loss of generality we can suppose $V'\subset V$. Let $t:T:=V\backslash V' \hookrightarrow U$ be the closed inclusion. The open-closed decomposition lemma gives big-to-tiny short exact sequences
\[
\begin{tikzcd}[sep=small]
0 \rar & (t_\ast t^\ast F) \rar & i'_\ast i'^\ast F \rar \rar & i_\ast i^\ast F \rar & 0\\
0 \rar & j'_!F_{|V'} \rar & j_!F_{|V} \rar & t_\ast t^\ast F \rar & 0
\end{tikzcd}
\]
The proof follows formally.
\end{proof}

We can now investigate functoriality properties:

\begin{prop}
Let $0\to F\to G \to H \to 0$ be a short exact sequence of $\ZZ$-constructible sheaves. Then
\[
\chi_U(G)=\chi_U(F)\chi_U(H)
\]
\end{prop}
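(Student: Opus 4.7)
The plan is to reduce the statement to the multiplicativity of $\chi_U$ on big-to-tiny short exact sequences, which is already established. The key idea is to choose a single open affine $V$ that simultaneously trivializes all three of $F$, $G$, $H$.

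First, I would pick a non-empty open affine subscheme $j: V \hookrightarrow U$ such that $F_{|V}$, $G_{|V}$ and $H_{|V}$ are all locally constant. Such a $V$ exists: each of $F$, $G$, $H$ is locally constant on some dense open of $U$, so we can take a common refinement; if the resulting open equals $U = C$ (in the proper curve case), we remove one closed point to make it affine. Denote $i: Z := U \setminus V \hookrightarrow U$ the closed complement. By the previous proposition, the common value of $\chi_U$ in Definition \ref{def_euler_char} computed via this $V$ coincides with the definition, for each of $F$, $G$, $H$.

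Next, I would build a $3\times 3$ commutative diagram of short exact sequences
\[
\begin{tikzcd}[sep=small]
0 \rar & F_V \rar \dar & G_V \rar \dar & H_V \rar \dar & 0 \\
0 \rar & F \rar \dar & G \rar \dar & H \rar \dar & 0 \\
0 \rar & F_Z \rar & G_Z \rar & H_Z \rar & 0
\end{tikzcd}
\]
where the columns come from the open-closed decomposition $0 \to j_! j^* \to \mathrm{id} \to i_* i^* \to 0$ applied to each of $F$, $G$, $H$. The top row is exact because $j_!$ is exact, and the bottom row is exact because $i_*$ and $i^*$ are both exact ($i$ is a closed immersion). By the remark after Definition \ref{def_big_tiny}, since $F_{|V}$, $G_{|V}$, $H_{|V}$ are locally constant of finite type and $V$ is affine, the sheaves $F_V$, $G_V$, $H_V$ are all big; and since $F_Z$, $G_Z$, $H_Z$ are supported on the finite closed subscheme $Z$, they are all tiny. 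In particular the top row is a (big-to-big hence) big-to-tiny short exact sequence, and so is the bottom row (tiny-to-tiny).

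Then, applying the multiplicativity of $\chi_U$ for big-to-tiny short exact sequences (the theorem of section \ref{Construction}) to the top and bottom rows, I obtain
\[
\chi_U(G_V) = \chi_U(F_V)\,\chi_U(H_V), \qquad \chi_U(G_Z) = \chi_U(F_Z)\,\chi_U(H_Z).
\]
Multiplying these two equalities and using the definition $\chi_U(-) = \chi_U(-_V)\,\chi_U(-_Z)$ for each of $F$, $G$, $H$ (with respect to the common $V$) yields $\chi_U(G) = \chi_U(F)\,\chi_U(H)$, as desired. The same argument with additivity replacing multiplicativity gives $E_U(G) = E_U(F) + E_U(H)$.

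The only non-routine point is the choice of $V$: one has to simultaneously trivialize all three sheaves on a non-empty open affine, which requires the independence of $\chi_U$ on the chosen $V$ (just established in the previous proposition). Once the common $V$ is in place, the argument is a formal diagram chase combined with the existing big-to-tiny multiplicativity.
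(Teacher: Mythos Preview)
Your proof is correct and follows essentially the same approach as the paper: choose a common open affine $V$ trivializing all three sheaves, apply the open-closed decomposition to obtain a $3\times 3$ diagram whose outer rows (or columns) are big-to-tiny, and conclude by the already established multiplicativity. Your write-up is in fact slightly more careful than the paper's, making explicit the affineness of $V$, the exactness of the relevant functors, and the role of the independence of $\chi_U$ from the choice of $V$.
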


\begin{proof}
Choose a common open subscheme $V$ on which the restriction of each sheaf is locally constant, then apply the open-closed decomposition lemma, giving a commutative diagram with exact rows and columns (using notation from Definition \ref{def_euler_char}):
\[
\begin{tikzcd}
 & 0 \dar & 0 \dar & 0 \dar & \\
0 \rar & F_V \rar \dar & F \rar \dar & i_\ast F_Z \rar \dar & 0\\
0 \rar & G_V \rar \dar & G \rar \dar & i_\ast G_Z \rar \dar & 0\\
0 \rar & H_V \rar \dar & H \rar \dar & i_\ast H_Z \rar \dar & 0\\
 & 0 & 0 & 0 \\
\end{tikzcd}
\]
The terms on the left are big sheaves and the terms on the right are tiny sheaves, so the proof follows formally.
\end{proof}

\begin{prop}
Let $L/K$ be a finite Galois extension and $\pi:V \to U$ the normalization of $U$ in $L$. If $F$ is a $\ZZ$-constructible sheaf on $V$, then
\begin{align*}
    \chi_U(\pi_\ast F) &= \chi_V(F)\\
    E_U(\pi_\ast F) &= E_V(F)
\end{align*}
\end{prop}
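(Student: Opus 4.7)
The plan is to reduce to the big-or-tiny case, which was established in the preceding proposition, by exploiting the open--closed decomposition underlying Definition \ref{def_euler_char} and choosing the open subscheme compatibly with $\pi$.

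First I would produce a compatible open affine. Since $\pi$ is finite and generically \'etale, its ramification locus is finite, and $F_{|V}$ is locally constant away from finitely many closed points; removing all of these points yields an open affine $W\subset V$ on which $\pi_{|W}$ is finite \'etale and $F_{|W}$ is locally constant. Setting $Y:=V\setminus W$ (a finite set of closed points), $Y':=\pi(Y)$ (closed in $U$), $V':=U\setminus Y'$, one checks $\pi^{-1}(V')=W$, so the restriction $\pi_W:W\to V'$ is finite \'etale and $(\pi_\ast F)_{|V'}=(\pi_W)_\ast F_{|W}$ is locally constant. (If $Y=\emptyset$ then $F$ is itself big and the result reduces immediately to the preceding proposition; if $Y\neq\emptyset$ then $Y'\neq\emptyset$ by surjectivity of $\pi$, so $V'$ is a non-empty open affine of $U$.)

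Next I would apply Definition \ref{def_euler_char} to both $F$ and $\pi_\ast F$ with the compatible open affines $W\subset V$ and $V'\subset U$, obtaining
\[
\chi_V(F)=\chi_V((j_W)_!F_{|W})\cdot\chi_V((i_Y)_\ast i_Y^\ast F),
\]
\[
\chi_U(\pi_\ast F)=\chi_U((j_{V'})_!(\pi_\ast F)_{|V'})\cdot\chi_U((i_{Y'})_\ast i_{Y'}^\ast\pi_\ast F).
\]
Proper base change for the finite morphism $\pi$, applied to the two cartesian squares induced by $W=\pi^{-1}(V')$ and $Y=\pi^{-1}(Y')$, provides natural isomorphisms
\[
\pi_\ast(j_W)_!F_{|W}\simeq(j_{V'})_!(\pi_\ast F)_{|V'},\qquad \pi_\ast(i_Y)_\ast i_Y^\ast F\simeq(i_{Y'})_\ast i_{Y'}^\ast\pi_\ast F;
\]
the first uses $\pi_!=\pi_\ast$ for proper $\pi$ together with $(j_{V'}\circ\pi_W)_!=(\pi\circ j_W)_!$, while the second is immediate from the commutativity of the closed square. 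Since $(j_W)_!F_{|W}$ is big on $V$ and $(i_Y)_\ast i_Y^\ast F$ is tiny on $V$ (by the remarks following Definition \ref{def_big_tiny}), the preceding proposition applies to each piece, giving the identities $\chi_U(\pi_\ast(j_W)_!F_{|W})=\chi_V((j_W)_!F_{|W})$ and $\chi_U(\pi_\ast(i_Y)_\ast i_Y^\ast F)=\chi_V((i_Y)_\ast i_Y^\ast F)$. Multiplying yields $\chi_U(\pi_\ast F)=\chi_V(F)$; the equality of ranks $E_U(\pi_\ast F)=E_V(F)$ is obtained identically, with multiplication replaced by addition.

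The principal (and minor) obstacle is the initial geometric setup, namely arranging that a single open affine $V'\subset U$ simultaneously has affine preimage $W$ on which $\pi_{|W}$ is \'etale and $F_{|W}$ is locally constant, and handling separately the degenerate case $Y=\emptyset$ when $U$ is proper. Once this is in place everything reduces formally to proper base change for the finite map $\pi$ and to the already proved big-or-tiny version of the proposition.
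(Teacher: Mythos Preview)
Your approach is essentially the same as the paper's: choose compatible open--closed decompositions on $V$ and $U$, use finite base change to identify the two pieces of $\pi_\ast F$ with pushforwards of the two pieces of $F$, and then invoke the already proved big-or-tiny version on each piece.

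There is, however, a small gap in your geometric setup. The claim $\pi^{-1}(V')=W$ amounts to the equality $\pi^{-1}(\pi(Y))=Y$, i.e., that the finite set $Y$ is saturated under $\pi$. This does not follow from your construction: if a fibre of $\pi$ contains both a point of $Y$ and a point of $W$, that second point lies in $W$ but not in $\pi^{-1}(V')$. The fix is immediate---enlarge $Y$ to $\pi^{-1}(\pi(Y))$, or equivalently first choose an open affine $V'\subset U$ over which $\pi$ is \'etale and $(\pi_\ast F)_{|V'}$ is locally constant, and then set $W:=\pi^{-1}(V')$ (this is what the paper does). On this possibly smaller $W$ the restriction $F_{|W}$ is still locally constant, $\pi_{|W}$ is still finite \'etale, the base-change squares are genuinely cartesian, and the rest of your argument goes through verbatim.
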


\begin{proof}
We can fix open affine subschemes $j:U'\subset U$, $j':V'\to V$ such that $F_{|V'}$ is locally constant and $\pi':=\pi_{|V'}:V'\to U'$ is the normalization of $U'$ in $L$. Then $(\pi_\ast F)_{|U'}=\pi'_\ast (F_{|V'})$ is locally constant. Denote $i:Z=U\backslash U' \hookrightarrow U$ and $i':Z'=V\backslash V' \hookrightarrow V$ the closed inclusions. The open-closed decomposition lemma gives short exact sequences
\begin{align*}
    & 0 \to (\pi_\ast F)_{U'} \to \pi_\ast F \to (\pi_\ast F)_Z \to 0\\
    & 0 \to F_{V'} \to F \to F_{Z'} \to 0
\end{align*}
Denote by $\pi_Z$ the base change of $\pi$ to $Z$. Then $\pi$ is finite so $i_\ast i^\ast \pi_\ast = i_\ast \pi_{Z,\ast} i'^\ast = \pi_\ast i'_\ast i'^\ast F$ thus $(\pi_\ast F)_Z=\pi_\ast F_{Z'}$ and we get
\[
    \chi_U(\pi_\ast F):=\chi_{U'}(\pi'_\ast F)\chi_U( (\pi_\ast F)_Z)=\chi_V'(F)\chi_U(\pi_\ast F_{Z'})
    =\chi_V'(F)\chi_V(F_{Z'})
    =:\chi_V(F)
\]
\end{proof}

\begin{prop}
Let $j: V\subset U$ be an open immersion. Let $F$ be a $\ZZ$-constructible sheaf on $V$. We have
\begin{align*}
    \chi_U(j_!F) &= \chi_V(F)\\
    E_U(j_!F) &= E_V(F) \\
\end{align*}
\end{prop}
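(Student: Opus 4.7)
The plan is to pick an open affine $j_W : W \hookrightarrow V$ with $F_{|W}$ locally constant, and use the \emph{same} $W$ (now viewed as open in $U$) in the definition of $\chi_U(j_!F)$. Since $V$ differs from $U$ by finitely many closed points and $Z_W := V \setminus W$ is a finite set of closed points of $V$, the complement $Z := U \setminus W = (U \setminus V) \sqcup Z_W$ is closed in $U$, with $Z_W$ open and $U \setminus V$ closed inside $Z$. Setting $k := j \circ j_W : W \hookrightarrow U$ and $i : Z \hookrightarrow U$, and noting that $(j_!F)_{|W} = F_{|W}$ is locally constant, the definition of the Weil-étale Euler characteristic yields
\[
\chi_U(j_!F) = \chi_U(k_!(F_{|W})) \cdot \chi_U(i_\ast i^\ast j_!F).
\]

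For the big part, I will invoke Proposition~\ref{charac_extension_zero} (already proved in the big-or-tiny case) twice: applied to $k : W \hookrightarrow U$ it gives $\chi_U(k_!(F_{|W})) = \chi_W(F_{|W})$, and applied to $j_W : W \hookrightarrow V$ it gives $\chi_V((j_W)_!(F_{|W})) = \chi_W(F_{|W})$. So both big parts coincide with $\chi_W(F_{|W})$.

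For the tiny part, since $j_!F$ vanishes on $U \setminus V$, the sheaf $i^\ast(j_!F)$ is supported on the open piece $Z_W \subset Z$, where it restricts to $i_W^\ast F$. Because $Z_W$ is in fact closed in $U$ (being a finite set of closed points), the locally closed immersion $j \circ i_W : Z_W \hookrightarrow U$ is a closed immersion, and a short stalk computation identifies $i_\ast i^\ast(j_!F) \simeq (j \circ i_W)_\ast(i_W^\ast F)$. Then the remark following Corollary~\ref{euler_vs_point}, according to which $\chi_z$ for a closed point $z$ is intrinsic to $z$, gives
\[
\chi_U\bigl((j \circ i_W)_\ast(i_W^\ast F)\bigr) = \prod_{z \in Z_W} \chi_z((i_W^\ast F)_z) = \chi_V\bigl((i_W)_\ast(i_W^\ast F)\bigr).
\]

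Putting the two parts together, $\chi_U(j_!F) = \chi_W(F_{|W}) \cdot \chi_V((i_W)_\ast(i_W^\ast F)) = \chi_V(F)$, where the final equality is the definition of $\chi_V(F)$ with respect to $(W, Z_W)$. The identical argument with sums replacing products yields $E_U(j_!F) = E_V(F)$. The main obstacle is the identification $i_\ast i^\ast(j_!F) \simeq (j \circ i_W)_\ast(i_W^\ast F)$ together with the appeal to the intrinsic nature of $\chi_z$; these are precisely what allows the tiny contribution from $Z_W$ to be transported between the two ambient schemes $V$ and $U$, the extra points $U \setminus V$ contributing trivially because $j_!F$ vanishes there.
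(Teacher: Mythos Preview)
Your proof is correct and follows essentially the same strategy as the paper: choose a common affine open $W\subset V$ trivializing $F$, and match the big and tiny contributions on both sides. The only minor difference is in the tiny part: the paper observes $i_\ast i^\ast(j_!F)=j_!\bigl((i_W)_\ast i_W^\ast F\bigr)$ and invokes Proposition~\ref{charac_extension_zero} once more (now for the tiny sheaf $(i_W)_\ast i_W^\ast F$ on $V$), whereas you decompose this skyscraper into its points and appeal to the intrinsic nature of $\chi_z$ from the remark after Corollary~\ref{euler_vs_point}. Both arguments are valid and equivalent in spirit; the paper's version is slightly more uniform since it uses the same proposition for both the big and tiny pieces.
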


\begin{proof}
Let $j':V'\subset V$ an open affine subscheme with $F_{|V'}$ locally constant, and let $i':Z':=V\backslash V'\hookrightarrow V$, $i:Z=U\backslash V' \hookrightarrow U$ denote the closed inclusions. Then $(j_!F)_{|V'}=F_{|V'}$, $i^\ast j_! F = t_\ast i'^\ast F$ (with $t$ the clopen immersion $Z'\subset Z$) and $i_\ast t_\ast=j_!i'_\ast$ so the short exact sequences given by the open-closed decomposition lemma give
\begin{align*}
    \chi_U(j_!F):=\chi_U((jj')_! (j_!F)_{|V'})\chi_U(i_\ast i^\ast j_! F) =\chi_U((jj')_! F_{|V'})\chi_U(j_!i'_\ast i'^\ast F) &=\chi_V(j'_! F_{|V'})\chi_V(i'_\ast i'^\ast F)\\
    &=:\chi_V(F)
\end{align*}
\end{proof}

\subsection{Explicit formula for the Weil-étale Euler characteristic}

\begin{prop}\label{euler_vs_arbitrary}
Let $F$ be a $\ZZ$-constructible sheaf. Then
\[
\chi_U(F)=\frac{[H^0_c(U,F)_{tor}][\Ext^1_U(F,\GG_m)_{tor}]R(F)}{[H^1_c(U,F)_{tor}][\Hom_U(F,\GG_m)_{tor}]}
\]
\end{prop}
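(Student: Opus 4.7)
The plan is as follows. First, observe that for $F$ big or tiny the claimed right-hand side already reduces to Propositions~\ref{euler_vs_big} and \ref{euler_vs_tiny}. Indeed, for $F$ big, $\Ext^1_U(F,\GG_m)$ is finite by definition; the perfectness of the regulator pairing (Theorem~\ref{regulator_perfect}) combined with the rank formula forces
\[
\mathrm{rank}_\ZZ H^0_c(U,F) = \mathrm{rank}_\ZZ \Ext^1_U(F,\GG_m) = 0,
\]
so $H^0_c(U,F)$ is finite as well, and $[\Ext^1_U(F,\GG_m)] = [\Ext^1_U(F,\GG_m)_{tor}]$ and $[H^0_c(U,F)] = [H^0_c(U,F)_{tor}]$. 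Proposition~\ref{euler_vs_big} then gives exactly the claimed formula. The symmetric argument handles the tiny case using Proposition~\ref{euler_vs_tiny}, with $\Hom_U(F,\GG_m)$ and $H^1_c(U,F)$ becoming finite.

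For an arbitrary $\ZZ$-constructible $F$, pick a big-to-tiny short exact sequence $0 \to F_V \to F \to F_Z \to 0$ as in Definition~\ref{def_euler_char}. By construction $\chi_U(F) = \chi_U(F_V)\chi_U(F_Z)$, and by the previous step $\chi_U(F_V) = A(F_V)$ and $\chi_U(F_Z) = A(F_Z)$, where $A(G)$ denotes the right-hand side of the claimed formula. It thus suffices to prove $A(F) = A(F_V) A(F_Z)$.

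The multiplicativity of $A$ follows from the functoriality of the regulator pairing and the long exact sequences
\[
\cdots \to H^i_c(U, F_V) \to H^i_c(U, F) \to H^i_c(U, F_Z) \to H^{i+1}_c(U, F_V) \to \cdots
\]
\[
\cdots \to \Ext^i_U(F_Z, \GG_m) \to \Ext^i_U(F, \GG_m) \to \Ext^i_U(F_V, \GG_m) \to \Ext^{i+1}_U(F_Z, \GG_m) \to \cdots
\]
In degrees $i \geq 2$, Artin-Verdier duality (Theorem~\ref{AVduality}) identifies the two long exact sequences as Pontryagin-duals of each other on the torsion-of-cofinite-type tails, so the alternating products of torsion cardinalities in those degrees cancel against each other in the ratio defining $A$. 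In degrees $0$ and $1$, the standard multiplicativity of orders of finite groups along exact sequences accounts for the torsion factors $[H^0_c(-)_{tor}]$, $[H^1_c(-)_{tor}]$, $[\Hom_U(-,\GG_m)_{tor}]$, $[\Ext^1_U(-,\GG_m)_{tor}]$. For the regulator factor, one uses that after $\otimes \RR$ the long exact sequences split into short exact sequences of $\RR$-vector spaces that are compatible with the regulator pairings; a determinant computation then yields multiplicativity of $R$ up to an index correction that matches precisely the remaining torsion factors from the integral long exact sequences.

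The main obstacle is the delicate bookkeeping across both long exact sequences: verifying that the high-degree contributions cancel pairwise via Artin-Verdier duality, and that the index corrections produced by the regulator multiplicativity on the rational pieces exactly match the torsion ratios coming from the degree-$0,1$ integral sequences. The underlying computation is a classical but intricate Euler-characteristic argument in the spirit of Dirichlet's regulator formalism, and the perfectness of the regulator pairing after base change to $\RR$ (Theorem~\ref{regulator_perfect}) is what ensures that all the pieces fit together.
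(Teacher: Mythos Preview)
Your strategy matches the paper's exactly: reduce to the big and tiny cases via Propositions~\ref{euler_vs_big} and~\ref{euler_vs_tiny}, then prove multiplicativity of the right-hand side along the sequence $0\to F_V\to F\to F_Z\to 0$ using the two long exact sequences, Artin--Verdier duality for the tails, and compatibility with the regulator pairing.

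Where your write-up remains a sketch is precisely the ``delicate bookkeeping'' you flag at the end, and the paper resolves it cleanly rather than by an ad~hoc index chase. It truncates the two long exact sequences into finite acyclic complexes
\[
A^\bullet:\ 0\to H^0_c(F_V)\to\cdots\to H^1_c(F_Z)\to I\to 0,\qquad
B^\bullet:\ 0\to \Hom(F_Z,\GG_m)\to\cdots\to \Ext^1(F_V,\GG_m)\to J\to 0,
\]
where $I=\mathrm{im}\bigl(H^1_c(F_Z)\to H^2_c(F_V)\bigr)$ and $J=\mathrm{im}\bigl(\Ext^1(F_V,\GG_m)\to \Ext^2(F_Z,\GG_m)\bigr)$. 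Artin--Verdier duality gives $I\simeq J^D$, so $[I]=[J]$; the regulator pairing then furnishes an isomorphism $\phi:B^\bullet_\RR\xrightarrow{\sim}\Hom(A^\bullet_\RR,\RR)[-1]$. A single determinant lemma for such a pair of acyclic complexes (Lemma~\ref{formula_determinant_acyclic_complexes}) yields
\[
\prod_i[B^i_{tor}]^{(-1)^i}=\det(\phi)\prod_i[A^i_{tor}]^{(-1)^i},
\]
which, after substituting Propositions~\ref{euler_vs_big} and~\ref{euler_vs_tiny} and cancelling $[I]=[J]$, is exactly the desired identity. So your outline is correct; what is missing is this packaging of the bookkeeping into a single acyclic-complex determinant identity, together with the observation $[I]=[J]$ that makes the tails disappear.
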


\begin{proof}
Let $F_V$, $F_Z$ be as in the preceding section, so that $\chi_U(F):=\chi_U(F_V)\chi_U(F_Z)$. Consider the two following exact sequences, viewed as acyclic cochain complexes :
\begin{align*}
	A^\bullet : 0 &\to H^0_c(U,F_V) \to H^0_c(U,F) \to H^0_c(U,F_Z)\\
	& \to H^1_c(U,F_V) \to H^1_c(U,F) \to H^1_c(U,F_Z) \to I \to 0\\
	B^\bullet : 0 & \to \Hom_U(F_Z,\GG_m) \to \Hom_U(F,\GG_m) \to \Hom_U(F_V,\GG_m)\\
	 & \to \Ext^1_U(F_Z,\GG_m) \to \Ext^1_U(F,\GG_m) \to \Ext^1_U(F_V,\GG_m) \to J \to 0
\end{align*}
where $I$ is the image of $H^1_c(U,F_Z)$ in $H^2_c(U,F_V)$ and $J$ is the image of $\Ext^1_U(F_V,\GG_m)$ in $\Ext^2_U(F_Z,\GG_m)$. We choose the convention of putting the first non-zero term of $A$ in degree $0$ and the last non-zero term of $B$ in degree $2$. Since $F_V$ is big and $F_Z$ is tiny, using Artin-Verdier duality we have isomorphisms between finite groups $H^1_c(U,F_Z)\simeq \Ext^2_U(F_Z,\GG_m)^D$ and $H^2_c(U,F_V) \simeq \Ext^1_U(F_V,\GG_m)^D$, thus Pontryagin duality gives $I\simeq J^D$.

The regulator pairing gives an isomorphism $\phi : B_\RR \to \Hom(A_\RR,\RR)[-1]$,\footnote{hence the choice of indexing convention} so lemma \ref{formula_determinant_acyclic_complexes} together with propositions \ref{euler_vs_tiny} and \ref{euler_vs_tiny} give:
\begin{align*}
1 &= \frac{[H^0_c(U,F)_{tor}][\Ext^1_U(F,\GG_m)_{tor}]R(F)}{[H^1_c(U,F)_{tor}][\Hom_U(F,\GG_m)_{tor}]}\cdot\frac{[J]}{[I]}\cdot\frac{1}{\chi_U(F_V)\chi_U(F_Z)}\\
& = \frac{[H^0_c(U,F)_{tor}][\Ext^1_U(F,\GG_m)_{tor}]R(F)}{[H^1_c(U,F)_{tor}][\Hom_U(F,\GG_m)_{tor}]}\cdot\frac{1}{\chi_U(F)}
\end{align*}
where $R(F)$ is the regulator of $F$ of definition \ref{regulator}; the result follows.
\end{proof}

\subsection{Computations, part II, and the special value theorem}\label{section_special_value_thm}

Our aim is to prove Theorem \ref{thmC} by using the dévissage argument to reduce to special cases. The case of $F=\ZZ$ was already shown; let us handle now the case of a constructible sheaf.

\begin{prop}\label{euler_constructible}
Let $F$ be a constructible sheaf. Then $\chi_U(F)=1$ and $E_U(F)=0$. As the $L$-function of a constructible sheaf is $1$, we have $r_U(F)=E_U(F)$ and $L^\ast_U(F,0)=\chi_U(F)$.
\end{prop}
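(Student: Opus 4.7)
The plan is to separate two easy observations from the harder computation of $\chi_U(F)$. First, since each stalk $F_v$ is a finite abelian group, $F_v\otimes_\ZZ\CC=0$, every local Euler factor of $L_U(F,s)$ equals $1$, and $L_U(F,s)=1$; hence $r_U(F)=0$ and $L^\ast_U(F,0)=1$. Second, by Artin-Verdier duality (Theorem~\ref{AVduality}) the groups $\Ext^i_U(F,\GG_m)$ and $\hat H^i_c(U,F)$ are finite for constructible $F$, and the comparison triangle $R\Gamma_c\to R\hat\Gamma_c\to T$ (whose cofiber $T(F)$ has finite cohomology, involving Tate (co)homologies of finite archimedean Galois modules) gives finiteness of every $H^i_c(U,F)$ as well. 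All $\RR$-ranks therefore vanish, and the formula $E_U(F)=\mathrm{rank}_\ZZ H^1_c(U,F)-\mathrm{rank}_\ZZ H^0_c(U,F)$ gives $E_U(F)=0=r_U(F)$. Since $F(K)\subset F_\eta$ is also finite, $(-1)^{\mathrm{rank}_\ZZ F(K)}=1$, and what remains is to show $\chi_U(F)=1$.

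To compute $\chi_U(F)$, I will apply Proposition~\ref{euler_vs_arbitrary}: the regulator $R(F)=1$ since it is the determinant of a pairing of zero-dimensional $\RR$-vector spaces. Combining this with Artin-Verdier (an isomorphism in every degree for constructible $F$), the identifications $H^i_c=\hat H^i_c$ for $i\geq 2$, and $|A^D|=|A|$ for finite $A$, the formula collapses to
\[
\chi_U(F)=\prod_{i=0}^{3}|H^i_c(U,F)|^{(-1)^i},
\]
the multiplicative Euler characteristic of the perfect complex $R\Gamma_c(U,F)$ of finite abelian groups.

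I then show this equals $1$ by dévissage using multiplicativity of $\chi_U$ in arbitrary short exact sequences of $\ZZ$-constructible sheaves. The open-closed decomposition $0\to j_!F_{|V}\to F\to i_\ast F_{|Z}\to 0$ for $V\subset U$ affine open with $F_{|V}$ locally constant reduces, via the $j_!$-functoriality of Proposition~\ref{charac_extension_zero} and Corollary~\ref{euler_vs_point}, to two cases: (a) $i_\ast M$ for $M$ a finite $G_v$-module at a closed point, handled immediately by Corollary~\ref{euler_vs_point} (which gives $\chi_v(M)=|H^0(G_v,M)|/|H^1(G_v,M)|$) together with the Herbrand identity $|M^{G_v}|=|M_{G_v}|$ for finite modules over the procyclic group $G_v=\hat\ZZ$; and (b) locally constant finite sheaves on $V$. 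For constant $M_V$ in case (b), I will apply multiplicativity to the SES $0\to\ZZ_V\xrightarrow{\cdot n}\ZZ_V\to(\ZZ/n\ZZ)_V\to 0$ to force $\chi_V((\ZZ/n\ZZ)_V)=1$, and decomposing a general finite abelian group into cyclic summands then yields $\chi_V(M_V)=1$.

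The hard part will be the non-constant locally constant case. The plan is to pick a finite Galois cover $\pi:V'\to V$ of group $\Gamma$ trivializing $G$, so $\pi^\ast G\simeq M_{V'}$ is constant; pushforward functoriality then gives $\chi_V(\pi_\ast\pi^\ast G)=\chi_{V'}(M_{V'})=1$, and the unit map $G\hookrightarrow\pi_\ast\pi^\ast G$ yields $\chi_V(G)\chi_V(C)=1$ with $C$ again locally constant finite on $V$. One closes the argument by descending induction after $p$-primary decomposition and reduction to simple $\FF_p[\Gamma]$-modules, or alternatively computes $\prod|H^i_c(V,G)|^{(-1)^i}$ directly via the Hochschild-Serre spectral sequence $R\Gamma_c(V,G)\simeq R\Gamma(\Gamma,R\Gamma_c(V',M_{V'}))$, where the Herbrand identity at each finite Galois step provides the needed cancellation.
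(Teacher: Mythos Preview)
Your reduction to $\chi_U(F)=\prod_{i=0}^3 |H^i_c(U,F)|^{(-1)^i}$ via Proposition~\ref{euler_vs_arbitrary} and Artin--Verdier is correct and matches the paper (which uses Proposition~\ref{euler_vs_big} to the same effect). The treatment of $i_\ast M$ is also the paper's, and your shortcut for the constant case via multiplicativity on $0\to\ZZ\xrightarrow{n}\ZZ\to\ZZ/n\to 0$ is a genuine simplification over what the paper does.

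The gap is in the locally constant non-constant case. Neither of your two proposed closures works as written. The unit sequence $0\to G\to\pi_\ast\pi^\ast G\to C\to 0$ gives $\chi_V(G)\chi_V(C)=1$, but $C$ corresponds to a $\Gamma$-module of cardinality $|M|^{|\Gamma|-1}>|M|$, so there is no ``descending'' induction; reducing to simple $\FF_p[\Gamma]$-modules does not help unless you also know that the classes of permutation modules $[\FF_p[\Gamma/H]]$ span $G_0(\FF_p[\Gamma])\otimes\QQ$, which you have not argued (and which would in any case require using all intermediate covers, not just the single trivializing one). For the Hochschild--Serre alternative, the identification $R\Gamma_c(V,G)\simeq R\Gamma(\Gamma,R\Gamma_c(V',M_{V'}))$ is not established and is delicate in the number field case because of the archimedean corrections built into $R\Gamma_c$; moreover, for a general finite group $\Gamma$ there is no Herbrand quotient: $H^i(\Gamma,N)$ is nonzero in infinitely many degrees, so the alternating product over the $E_2$-page makes no sense, and no cancellation is available.

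The paper avoids this by appealing to an external global Euler-characteristic input: in the number field case, Milne's formula \cite[II.2.13]{ADT} gives $\prod_i[\hat H^i_c(U,F)]^{(-1)^i}=\prod_{v\in S_\infty}[H^0(K_v,F_{\eta_v})]$, and the passage from $\hat H^i_c$ to $H^i_c$ is then handled via the comparison cofiber $T(F)$; in the function field case one reduces to $C$ proper and runs Hochschild--Serre for the extension $\bar C/C$ over $G_k\simeq\hat\ZZ$, where the base group \emph{is} procyclic and the $E_2$-page is two-column, so the Herbrand-type identity $[H^0(G_k,-)]=[H^1(G_k,-)]$ for finite coefficients does apply. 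If you want to salvage your strategy without invoking \cite[II.2.13]{ADT}, you would need an Artin/Brauer-type induction statement for $\FF_p[\Gamma]$-modules showing that $\chi_V$ vanishes on every simple once it vanishes on all permutation modules from cyclic subgroups; this is plausible but is real extra work you have not done.
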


\begin{proof}
Let us first treat the case where $F$ is supported on a finite set ; by functoriality, this reduces to the case where $F$ is of the form $i_\ast M$, with $i:v\to U$ the inclusion of a closed point and $M$ a finite discrete $G_v$-module. By Corollary \ref{euler_vs_point}, we have
\[
\chi_U(i_\ast M)=\frac{[H^0(G_v,M)]}{[H^1(G_v,M)]}
\]
Since $M$ is finite, we have \cite[XIII.1 prop. 1]{Serre80} that $H^1(G_v,M)=M/(\varphi-1)M$, where $\varphi$ is the Frobenius. Hence the exact sequence
\begin{align}\label{cohomology_Z_hat_finite_coeff}
0 \to H^0(G_v,M)\to M \xrightarrow{\varphi-1} M \to H^1(G_v,M) \to 0
\end{align}
gives the result by taking cardinals.

Let us now treat the general case. We have seen in paragraph \ref{cas_constructible} that the regulator pairing of $F$ is trivial after base change to $\RR$, thus $R(F)=1$ and $E_U(F)=0$. A constructible sheaf is big, so by proposition \ref{euler_vs_big} and Artin-Verdier duality we have:
\[
\chi_U(F)=\frac{[\hat{H}^2_c(U,F)][H^0_c(U,F)]}{[\hat{H}^3_c(U,F)][H^1_c(U,F)]}
\]

Suppose we are in the number field case. We want to apply \cite[II.2.13]{ADT} ; by functoriality we can throw away a finite number of points, and thus we can assume that there is an integer $m$ invertible on $U$ such that $mF=0$. Thus using \cite[II.2.13]{ADT}, we obtain further
\[
\chi_U(F)=\frac{[\hat{H}^1_c(U,F)][H^0_c(U,F)]}{[\hat{H}^0_c(U,F)][H^1_c(U,F)]}\prod_{v \in S_\infty}[H^0(K_v,F_{\eta_v})]
\]
Recall that the cofiber $T$ of $R\Gamma_c(U,-)\to R\hat{\Gamma}_c(U,-)$ computes homology at the archimedean places (equation \ref{equation_T}). For a cyclic group $G$ with generator $s$ and a $G$-module $M$, the exact sequence
\[
0\to H^0(G,M) \to M \xrightarrow{s-1} M \to H_0(G,M) \to 0
\]
shows that $[H_0(G,M)]=[H^0(G,M)]$, thus
\[
[H^0(T_F)]=\prod_{v \in S_\infty}[H^0(K_v,F_{\eta_v})]
\]
Moreover, the Herbrand quotient of a finite $G$-module is $1$ so we have, using the exact sequence \eqref{long_sequence_compact_support}:
\[
[\hat{H}^{-1}_c(U,F)]=\prod_{v\in S_\infty}[\hat{H}^{-1}(K_v,F_{\eta_v}))]=\prod_{v\in S_\infty}[\hat{H}^{-2}(K_v,F_{\eta_v}))]=[H^{-1}(T_F)]
\]
The proposition now follows from the long exact cohomology sequence
\[
0 \to \hat{H}^{-1}_c(U,F)\to H^{-1}(T_F) \to H^0_c(U,F) \to \hat{H}^0_c(U,F) \to H^0(T_F) \to H^1_c(U,F) \to \hat{H}^1_c(U,F) \to 0
\]
by taking cardinals.

In the function field case, by functoriality we can suppose that $U=C$ is a proper smooth curve over $k=\FF_p$. Denote $\bar{X}:=X\times_k\bar{k}$ where $\bar{k}$ is an algebraic closure of $k$, and $\bar{F}$ the pullback of $F$ to $\bar{X}$. Then the cohomology groups $H^i(\bar{X},\bar{F})$ are finite \cite[VI.2.1]{MilneEtCohom}. For $M$ a finite $G_k:=\mathrm{Gal}(\bar{k}/k)$-module, we have $H^i(G_k,M)=0$ for $i\geq 2$ thus the spectral sequence
	\[
	E_2^{p,q}=H^p(G_k,H^q(\bar{X},\bar{F}))\Rightarrow H^{p+q}(X,F)
	\]
	degenerates at the $E_2$ page to give short exact sequences
	\[
	0 \to H^{1}(G_k,H^{i-1}(\bar{X},\bar{F})) \to H^{i}(X,F) \to H^{i}(\bar{X},\bar{F})^{G_k} \to 0
	\]
	for all $i\in \ZZ$.
	From \eqref{cohomology_Z_hat_finite_coeff} we deduce moreover that $[H^{1}(G_k,H^{i}(\bar{X},\bar{F}))]=[H^{i}(\bar{X},\bar{F})^{G_k}]$, which concludes the proof.
\end{proof}

We now prove \ref{thmC}. For a $\ZZ$-constructible sheaf $F$, denote $F_\eta$ the discrete finite type $G_K:=\mathrm{Gal}(K^{sep}/K)$-module obtained by pullback to the generic point $\eta=\Spec(K)$, and $F(K):=H^0(G_K,F_\eta)$.

\begin{thm}[Special values formula for $\ZZ$-constructible sheaves]\label{special_values_thm}
Let $F$ be a $\ZZ$-constructible sheaf. Then
\begin{align*}
    r_U(F) &= E_U(F)\\
    L^\ast_U(F,0)&=(-1)^{\mathrm{rank}_\ZZ F(K)}\chi_U(F)
\end{align*}
\end{thm}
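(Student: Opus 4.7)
The strategy is to prove both assertions by dévissage on $F$, reducing to base cases already established. All four quantities $r_U(F)$, $E_U(F)$, $L_U^\ast(F,0)$, and $(-1)^{\mathrm{rank}_\ZZ F(K)} \chi_U(F)$ transform compatibly under the dévissage operations. For a short exact sequence $0 \to F \to G \to H \to 0$ of $\ZZ$-constructible sheaves, $L^\ast_U$ and $\chi_U$ are multiplicative while $r_U$, $E_U$, and $\mathrm{rank}_\ZZ(-)(K) \pmod 2$ are additive. The last compatibility is the nontrivial one and rests on showing that $H^1(G_K, F_\eta)$ is torsion for every finitely generated continuous discrete $G_K$-module $F_\eta$, which I would prove by Artin induction on $F_\eta$ (torsion-free case: Shapiro's lemma reduces to $H^1(G_L, \ZZ) = \Hom_{cont}(G_L, \ZZ) = 0$) combined with the torsion case (obvious). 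Under $\pi_\ast$ for the normalization $\pi \colon V \to U$ in a finite Galois extension $L/K$ and under $j_!$ for an open immersion $j \colon V \to U$, all four quantities are preserved, using $(\pi_\ast F)(K) = F(L)$ (Shapiro), $(j_!F)(K) = F(K)$, and the functoriality propositions of Section 6.3.

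\medskip

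\textbf{Base cases and assembly.} The theorem is established in three base cases: (i) $F = \ZZ$ on $U$ via the classical analytic class number formula (Propositions \ref{euler_vs_zeta} and \ref{euler_vs_zeta_proper_curve}), where $\mathrm{rank}_\ZZ \ZZ(K) = 1$ accounts for the $-$ sign; (ii) $F$ constructible (Proposition \ref{euler_constructible}), where both sides equal $1$ and the rank is $0$; (iii) $F = i_\ast M$ for a closed point $i \colon v \to U$ and finitely generated discrete $G_v$-module $M$, by a direct computation: $L_U(i_\ast M, s) = \det(I - N(v)^{-s}\mathrm{Frob}_v \mid M_\CC)^{-1}$, and since $\mathrm{Frob}_v$ acts diagonalizably on $M_\CC$ through a finite cyclic quotient, the order at $s=0$ equals $-\mathrm{rank}_\ZZ(M^{G_v}) = E_U(i_\ast M)$ (using that $H^1(G_v, M)$ is torsion), while the leading coefficient $(\log N(v))^{-k} \det(1 - \mathrm{Frob}_v \mid M_\CC / M_\CC^{\mathrm{Frob}_v})^{-1}$ matches Corollary \ref{euler_vs_point} by a standard identification on the cyclic cohomology of $M$. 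To assemble for arbitrary $F$: split off torsion via $0 \to F_{tor} \to F \to F/F_{tor} \to 0$; for the torsion-free quotient choose an open $V \subset U$ on which it is locally constant and use the open-closed decomposition to reduce to $F|_V$ on $V$ plus a sum of type (iii) sheaves at the complement; finally apply Artin induction to $F_\eta$ and spread out to produce, on a smaller open $V' \subset V$, a short exact sequence
\[
0 \to F|_{V'}^n \oplus \bigoplus (\pi_i)_\ast \ZZ \to \bigoplus (\pi_j')_\ast \ZZ \to G \to 0
\]
with $G$ constructible, reducing all terms except $F|_{V'}^n$ to base cases.

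\medskip

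\textbf{Main obstacle: the $n$-th root step.} The assembly directly yields only the $n$-th power relation $L_U^\ast(F|_{V'},0)^n = ((-1)^{\mathrm{rank}_\ZZ F(K)}\chi_U(F|_{V'}))^n$. For $n$ odd, taking real $n$-th roots finishes the proof. For $n$ even, only the absolute value $|L_U^\ast(F|_{V'},0)| = \chi_U(F|_{V'})$ is immediate, and the sign of $L_U^\ast(F|_{V'},0)$ must be determined separately. The sign $\epsilon(F) := L_U^\ast(F,0) / ((-1)^{\mathrm{rank}_\ZZ F(K)}\chi_U(F)) \in \{\pm 1\}$ is multiplicative under short exact sequences, invariant under $\pi_\ast$ and $j_!$, and equals $+1$ on all three base cases; the hard part will be to conclude $\epsilon \equiv 1$ given that Artin induction only produces $\epsilon(F|_{V'})^n = 1$. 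I would attempt this either by combining several Artin-induction decompositions whose exponents have odd greatest common divisor (forcing $\epsilon = +1$ directly), or by an independent sign computation, e.g.\ via the functional equation of Artin $L$-functions at $s = 0$.
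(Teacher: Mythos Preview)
Your dévissage strategy and base cases match the paper's proof exactly. One minor difference: for the closed-point case $i_\ast M$, the paper applies a second Artin induction, this time on the $G_v$-module $M$, reducing to $M=\ZZ$; since every finite quotient of $G_v\simeq\hat\ZZ$ is cyclic and permutation modules generate $R_\QQ$ of a cyclic group \emph{integrally} (Möbius inversion on $[\QQ[C_n/C_m]]$), no exponent $n>1$ appears and the sign issue does not arise there. Your direct computation of the local factor is also valid but requires checking that $\det(1-\varphi\mid M_\CC/M_\CC^{\varphi})^{-1}$ agrees with the formula of Corollary~\ref{euler_vs_point}, which amounts to a careful comparison of $R(M)$, $[H^1(G_v,M)]$, and $[(M^{G_v})_{tor}]$ with the characteristic polynomial of $\varphi$.

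You are right to isolate the $n$-th-root sign problem at the generic-point Artin induction as the crux; the paper's own write-up simply says ``by the dévissage argument we reduce to\ldots'' and does not address it. Your first proposed fix (combining several decompositions with coprime exponents) cannot succeed in general: the subgroup of $R_\QQ(G)$ spanned by permutation modules can have even index, so the Artin exponent need not be odd. Your second suggestion, a direct determination of $\mathrm{sign}\,L^\ast_U(F,0)$, is the viable route. One way to carry it out is to observe that the sign discrepancy $\epsilon(F)$ depends only on the class of $F_\eta\otimes\QQ$ in $R_\QQ(G_K)$ (closed-point and constructible contributions have $\epsilon=1$), and then to compute the sign of the Artin $L$-function at $s=0$ via the functional equation together with the known location of its trivial zeros; alternatively, over function fields Grothendieck's cohomological formula for $L_U(F,s)$ gives the sign directly. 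Either way, this step requires genuine additional input beyond the dévissage, and your identification of it as the ``main obstacle'' is accurate---indeed more candid than the paper's own account.
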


\begin{proof}
Let us remark that the quantity $\mathrm{rank}_\ZZ F(K)$ behaves well:

\begin{lem}
	\begin{enumerate}
		\item If $0\to F\to G \to H\to 0$ is a short exact sequence of $\ZZ$-constructible sheaves then $\mathrm{rank}_\ZZ G(K)=\mathrm{rank}_\ZZ F(K)+\mathrm{rank}_\ZZ H(K)$.
		\item Let $\pi:U'\to U$ denote the normalization of $U$ in a finite Galois extension $L/K$ and let $F$ be a $\ZZ$-constructible sheaf on $U'$. Then $(\pi_\ast F)(K)=F(L)$.
		\item Let $j:V\subset U$ be an open subscheme and let $F$ be a $\ZZ$-constructible sheaf on $V$. Then $(j_!F)(K)=F(K)$.
	\end{enumerate}
\end{lem}

\begin{proof}
The first point comes from the fact that Galois cohomology is torsion, which gives a short exact sequence
\[
0 \to F(K)\otimes Q \to G(K)\otimes \QQ \to H(K)\otimes \QQ \to 0.
\]
The two others are straightforward computations.
\end{proof}

Given now the functorial properties of the quantities involved, using the dévissage argument we reduce to the special cases of a constructible sheaf, which is the previous proposition, of the constant sheaf $\ZZ$ on $\Spec(\mathcal{O}_{K,S}$ or on a regular proper curve, which are prop. \ref{euler_vs_zeta} and prop. \ref{euler_vs_zeta_proper_curve}, and of a pushforward $i_\ast M$ of a finite type torsion-free discrete $G_v$-module for $i:v\hookrightarrow U$ a closed point.

Let $v=\Spec(\FF_q)$ be the spectrum of a finite field and let $M$ be a finite type $G_v$-module. Define the Weil-étale Euler characteristic of $M$ by $\chi_v(M):=\chi_{\Spec(\ZZ)}(f_\ast M)$ where $f:v \to \Spec(\ZZ)$ is the structural morphism. Then $f$ decomposes as $v \xrightarrow{\pi} v_0 \xrightarrow{i} \Spec(\ZZ)$ with $v_0=\Spec(\FF_p)$. Since $\pi$ is finite étale, we have 
\[
Rf^! \GG_m = \pi^\ast Ri^! \GG_m=\pi^\ast \ZZ[-1]=\ZZ[-1]
\]
Moreover $R\Gamma_c(\Spec(\ZZ),f_\ast M)=R\Gamma(v,M)$ and the canonical map $\ZZ=H^0(G_v,\ZZ) \to H^0(G_{v_0},\ZZ)=\ZZ$ is multiplication by $[\FF_q:\FF_p]$. It follows using prop. \ref{euler_vs_arbitrary} and \ref{euler_vs_point}, and an analysis similar to the one in paragraph \ref{case_sheaf_closed_point} (to compute $R(f_\ast M)$) that $\chi_v(M)=\chi_U(i_\ast M)$ as soon as $i:v\hookrightarrow U$ realizes $v$ as a closed point of $U$.
If $\pi : w=\Spec(\FF_{q^n}) \to v$ is a finite morphism and $M$ is a finite type $G_w$-module, we immediately find $\chi_v(\mathrm{ind}_{G_v}^{G_w}M)=\chi_w(M)$. 

We have to show that $\mathrm{ord}_{s=0}L_v(M,s)=E_U(i_\ast M)$ and
\[
L_v^\ast(M,0)=\chi_v(M)
\]
for $M$ a finite type torsion-free discrete $G_v$-module; since $\chi_v$ and $E_U(i_\ast M)=\mathrm{rank}_\ZZ H^0(G_v,M)$ behave well with induction, and the case of $M$ finite was already treated, by Artin induction we reduce to $M=\ZZ$. Then $L_v(\ZZ,s)=1/(1-q^{-s})$, so $L_v^\ast(\ZZ,0)=1/\log q$ with order $1$; on the other hand, $\mathrm{rank}_\ZZ H^0(G_v,\ZZ)=1$ and prop. \ref{euler_vs_point} gives $\chi_v(\ZZ)=1/\log(q)$. This concludes the proof.
\end{proof}

\begin{rmk}
\begin{itemize}
\item[]
\item The identification of the sign in Theorem \ref{special_values_thm} was inspired by \cite{Geisser20}.
\item The formula for proper smooth curves was already shown in \cite[3.1]{Geisser20}; note that the $\log(q)$ factor in \textit{ibid.} is integrated in our Weil-étale Euler characteristic, coming from the regulator pairing.
\end{itemize}
\end{rmk}

\section{Twisted Artin \texorpdfstring{$L$}{L}-functions of integral \texorpdfstring{$G_K$}{GK}-representations}\label{twisted_L_functions}

We obtain as a corollary a result generalizing and precising \cite{Tran16} ; we allow singular points, we work also on curves, there is no need for a correcting factor for the $2$-torsion, and we identify the sign.

Let $K$ be a global field and $X$ be an irreducible scheme, finite type over $\ZZ$, of dimension $1$ and with residue field at the generic point $K$. The morphism $X\to \Spec(\ZZ)$ is either dominant or factors through $\Spec(\FF_p)$, so $X$ is either a curve over $\FF_p$ or $X\to \Spec(\ZZ)$ is quasi-finite, hence by Zariski's main theorem is the open subscheme of the spectrum of a finite $\ZZ$-algebra, thus if $X$ is reduced of an order in $K$. The singular locus of $X$ will be denoted $Z$ and the regular locus $U$.

Following \cite{Deninger87}, define the complex of sheaves
\[
\mathbb{G}_X=\left[ (g_\eta)_\ast \GG_{m,\eta} \to \bigoplus_{v\in X_{0}}(i_v)_\ast \ZZ \right]
\]
where $X_{0}$ denotes the closed points, $\eta$ is the generic point and $g_\eta$ (resp. $i_v$) is the canonical morphism $\Spec(\mathcal{O}_{X,\eta}) \to X$ (resp. $v\to X$). If $X$ is regular, we have $\mathbb{G}_X\simeq \GG_{m,X}$ \cite[21.6.9]{EGAIV4}.

Let $\pi: Y\to X_{red}\to X$ denote the normalization of $X_{red}$. Define cohomology with compact support as in \cite[II.6]{ADT}, through the fiber sequence:
\[
R\Gamma_c(X,-) \to R\Gamma(X,-) \to \prod_{v \in S} R\Gamma(K_v,(-)_{\eta_v})
\]
where $S$ is the set of places of $K$ not corresponding to a point of $Y$. The corresponding cohomology groups will be denoted again $H^i_c(X,-)$; propositions \ref{milne_II.2.3} and \ref{compact_support_finite_type}, as well as Artin-Verdier duality (theorem \ref{AVduality}) still hold ; this is easily seen by comparing $X$ to its regular locus $U$.

\begin{cor}\label{special_value_zeta_singular}
	Let $f : X \to T$ denote either a quasi-finite morphism $X\to T:=\mathbb{P}^1_{\FF_p}$ (in the function field case), which exists by the projective Noether normalization lemma, or the quasi-finite structural morphism $X\to T:=\Spec(\ZZ)$ (in the number field case); then $f_!$ is well-defined. Let $F$ be a $\ZZ$-constructible sheaf on $X$. We have the order and special value formula at $s=0$:
	\begin{align*}
	& r_{X}(F)=E_T(f_! F)\\
	& L_{X}^\ast(F,0)=(-1)^{\mathrm{rank}_\ZZ F(K)} \chi_T(f_! F)
	\end{align*}
	Explicitly, we have:
	\begin{align*}
	E_T(f_!F) &= \mathrm{rank}_\ZZ H^1_c(X,F)-\mathrm{rank}_\ZZ H^0_c(X,F) = \mathrm{rank}_\ZZ \Hom_X(F,\GG_X)-\mathrm{rank}_\ZZ \Ext^1(F,\GG_X)\\
	\chi_T(f_!F)&= \frac{R(F)[\Hom_X(F,\GG_X)_{tor}][H^0_c(X,F)_{tor}]}{[\Ext^1_X(F,\GG_X)_{tor}][H^1_c(X,F)_{tor}]}
	\end{align*}
	where $R(F):=\frac{R_1(F)}{R_0(F)}$ with $R_0(F)$ is the absolute value of the determinant of the pairing
	\[
	H^0_c(X,F)_\RR \times \Ext^1_U(F,\GG_X)_\RR \to \RR
	\]
	and $R_1(g_\ast M)$ the absolute value of the determinant of the pairing
	\[
	H^1_c(X,F)_\RR \times \Hom_X(F^,\GG_X)_\RR \to \RR
	\]
	in bases modulo torsion.
\end{cor}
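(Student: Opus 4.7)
The plan is to apply Theorem~\ref{special_values_thm} to the $\ZZ$-constructible sheaf $f_!F$ on the regular base $T$, and then rewrite the resulting invariants intrinsically on $X$ via Deninger's complex $\GG_X$. First, by Zariski's main theorem the quasi-finite morphism $f$ factors as $X \xrightarrow{j} Y \xrightarrow{g} T$ with $j$ an open immersion and $g$ finite, so $f_! = g_\ast \circ j_!$. The $L$-function functoriality established earlier gives $L_X(F,s) = L_Y(j_!F,s) = L_T(f_!F,s)$, and the lemma appearing in the proof of Theorem~\ref{special_values_thm} yields $\mathrm{rank}_\ZZ (f_!F)(K_T) = \mathrm{rank}_\ZZ F(K)$, where $K_T$ is the function field of $T$. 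Hence Theorem~\ref{special_values_thm} applied to $f_!F$ on $T$ immediately produces $r_X(F) = E_T(f_!F)$ and $L_X^\ast(F,0) = (-1)^{\mathrm{rank}_\ZZ F(K)}\chi_T(f_!F)$.

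Next I translate the invariants on $T$ appearing in Proposition~\ref{euler_vs_arbitrary} into invariants on $X$. On the compact-support side, combining proposition~\ref{milne_II.2.3}.d for $j$ and \ref{milne_II.2.3}.e for $g$, together with the definition of $R\Gamma_c(X,-)$ recalled just before the corollary, yields a canonical isomorphism $R\Gamma_c(T, f_!F) \simeq R\Gamma_c(X,F)$. On the dual side, the adjunction $f_! \dashv f^!$ gives $R\Hom_T(f_!F, \GG_{m,T}) \simeq R\Hom_X(F, f^!\GG_{m,T})$, so the essential computation is the identification $f^!\GG_{m,T} \simeq \GG_X$. On the regular locus $U$ of $X$ this follows from relative purity for the quasi-finite map of regular $1$-dimensional schemes $U \to T$, giving $f^!\GG_{m,T}\big|_U \simeq \GG_{m,U}$. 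At each singular closed point of $X$, a local henselian computation matches the contribution to $f^!\GG_{m,T}$ with the term $(i_v)_\ast \ZZ$ entering the two-term complex $\big[(g_\eta)_\ast\GG_{m,\eta} \to \bigoplus_v (i_v)_\ast \ZZ\big]$ that defines $\GG_X$. The independence from the choice of $f$ is then automatic, since $\GG_X$ is intrinsic to $X$ (being identified with a shift of $\ZZ^c_X(0)$ by \cite{Deninger87}).

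Once the identification $f^!\GG_{m,T} \simeq \GG_X$ is in hand, the Yoneda functoriality used throughout Section~\ref{regulator_pairing} transports the regulator pairing of $f_!F$ on $T$ to the pairing between $R\Gamma_c(X,F)$ and $R\Hom_X(F,\GG_X)$ occurring in the statement, so in particular $R(f_!F) = R(F)$. Substituting these identifications into Proposition~\ref{euler_vs_arbitrary} yields the stated explicit formulas for $E_T(f_!F)$ and $\chi_T(f_!F)$. The main obstacle is precisely the identification $f^!\GG_{m,T} \simeq \GG_X$: purity dispatches the regular locus, but the local analysis at each singular point is the heart of the argument, as the ad hoc two-term description of $\GG_X$ must be shown to coincide with the output of the six-functor formalism; the subsequent compatibility with the regulator and Artin-Verdier pairings then follows formally from the Yoneda construction.
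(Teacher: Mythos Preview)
Your overall strategy coincides with the paper's: apply Theorem~\ref{special_values_thm} to $f_!F$ on the regular target $T$, then rewrite everything intrinsically on $X$ using $R\Gamma_c(T,f_!F)\simeq R\Gamma_c(X,F)$ and $R\Hom_T(f_!F,\GG_{m,T})\simeq R\Hom_X(F,\GG_X)$, the latter via $Rf^!\GG_{m,T}\simeq\GG_X$. The paper's proof is essentially those two sentences plus an appeal to Proposition~\ref{euler_vs_arbitrary}.

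The one substantive divergence is how you justify $Rf^!\GG_{m,T}\simeq\GG_X$. The paper does not decompose $X$ into its regular and singular loci; instead it uses that $T$ is regular so $\GG_{m,T}\simeq\GG_T$, and then invokes the finite base change theorem applied to the constituents of the two-term complex $\GG_T=[(g_{\eta_T})_\ast\GG_{m,\eta_T}\to\bigoplus_w(i_w)_\ast\ZZ]$ to obtain $Rf^!\GG_T\simeq\GG_X$ directly (this is the functoriality of Deninger's complex, cf.\ \cite{Deninger87}). Your route via purity on $U$ and a local henselian analysis at singular points should ultimately give the same identification, but it is more laborious, and your description of the singular-point step is only a sketch: note that the terms $(i_v)_\ast\ZZ$ in $\GG_X$ are present at \emph{all} closed points, not just singular ones, so the matching is not simply ``regular locus gives $\GG_m$, singular points give the $\ZZ$-summands''. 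The paper's argument sidesteps this by treating $\GG_T$ as a single object with known behavior under $Rf^!$.

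A minor point: your appeal to the lemma inside the proof of Theorem~\ref{special_values_thm} for $\mathrm{rank}_\ZZ(f_!F)(K_T)=\mathrm{rank}_\ZZ F(K)$ is correct in spirit, but that lemma is stated for normalizations in Galois extensions, whereas $g$ need not be of that form; the rank equality still holds (Shapiro's lemma on the generic fiber), so this is only a matter of stating the slightly more general fact.
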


\begin{proof}
	We have $L_T(f_!F,s)=L_X(F,s)$ and $R\Gamma_c(T,f_ !F)=R\Gamma_c(X,F)$. The finite base change theorem implies that 
	\[
	Rf^! \GG_{m,T}=Rf^! \GG_T = \GG_X,
	\]
	which shows that $R\Hom_T(f_!g_\ast M,\GG_m)=R\Hom_X(g_\ast M, \GG_X)$. Theorem \ref{special_values_thm} then gives the first result, and for the second it suffices to apply proposition \ref{euler_vs_arbitrary}.
\end{proof}

Denote $g:\Spec(K) \to X$ the canonical morphism, and let $M$ be a finite type discrete $G_K$-module.
\begin{defi}
	The Artin L-function of $M$ (twisted by $X$) is defined as the Euler product
	\[
	L_{K,X}(M,s):=L_X(g_\ast M,s)=\prod_{v \in X_{0}} L_v(g_\ast M,s)
	\]
	where $X_0$ are the closed points of $X$, with local factors
	\[
	L_v(g_\ast M,s)=\det(1-N(v)^{-s}\mathrm{Frob}_v|(g_\ast M)_v\otimes\CC)
	\]
	where as before we denote $(-)_v$ the pullback to $v$ and $\mathrm{Frob}_v\in G_v:=\mathrm{Gal}(\kappa(v)^{sep}/\kappa(v))$ is the (arithmetic) Frobenius.
	
	Define $r_{K,X}(M)$ the order at $s=0$ and $L_{K,X}^\ast(M,0)$ the special value at $s=0$.
\end{defi}

\begin{prop}
	If $v$ is a non-singular point, thus corresponding to a place of $K$, then 
	\[
	(g_\ast M)_v=M^{I_v}
	\]
	with its natural $G_v$-action, where $I_v$ is the inertia subgroup of a place $\bar{v}$ of $K^{sep}$ above $v$.
\end{prop}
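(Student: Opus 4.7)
The plan is to compute the stalk of $g_\ast M$ at a geometric point $\bar v$ above $v$ by unwinding the definition via the strict henselization. Fix the embedding $K^{sep}\hookrightarrow K_v^{sep}$ corresponding to $\bar v$; this realizes $D_v\subseteq G_K$ and the inertia $I_v\subseteq D_v$ as subgroups of $G_K$. Because $v$ is regular and $X$ is of dimension $1$, the local ring $\mathcal{O}_{X,v}$ is a discrete valuation ring, so the strict henselization $\mathcal{O}_{X,\bar v}^{sh}$ is again a DVR with residue field $\kappa(v)^{sep}$. Its fraction field is the maximal unramified extension $K_v^{sh}$ of the henselian local field $K_v$, and by Galois theory of local henselian fields we have $\Gal(K_v^{sep}/K_v^{sh})=I_v$, i.e.\ $(K^{sep})^{I_v}=K^{sep}\cap K_v^{sh}$.

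Next I would compute the stalk using the standard pro-representability formula
\[
(g_\ast M)_{\bar v}=\varinjlim_U M(U\times_X\Spec(K)),
\]
where $U$ ranges over étale neighborhoods of $\bar v$. The limit of such $U$ is $\Spec(\mathcal{O}_{X,\bar v}^{sh})$, and since inverting a uniformizer of $\mathcal{O}_{X,v}$ in $\mathcal{O}_{X,\bar v}^{sh}$ yields $K_v^{sh}$, this pro-object becomes $\Spec(K_v^{sh})$. The discrete $G_K$-module $M$ evaluated on a finite subextension $L_\alpha/K$ of $K^{sep}$ yields $M^{G_{L_\alpha}}$, so
\[
(g_\ast M)_{\bar v}=\varinjlim_\alpha M^{G_{L_\alpha}}=\bigcup_\alpha M^{G_{L_\alpha}},
\]
with $L_\alpha$ ranging over the finite subextensions of $K^{sep}$ contained in $K_v^{sh}$ (via the fixed embedding). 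These $L_\alpha$ exhaust $K^{sep}\cap K_v^{sh}=(K^{sep})^{I_v}$, so $\bigcap_\alpha G_{L_\alpha}=I_v$ and $\bigcup_\alpha M^{G_{L_\alpha}}=M^{I_v}$.

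Finally I would verify the claimed action: the stalk of any étale sheaf at a point $v$ with residue field $\kappa(v)$ is naturally a discrete $G_v$-module, with $G_v$ acting through its action on the geometric points above $v$. Concretely, the action on $(g_\ast M)_{\bar v}=M^{I_v}$ is the one induced by the conjugation action of $D_v$ on $M^{I_v}$, which factors through $D_v/I_v\simeq G_v$. This matches the statement. The only substantive step is the identification of the pro-étale neighborhood with $\Spec(K_v^{sh})$ and the attendant Galois-theoretic computation of $\bigcap_\alpha G_{L_\alpha}$; the rest is a formal unwinding of definitions.
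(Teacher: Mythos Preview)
Your proof is correct and follows essentially the same approach as the paper: both compute the stalk via the strict henselization $\mathcal{O}_{X,\bar v}^{sh}$, identify its fraction field with the maximal unramified extension fixed by $I_v$, and apply the stalk formula for pushforwards to obtain $(g_\ast M)_{\bar v}=M^{I_v}$ with its $D_v/I_v\simeq G_v$-action. Your write-up simply spells out the colimit over étale neighborhoods more explicitly.
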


\begin{proof}
	This is local so we can suppose $X$ is affine and regular. Fix an embedding $K^{sep}\hookrightarrow K_v^{sep}$ ; this determines an extension $\bar{v}$ of $v$ on $K ^{sep}$. Let $\mathcal{O}_{v}^{sh}$ be the strict henselization of $X$ at $v$; then $\mathcal{O}_{v}^{sh}$ is the integral closure of $\mathcal{O}_X(X)$ in the ring of integers of the maximal unramified extension $K_v^{un}$ of the completion of $K$ at $v$. Denote $K_v^{sh}$ the field of fractions. The above embedding identifies $K_v^{sh}$ with the subfield of $K^{sep}$ fixed by the inertia subgroup $I_v$. The formula for stalks of pushforwards then gives
	\[
	(g_\ast M)_v=H^0(K_v^{sh},M)=M^{I_v}
	\]
	endowed with its natural $G_v\simeq D_v/I_v$-action, where $D_v=\{\sigma, \sigma \bar{v}=\bar{v}\}\subset G_K$ is the decomposition group. 
\end{proof}

\begin{rmk}
	\begin{itemize}
		\item[]
		\item If $X$ is regular, the twisted Artin L-function is by the above proposition the usual Artin L-function associated to $S$ :
		\[
		L_{K,X}(M,s)=L_S(M,s)=\prod_{v \notin S}\det(1-N(v)^{-s}\mathrm{Frob}_v|(M\otimes\CC)^{I_v})
		\]
		\item For $M=\ZZ$ we have $g_\ast \ZZ =\ZZ$ and $L_{K,X}(\ZZ,s)=\zeta_X(s)$ is the arithmetic zeta function of $X$.
		\item The singular locus $Z$ of $X$ is finite and if $X$ is regular, it can be completed with finitely many regular points to become proper. Thus the L-function twisted by $X$ differs by a finite number of factors from the ordinary Artin L-function.
	\end{itemize}
\end{rmk}

\begin{cor}\label{special_value_twisted_artin}
Let $f : X \to T$ denote either a quasi-finite morphism $X\to T:=\mathbb{P}^1_{\FF_p}$ (in the function field case), which exists by the projective Noether normalization lemma, or the quasi-finite structural morphism $X\to T:=\Spec(\ZZ)$ (in the number field case); then $f_!$ is well-defined. We have the order and special value formula at $s=0$:
	\begin{align*}
	& r_{K,X}(M)=E_T(f_! g_\ast M)\\
	& L_{K,X}^\ast(M,0)=(-1)^{\mathrm{rank}_\ZZ M} \chi_T(f_!g_\ast M)
	\end{align*}
\end{cor}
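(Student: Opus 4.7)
The plan is to deduce the corollary as a direct application of Corollary \ref{special_value_zeta_singular} to the $\ZZ$-constructible sheaf $F := g_\ast M$ on $X$. First I would verify $\ZZ$-constructibility of $g_\ast M$: choosing a finite Galois extension $L/K$ trivializing $M$ and spreading out to the normalization $Y \to X_{red} \to X$ (or a refinement thereof), there is a dense open $V \subseteq X$ over which $(g_\ast M)_{|V}$ is locally constant of finite type, while at each of the finitely many remaining closed points $v$ the stalk $(g_\ast M)_v$ is finitely generated as a $G_v$-module --- computed for instance via the strict henselization, as recorded in the proposition just before the statement (which identifies the stalk at a non-singular point with $M^{I_v}$).

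Next, unpacking the definitions, the Euler product defining $L_{K,X}(M, s)$ coincides factor by factor with $L_X(g_\ast M, s)$, so $r_{K,X}(M) = r_X(g_\ast M)$ and $L^\ast_{K,X}(M, 0) = L^\ast_X(g_\ast M, 0)$. Applying Corollary \ref{special_value_zeta_singular} with $F = g_\ast M$ directly yields
\[
r_{K,X}(M) = E_T(f_! g_\ast M), \qquad L^\ast_{K,X}(M, 0) = (-1)^{\mathrm{rank}_\ZZ (g_\ast M)(K)} \chi_T(f_! g_\ast M).
\]
Since $g$ is the inclusion of the generic point of $X$, the unit $g^\ast g_\ast M \to M$ is an isomorphism, so $(g_\ast M)_\eta = M$ as $G_K$-modules and $(g_\ast M)(K) = M^{G_K}$. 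This settles the order formula $r_{K,X}(M) = E_T(f_! g_\ast M)$ immediately, and the special value formula up to identification of the sign exponent.

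The only remaining point, and the one I expect to be the main obstacle, is the identification of $(-1)^{\mathrm{rank}_\ZZ M^{G_K}}$ with $(-1)^{\mathrm{rank}_\ZZ M}$. This parity identity is not valid for an arbitrary discrete $G_K$-module of finite type --- for instance, $M = \ZZ$ with $G_K$ acting through a nontrivial quadratic character has $M^{G_K} = 0$ but $\mathrm{rank}_\ZZ M = 1$ --- so either the statement should be read with $\mathrm{rank}_\ZZ M$ tacitly replaced by $\mathrm{rank}_\ZZ M^{G_K}$ (which is what the functoriality of the sign in Theorem \ref{special_values_thm} naturally produces), or the sign has to be tracked through the Artin induction dévissage used to prove that theorem: there one reduces to $M = \mathrm{ind}_{G_L}^{G_K} \ZZ$, for which both the functional equation of the associated Dedekind zeta factor of $L$ and the behaviour of $\pi_\ast$ under the Weil-étale Euler characteristic pin down a single sign convention, and I would verify compatibility at this bottom of the dévissage.
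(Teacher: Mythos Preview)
Your approach is exactly the paper's: the entire proof there is the single sentence ``The sheaf $g_\ast M$ is $\ZZ$-constructible,'' i.e.\ apply Corollary~\ref{special_value_zeta_singular} with $F=g_\ast M$. Your verification of $\ZZ$-constructibility and of $(g_\ast M)_\eta=M$ is more explicit than what the paper writes but follows the same line.

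Your hesitation about the sign is well founded, and your counterexample is correct. Corollary~\ref{special_value_zeta_singular} (and Theorem~\ref{special_values_thm} which underlies it) produces the exponent $\mathrm{rank}_\ZZ F(K)=\mathrm{rank}_\ZZ (g_\ast M)(K)=\mathrm{rank}_\ZZ M^{G_K}$, not $\mathrm{rank}_\ZZ M$; the lemma inside the proof of Theorem~\ref{special_values_thm} tracks precisely $\mathrm{rank}_\ZZ F(K)$ through the d\'evissage, and for $\pi_\ast$ it records $(\pi_\ast F)(K)=F(L)$, i.e.\ invariants under the smaller Galois group, never the full rank of the underlying module. For $M=\ZZ$ with $G_K$ acting through a nontrivial quadratic character these two parities differ, so the exponent $\mathrm{rank}_\ZZ M$ in the stated corollary cannot be derived from the cited input and appears to be a slip for $\mathrm{rank}_\ZZ M^{G_K}$. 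There is no further argument to supply here; read the statement with $M^{G_K}$ in place of $M$ and your proof is complete.
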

\begin{proof}
	The sheaf $g_\ast M$ is $\ZZ$-constructible.
\end{proof}

\begin{cor}\label{special_value_zeta_order}
	Suppose $X$ is affine. We have the order and special value formula at $s=0$ for the arithmetic zeta function of $X$:
	\begin{align*}
	\mathrm{ord}_{s=0}\zeta_X  &= \mathrm{rank}_\ZZ(CH_0(X,1))\\
	\zeta_X^\ast(0) &= - \frac{[\mathrm{CH}_0(X)]R_X}{\omega}
	\end{align*}
	where $R_X$ is the absolute value of the determinant of the regulator pairing
	\[
	H^1_c(X,\ZZ)_\RR \times \mathrm{CH}_0(X,1)_\RR \to \RR
	\]
	after a choice of bases modulo torsion and $\omega$ is the number of roots of unity in $K$.
\end{cor}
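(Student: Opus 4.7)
The plan is to apply Corollary \ref{special_value_twisted_artin} to the trivial Galois module $M = \ZZ$. Since $g_\ast \ZZ = \ZZ$ on $X$ we have $L_{K,X}(\ZZ,s) = \zeta_X(s)$, and since $\mathrm{rank}_\ZZ \ZZ = 1$ the sign $(-1)^{\mathrm{rank}_\ZZ M}$ becomes $-1$. It then remains to evaluate $E_T(f_!\ZZ)$ and $\chi_T(f_!\ZZ)$ using the explicit formulas from Corollary \ref{special_value_zeta_singular}, and to translate the cohomological quantities appearing there into the higher-Chow-theoretic terms of the statement.

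The central input is the identification, in the derived category, of $\GG_X$ with (a shift of) Bloch's cycle complex $\ZZ^c_X(0)$, attributed to Deninger and alluded to in the introduction. This yields
\[
\Hom_X(\ZZ,\GG_X) \simeq \mathrm{CH}_0(X,1), \qquad \Ext^1_X(\ZZ,\GG_X) \simeq \mathrm{CH}_0(X),
\]
together with $\Hom_X(\ZZ,\GG_X)_{tor} = \mu(K)$. For regular $X$ this is immediate from $\GG_X \simeq \GG_m$ and the classical identifications $H^0(X,\GG_m) = \mathcal{O}_X(X)^\times = \mathrm{CH}_0(X,1)$ and $H^1(X,\GG_m) = \mathrm{Pic}(X) = \mathrm{CH}_0(X)$, together with $(\mathcal{O}_X(X)^\times)_{tor} = \mu(K)$. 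For singular $X$ I would reduce to the regular case via the normalization $\pi: Y \to X_{red}$ and the defining presentation of $\GG_X$ as a two-term complex.

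Next I handle the compact-support terms. Since $X$ is affine, the missing set $S$ is nonempty, so the long exact sequence
\[
0 \to H^0_c(X,\ZZ) \to \ZZ \xrightarrow{\Delta} \prod_{v\in S}\ZZ \to H^1_c(X,\ZZ) \to H^1(X,\ZZ)
\]
gives $H^0_c(X,\ZZ) = 0$, and profiniteness of $\pi_1^{et}(X)$ (reducing to the normalization in the singular case) yields $H^1(X,\ZZ) = 0$, so $H^1_c(X,\ZZ)$ is torsion-free. Classical finiteness of the class group for regular affine $X$, extended to the singular case via the exact sequence linking $\mathrm{CH}_0(X)$ to the class group of the normalization, shows that $\mathrm{CH}_0(X)$ is finite, so $\mathrm{rank}_\ZZ \mathrm{CH}_0(X) = 0$ and $E_T(f_!\ZZ) = \mathrm{rank}_\ZZ \mathrm{CH}_0(X,1)$. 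Plugging these into the formula for $\chi_T(f_!\ZZ)$ yields $[\mathrm{CH}_0(X)] R_X/\omega$, where the regulator $R_X$ of the statement coincides with the one in Corollary \ref{special_value_zeta_singular} via the identifications $R\Gamma_c(T,f_!\ZZ) \simeq R\Gamma_c(X,\ZZ)$ and $R\Hom_T(f_!\ZZ,\GG_m) \simeq R\Hom_X(\ZZ,\GG_X)$ (using $Rf^!\GG_m = \GG_X$ and functoriality of the regulator pairing). Combined with the minus sign from Corollary \ref{special_value_twisted_artin}, this matches the statement.

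The main obstacle will be making precise the identification of $\GG_X$ with (a shift of) $\ZZ^c_X(0)$ in the singular case, and in particular verifying the torsion identification $\mathrm{CH}_0(X,1)_{tor} = \mu(K)$; once this geometric input is in hand, the rest is a routine bookkeeping of ranks, torsion orders, and determinants in the formulas already established.
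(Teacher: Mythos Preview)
Your overall strategy matches the paper's: specialize the general formula to $F=\ZZ$, identify $\Ext^i_X(\ZZ,\GG_X)$ with higher Chow groups, and control the compact-support groups $H^i_c(X,\ZZ)$. Two points deserve comment.

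First, the identification $\Hom_X(\ZZ,\GG_X)\simeq\mathrm{CH}_0(X,1)$ and $\Ext^1_X(\ZZ,\GG_X)\simeq\mathrm{CH}_0(X)$ does not require the full comparison of $\GG_X$ with the cycle complex. The paper simply computes the hypercohomology of the two-term complex $\GG_X=[(g_\eta)_\ast\GG_{m,\eta}\to\bigoplus_{v}(i_v)_\ast\ZZ]$ directly, using $R^q g_\ast\GG_m=0$ for $q>0$: this gives $H^0(X,\GG_X)=\ker(K^\times\to\bigoplus_v\ZZ)$ and $H^1(X,\GG_X)=\mathrm{coker}(K^\times\to\bigoplus_v\ZZ)$, which are $\mathrm{CH}_0(X,1)$ and $\mathrm{CH}_0(X)$ by definition. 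The torsion identification $\mathrm{CH}_0(X,1)_{tor}=\mu(K)$ is then immediate. Your route through $\ZZ^c_X(0)$ is correct in spirit but unnecessarily heavy.

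Second, and more seriously, your claim that $H^1(X,\ZZ)=0$ is \emph{false} when $X$ is singular. For instance, an affine nodal curve with one node and two branches has $H^1(X,\ZZ)\simeq\ZZ$: the short exact sequence $0\to\ZZ\to\pi_\ast\ZZ\to i_{v,\ast}(\ZZ^2/\ZZ)\to 0$ gives $0\to\ZZ\to\ZZ\to\ZZ\to H^1(X,\ZZ)\to 0$, and the middle map is zero. So the profiniteness argument, which requires normality, does not transfer. The paper avoids this by taking the short exact sequence
\[
0\to\ZZ\to\pi_\ast\ZZ\to\bigoplus_{v\in Z} i_{v,\ast}\Bigl(\bigl(\bigoplus_{\pi(w)=v}\mathrm{ind}^{G_w}_{G_v}\ZZ\bigr)/\ZZ\Bigr)\to 0
\]
and applying \emph{compact-support} cohomology, reducing to the known computations on the regular scheme $Y$ (Proposition~\ref{euler_vs_zeta}); this yields $H^0_c(X,\ZZ)=0$ and $H^1_c(X,\ZZ)$ torsion-free directly. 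Your long exact sequence approach can be salvaged by observing that $H^1(X,\ZZ)$, though nonzero, is torsion-free (it is a subgroup of invariants in a free $\ZZ$-module), so $H^1_c(X,\ZZ)$ is still an extension of torsion-free groups; but as written the step fails.
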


\begin{proof}
	We apply cocorollary \ref{special_value_zeta_singular} to $F=\ZZ$. Using $R^q g_\ast \GG_m=0$ for $q>0$ (see \cite[II.1.4]{ADT}), hypercohomology computations give
	\begin{align*}
	H^0(X,\mathbb{G}_X)&=\mathrm{ker}(K^\times \xrightarrow{\sum \mathrm{mult}_v} \bigoplus_{v\in X_{0}} \ZZ)=\mathrm{CH}_0(X,1)\\
	H^1(X,\mathbb{G}_X)&=\mathrm{coker}(K^\times \xrightarrow{\sum \mathrm{mult}_v} \bigoplus_{v\in X_{0}} \ZZ)=\mathrm{CH}_0(X)
	\end{align*}
	Moreover it is clear from the above that $\mathrm{CH}_0(X,1)_{tor}=\mu(K)$. 
	
	From the short exact sequence
	\[
	0 \to \ZZ \to \pi_\ast \ZZ \to \oplus_{v\in Z} i_{v,\ast}\left(\oplus_{\pi(w)=v} \mathrm{ind}^{G_w}_{G_v}\ZZ\right)/\ZZ \to 0,
	\]
	with $\pi : Y\to X$ the normalization, $G_v=\mathrm{Gal}(\kappa(v)^{sep}/\kappa(v))$ and $G_w$ the Galois groups of the residue fields, we find by taking cohomology with compact support that $H^0_c(X,\ZZ)=0$ and $H^1_c(X,\ZZ)$ is torsion-free\footnote{see the computations in the proof of proposition \ref{euler_vs_zeta}}. The regulator $R(\ZZ)$ contains no contribution from the perfect pairing of trivial vector spaces
	\[
	H^0_c(X,\ZZ)_\RR \times CH_0(X)_\RR \to \RR
	\]
	hence we have $R(\ZZ)=R_X$.
\end{proof}

\begin{rmk}
	
	\begin{itemize}
		\item[]
		\item We can compute $\mathrm{rank}_\ZZ \mathrm{CH}_0(X,1)=s+t-1$ where $s=[S]$ and $t=\sum_{v \in Z} ([\pi^{-1}(v)]-1)$ by using the localization sequences associated to the open-closed decompositions $U\hookrightarrow X \hookleftarrow Z$ and $U \hookrightarrow Y \hookleftarrow \pi^{-1} Z$.
		\item Note that the proof above shows that $\mathrm{CH}_0(X)=\Ext^1_X(\ZZ,\GG_X)$ is torsion, and Artin-Verdier duality shows that it is finite type, hence it is finite.
	\end{itemize}
\end{rmk}

\begin{cor}\label{special_value_zeta_proper}
	Suppose $X$ is a proper curve, and denote $\FF_q$ the field of constants of $K$. We have the order and special value formula at $s=0$ for the arithmetic zeta function of $X$:
	\begin{align*}
	\mathrm{ord}_{s=0}\zeta_X  &= \mathrm{rank}_\ZZ(\mathrm{CH}_0(X,1))-1\\
	\zeta_X^\ast(0) &= - \frac{[\mathrm{CH}_0(X)_{tor}]R_X}{\omega \log(q)}
	\end{align*}
	where $R_X$ is the absolute value of the determinant of the regulator pairing
	\[
	H^1(X,\ZZ)_\RR \times \mathrm{CH}_0(X,1)_\RR \to \RR
	\]
	after a choice of bases modulo torsion and $\omega$ is the number of roots of unity in $K$\footnote{Thus $\omega=q-1$}.
\end{cor}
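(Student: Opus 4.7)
The plan is to apply Corollary \ref{special_value_zeta_singular} to $F = \ZZ$; since $\ZZ(K) = \ZZ$ has rank $1$, the sign $(-1)^{\mathrm{rank}_\ZZ \ZZ(K)} = -1$ will produce the expected minus sign. As $X$ is proper, compactly supported cohomology coincides with ordinary cohomology, and using the Deninger complex $\GG_X$ together with the hypercohomology computation from the proof of Corollary \ref{special_value_zeta_order} one identifies
\[
\Hom_X(\ZZ, \GG_X) = \mathrm{CH}_0(X,1), \qquad \Ext^1_X(\ZZ, \GG_X) = \mathrm{CH}_0(X).
\]
Since $\mathrm{CH}_0(X,1) \subset K^\times$ by definition, its torsion is $\mu(K)$ of cardinality $\omega$.

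I next compute the relevant ranks and torsions. Let $\pi \colon Y \to X$ denote the normalization; $Y$ is a smooth proper connected curve, so $H^1(Y,\ZZ) = 0$ and $H^0(Y,\ZZ) = \ZZ$. The short exact sequence $0 \to \ZZ \to \pi_\ast \ZZ \to Q \to 0$, in which $Q$ is a $\ZZ$-torsion-free sheaf supported on the singular locus (with stalks equal to quotients of permutation modules by the diagonal $\ZZ$), yields $H^0(X,\ZZ) = \ZZ$ and $H^1(X,\ZZ) \cong H^0(X,Q)$, which is torsion-free. Thus $H^0_c(X,\ZZ)_{tor} = H^1_c(X,\ZZ)_{tor} = 0$. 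Surjectivity of the degree on $\mathrm{Pic}(Y)$ (because $\FF_q$ is the field of constants) descends through the normalization to a surjective degree map $\mathrm{CH}_0(X) \to \ZZ$ with finite kernel, so $\mathrm{CH}_0(X)$ has rank $1$ and finite torsion.

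The order formula then follows immediately: by perfectness of the regulator pairing after $\otimes \RR$ (Theorem \ref{regulator_perfect}) one has $\mathrm{rank}_\ZZ H^1(X,\ZZ) = \mathrm{rank}_\ZZ \mathrm{CH}_0(X,1)$, whence
\[
\mathrm{ord}_{s=0}\zeta_X = E_T(f_! \ZZ) = \mathrm{rank}_\ZZ H^1_c(X,\ZZ) - \mathrm{rank}_\ZZ H^0_c(X,\ZZ) = \mathrm{rank}_\ZZ \mathrm{CH}_0(X,1) - 1.
\]
For the special value, plugging the above torsion computations into the explicit formula of Corollary \ref{special_value_zeta_singular} (equivalently Proposition \ref{euler_vs_arbitrary} transported through the adjunction identity $Rf^!\GG_m \simeq \GG_X$) reduces the expression to
\[
\chi_T(f_! \ZZ) = \frac{[\mathrm{CH}_0(X)_{tor}]\,R(\ZZ)}{\omega},
\]
and it remains to compute $R(\ZZ) = R_1(\ZZ)/R_0(\ZZ)$, with $R_1(\ZZ) = R_X$ by the definition given in the statement.

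The hard part is to establish $R_0(\ZZ) = \log q$. Arguing as in the proof of Proposition \ref{euler_vs_zeta_proper_curve}, I expect the $H^0$-component of the regulator pairing to identify, in bases modulo torsion, with the degree pairing $\ZZ \times \mathrm{CH}_0(X) \to \RR$; this factors through the degree followed by the trace $tr$ on the henselian idele class group, which by equation \eqref{tr_sur_Z} sends the class of a closed point $v$ to $-\log N(v) = -\deg(v)\log q$. Taking absolute value of the determinant gives $R_0(\ZZ) = \log q$, so $R(\ZZ) = R_X/\log q$, and one concludes
\[
\zeta_X^\ast(0) = -\chi_T(f_! \ZZ) = -\frac{[\mathrm{CH}_0(X)_{tor}]\,R_X}{\omega \log q}.
\]
The main technical subtlety will be verifying the compatibility between the degree on $\mathrm{CH}_0(X)$ and the trace $tr$ in the possibly singular setting, which should follow by factoring through the normalization $\pi$ as above.
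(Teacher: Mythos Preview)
Your proposal is correct and follows essentially the same route as the paper: apply Corollary~\ref{special_value_zeta_singular} with $F=\ZZ$, identify $H^i(X,\GG_X)$ with $\mathrm{CH}_0(X,1)$ and $\mathrm{CH}_0(X)$, show that $H^0(X,\ZZ)=\ZZ$ and $H^1(X,\ZZ)$ is torsion-free via the comparison with the normalization, and then compute $R_0(\ZZ)=\log q$ by relating the $H^0$-pairing to the degree map composed with $tr$.

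The one place where the paper is more explicit is the finiteness of $\ker(\deg_X)=\mathrm{CH}_0(X)_{tor}$, which you assert but do not justify. The paper proves it by applying the snake lemma to the pushforward $\pi_\ast:\mathrm{Pic}(Y)\to\mathrm{CH}_0(X)$, identifying the cokernel as $\bigoplus_{v\in Z}\ZZ/m_v\ZZ$, and then invoking the finiteness of $\mathrm{Pic}^0(Y)$. You could equally well close this gap by noting that $\mathrm{CH}_0(X)=\Ext^1_X(\ZZ,\GG_X)$ is finitely generated (Artin--Verdier duality, which the paper observes still holds for singular $X$) and has rank equal to $\mathrm{rank}_\ZZ H^0_c(X,\ZZ)=1$ by the perfectness of the regulator pairing; hence the kernel of the degree map is finitely generated torsion, so finite. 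Either way the argument goes through. Your choice to work with the degree over $\FF_q$ (surjective) rather than over $\FF_p$ (image $f\ZZ$) is cosmetically different from the paper but gives the same $\log q$.
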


\begin{proof}
As in the previous proof we have $H^0(X,\mathbb{G}_X)=\mathrm{CH}_0(X,1)$, $\mathrm{CH}_0(X,1)_{tor}=\mu(K)$ and $H^1(X,\mathbb{G}_X)=\mathrm{CH}_0(X)$, but this time we have $H^0(X,\ZZ)=\ZZ$. We now want to identify the pairing $H^0(X,\ZZ)_\RR \times \mathrm{CH}_0(X)_\RR \to \RR$ coming from the regulator. For a closed point $w \in Y$ above $v\in X$, denote $f_w:=[\kappa_w:\kappa_v]$ the residual degree. Considering $Y$ as a curve over $\FF_p$, the degree map $\deg_Y:\mathrm{Pic}(Y) \to \ZZ$ has image $f\ZZ$, where $p^f=q$. This easily implies that there is a similarly-defined degree map $\deg_X:\mathrm{CH}_0(X) \to \ZZ$ with image $f\ZZ$.
The snake lemma applied to
\[
\begin{tikzcd}
0 \rar & K^\times/\mathcal{O}_Y(Y)^\times \rar \dar & \bigoplus_{w \in Y_{0}} \ZZ  \rar \dar & \mathrm{Pic}(Y)  \rar \dar & 0\\
0 \rar & K^\times/\mathrm{CH}_0(X,1) \rar & \bigoplus_{v \in X_{0}} \ZZ \rar & \mathrm{CH}_0(X) \rar & 0
\end{tikzcd}
\]
shows that $\mathrm{coker}(\mathrm{Pic}(Y) \to \mathrm{CH}_0(X))=\bigoplus_{v \in Z} \ZZ/m_v\ZZ$, where $m_v=\mathrm{pgcd}\{f_w,~ \pi(w)=v\}$. Since $\mathrm{coker}(\mathrm{Pic}(Y) \to \mathrm{CH}_0(X))=\mathrm{coker}(\mathrm{Pic}^0(Y)\to \mathrm{ker}(\mathrm{deg}_X))$, it follows from the finiteness of $\mathrm{Pic}^0(Y)$ that $\mathrm{ker}(\mathrm{deg}_X)$ is finite and hence equals $\mathrm{CH}_0(X)_{tor}$. It is clear from the constructions that the pairing $H^0(X,\ZZ)_\RR \times \mathrm{CH}_0(X)_\RR \to \RR$ comes from the pairing $H^0(X,\ZZ)\times \mathrm{CH}_0(X) \to \mathrm{CH}_0(X) \xrightarrow{deg} f\ZZ$, hence similarly to the proof of \ref{euler_vs_zeta_proper_curve} we find that its determinant is $f\log(p)=\log(q)$ ; it follows that $R(\ZZ)=R_X/\log(q)$.
\end{proof}

\begin{rmk}
\begin{itemize}
\item[]
\item We can compute $\mathrm{rank}_\ZZ \mathrm{CH}_0(X,1)=t$ where $t=\sum_{v \in Z} ([\pi^{-1}(v)]-1)$ by using the localization sequences associated to the open-closed decompositions $U\hookrightarrow X \hookleftarrow Z$ and $U \hookrightarrow Y \hookleftarrow \pi^{-1} Z$. Thus if $X$ is unibranch we have $R_X=1$.
\item We showed in the proof above that $\mathrm{CH}_0(X)_{tor}$ is the group of classes of $0$-cycles of degree $0$.
\end{itemize}
\end{rmk}

\begin{rmk}
In \cite{Poonen20}, Jordan and Poonen prove an analytic class number formula for the zeta function of a reduced affine finite type $\ZZ$-scheme of pure dimension $1$. They note that even for non-maximal orders in number fields, the formula seems new. Let us compare their results with ours. Define the completed zeta function
\[
\widehat{\zeta}_K(s)
=|d_K|^{s/2}\zeta_Y(s)\prod_{v\in S}L_v(\ZZ,s)
\] where $Y$ is as before the normalization of $X$ in $K$, $d_K$ is the discriminant of $K$, $L_v(\ZZ,s)$ is the usual local factor if $v$ is nonarchimedean and
\begin{align*}
L_v(\ZZ,s)&=2(2\pi)^{s}\Gamma(s), ~v~\text{complex}\\
L_v(\ZZ,s)&=\pi^{\frac {-s} 2}\Gamma(\frac s 2), ~v~\text{real}
\end{align*}
By comparing $\zeta_X$ and $\zeta_Y$, we get the equality
\[
\zeta_X(s)
=\widehat{\zeta}_K(s)|d_K|^{-s/2}\prod_{w\in S}L_w(\ZZ,s)^{-1}\prod_{v\in Z} \left(\frac{L_v(\ZZ,s)}{\prod_{\pi(w)=v}L_w(\ZZ,s)}\right)
\]
Using the functional equation $\widehat{\zeta}_K(s)=\widehat{\zeta}_K(1-s)$, we find the order and special value at $s=0$ :
\begin{align*}
 &\mathrm{ord}_{s=0}\zeta_X  = s+t-1\\
 &\zeta_X^\ast(0) = -\frac{h(\mathcal{O})R(\mathcal{O}))}{\omega(\mathcal{O})[\widetilde{\mathcal{O}}:\mathcal{O}]}\prod_{v\in Z} \frac{\gamma_v}{\prod_{\pi(w)=v}\gamma_w}
\end{align*}
where $Y=\Spec(\widetilde{\mathcal{O}})$, $\gamma_v=\frac{1-N(v)^{-1}}{\log(N(v))}$, $h(\mathcal{O})=[\mathrm{Pic}(X)]$, $R(\mathcal{O})$ is the covolume of the image in $\RR^s$ of $\mathcal{O}^\times$ \textit{via} the classical regulator of $\widetilde{\mathcal{O}}$, and $\omega(\mathcal{O})=\mathcal{O}^{\times}_{tor}$. Our formula at $s=0$ seems to be better in several aspects: it does not suppose $X$ to be reduced nor affine and it is defined in terms of invariants intrisic to $X$.
\end{rmk}
\appendix
\section{Some determinant computations}

Let $M$ be a perfect complex of abelian group or $\RR$-vector spaces. We will denote $H^{ev}M:=\oplus_{i\equiv 0[2]} H^iM$ and $H^{od}M:=\oplus_{i\equiv 1[2]} H^iM$. We fix determinant functors $\det_\ZZ$ and $\det_\RR$ on the derived categories of perfect complexes of abelian groups/$\RR$-vector spaces extending the usual one on finite type projective modules (see section \ref{Construction}).

\begin{lem}
	Let M be a perfect complex of abelian groups with finite cohomology groups. Then
	\[
	\mathrm{det}_\ZZ M \subset (\det_\ZZ M)_\RR \simeq \det_\RR M_\RR \simeq \det_\RR 0 = \RR
	\]
	corresponds to the lattice 
	\[
	\frac 1 {\prod_i [H^i(M)]^{(-1)^i}} \ZZ\subset \RR 
	\]
\end{lem}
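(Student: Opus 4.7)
The plan is to proceed by dévissage, first reducing to the case where $M$ is concentrated in a single degree and is a finite abelian group, and then performing an explicit determinant computation via a two-term free resolution.

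First I would use the truncation distinguished triangles
\[
\tau^{<i} M \to \tau^{\leq i} M \to H^i(M)[-i]
\]
together with the multiplicativity of the determinant functor in distinguished triangles. Both sides of the claimed identity are multiplicative with respect to such triangles: for $\det_\ZZ$ this follows from the axioms of a determinant functor, and on the $\RR$-side the trivialization of the determinant of an acyclic complex (canonical, since $\det_\RR 0 = \RR$) is multiplicative in distinguished triangles of acyclic complexes. Combined with the identity $\det_\ZZ(N[1]) = (\det_\ZZ N)^{-1}$, which sends a lattice $\ell\ZZ$ to $\ell^{-1}\ZZ$ in $\RR$, this reduces the problem to showing that for a finite abelian group $A$ placed in degree $0$, the lattice $\det_\ZZ A \subset \RR$ equals $\frac{1}{[A]}\ZZ$.

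For this, I would pick a free resolution $0 \to L_1 \xrightarrow{\phi} L_0 \to A \to 0$ with $L_0, L_1$ finite free $\ZZ$-modules of common rank $r$ (such a resolution exists because $A$ is finite). The complex $[L_1 \xrightarrow{\phi} L_0]$ in degrees $-1, 0$ is a perfect representative of $A$, so
\[
    \det_\ZZ A \simeq \det_\ZZ L_0 \otimes (\det_\ZZ L_1)^{-1}.
\]
After base change to $\RR$ the map $\phi_\RR$ is an isomorphism (as $A_\RR = 0$), and the trivialization of the determinant of the acyclic two-term complex is encoded by $\det\phi_\RR$; a standard computation then shows that the integral generator $(e_1\wedge\cdots\wedge e_r)\otimes(f_1\wedge\cdots\wedge f_r)^\ast$ of $\det_\ZZ A$ maps to $\pm(\det \phi_\RR)^{-1}$ in $\RR$. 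By elementary divisors, $|\det \phi_\RR| = [L_0:\phi(L_1)] = [A]$, giving the claim.

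The main subtlety is to check that the multiplicativity of $\det$ in distinguished triangles is indeed compatible with the trivialization coming from acyclicity, i.e., that the lattices multiply correctly in $\RR$. This is a diagram chase analogous to (\ref{multiplicativity_euler_characteristic}), but the situation here is considerably simpler, since every complex involved becomes zero after tensoring with $\RR$, making all relevant trivializations canonical and identifying the tensor product of copies of $\RR$ with $\RR$ via multiplication.
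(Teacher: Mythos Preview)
Your proof is correct and follows essentially the same strategy as the paper: reduce to a finite abelian group in degree zero, then compute explicitly via a two-term free resolution. The only differences are cosmetic. The paper reduces to degree zero using the identification $\det_\ZZ M \simeq \det_\ZZ H^{ev}M \otimes (\det_\ZZ H^{od}M)^{-1}$ built into its definition of $f_\ZZ = H^\ast(g_\ZZ)$, rather than via truncation triangles as you do; and it further decomposes $A$ as a direct sum of cyclic groups so that the base case is $\ZZ/m\ZZ$ with the rank-one resolution $0\to\ZZ\xrightarrow{m}\ZZ\to\ZZ/m\ZZ\to 0$, whereas you keep a general rank-$r$ resolution and invoke elementary divisors. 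Both routes lead to the same explicit computation, and the compatibility check you flag is handled in the paper by the base-change compatibility of the chosen determinant extensions.
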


\begin{proof}
	We have an isomorphism of determinants\footnote{\cite[Section 5, in particular prop 5.5 and its proof]{Breuning11} and \cite[prop 3.4 and its proof]{Breuning08}} $\det_\ZZ M \simeq \det_\ZZ H^\ast M\simeq \det_\ZZ H^{ev}M \otimes (\det_\ZZ H^{od}M)^{-1}$ compatible with base change, so we may assume $M$ to be concentrated in degree $0$. Since $\det_\ZZ(A\oplus B)\simeq \det_\ZZ(A)\otimes\det_\ZZ(B)$, we further reduce to $M=\ZZ/m\ZZ$. Consider the short exact sequence 
	\[0 \to \ZZ_0 \xrightarrow{m} \ZZ_1 \to M \to 0
	\] where $\ZZ_0=\ZZ_1=\ZZ$. It induces an isomorphism between the determinants $i: \ZZ_0 \otimes \det_\ZZ M \to \ZZ_1$ ; furthermore the induced morphism after base change is $i_\RR : \RR_0 \otimes_\RR \RR=\RR_0 \to \RR_1$, given by $x\mapsto mx$ since it is obtained from the short exact sequence of $\RR$-vector spaces $0 \to \RR_0 \xrightarrow{m} \RR_1 \to 0 \to 0$. Consider now the following commutative diagram
	\[\begin{tikzcd}
		\ZZ_0^{-1}\otimes \ZZ_1 \dar & \ZZ_0^{-1}\otimes \ZZ_0 \otimes \det_\ZZ M \arrow{l}{1\otimes i}[swap]{\simeq} \arrow{r}{ev\otimes 1}[swap]{\simeq}\dar & \det_\ZZ M \dar\\
		\RR_0^{-1}\otimes_\RR \RR_1 & \RR_0^{-1}\otimes_\RR \RR_0 \otimes_\RR \RR \arrow{l}{1\otimes i_\RR=1\otimes m}[swap]{\simeq} \arrow{r}{ev\otimes 1}[swap]{\simeq} & \RR
	\end{tikzcd}\]
	where $ev(\varphi\otimes x)=\varphi(x)$. The integral basis $1^\ast\otimes 1$ of $\ZZ_0^{-1}\otimes \ZZ_1$ is sent under the above maps to $\frac{1}{m}\in \RR$.
\end{proof}

\begin{lem}
	Let $A$ and $B$ be free abelian groups of rank $d$ and $\phi :A_\RR \to B_\RR$ be an isomorphism. Then
	\[
	\det_\ZZ A \otimes (\det_\ZZ B)^{-1}\subset (\det_\ZZ A)_\RR \otimes_\RR ((\det_\ZZ B)^{-1})_\RR \simeq \left(\det_\RR A_\RR \right)\otimes_\RR \left(\det_\RR B_\RR\right)^{-1} \xrightarrow[1\otimes \det_\RR(\phi)^{t}]{\simeq} \left(\det_\RR A_\RR\right)\otimes \left(\det_\RR A_\RR\right)^{-1} \simeq \RR
	\]
	corresponds to the lattice $\det(\phi)\ZZ\subset \RR$ where $\det(\phi)$ is computed with respect to any choice of integral bases of $A$ and $B$.
\end{lem}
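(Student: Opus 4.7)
The plan is to pick integral bases and track a distinguished generator through the chain of isomorphisms, then observe that a change of bases multiplies everything by $\pm 1$.

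First, choose an integral basis $(a_1,\dots,a_d)$ of $A$ and $(b_1,\dots,b_d)$ of $B$, and set $\alpha:=a_1\wedge\cdots\wedge a_d$, $\beta:=b_1\wedge\cdots\wedge b_d$. Then $\alpha$ generates $\det_\ZZ A$ and $\beta^\ast$ (the dual functional in $(\det_\ZZ B)^{-1}$) generates $(\det_\ZZ B)^{-1}$, so $\alpha\otimes\beta^\ast$ generates the lattice $\det_\ZZ A\otimes(\det_\ZZ B)^{-1}$.

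Next, let $M=(m_{ij})\in GL_d(\RR)$ be the matrix of $\phi$ in these bases, i.e. $\phi(a_j)=\sum_i m_{ij}b_i$. Then $\det_\RR(\phi)(\alpha)=\det(M)\cdot\beta$, and the transpose map $(\det_\RR\phi)^t:(\det_\RR B_\RR)^{-1}\to(\det_\RR A_\RR)^{-1}$ satisfies
\[
(\det_\RR\phi)^t(\beta^\ast)(\alpha)=\beta^\ast(\det_\RR\phi(\alpha))=\det(M),
\]
so $(\det_\RR\phi)^t(\beta^\ast)=\det(M)\cdot\alpha^\ast$. Chasing $\alpha\otimes\beta^\ast$ through the composite, it is first identified with itself in $(\det_\RR A_\RR)\otimes(\det_\RR B_\RR)^{-1}$ via base change, then sent to $\alpha\otimes\det(M)\alpha^\ast=\det(M)(\alpha\otimes\alpha^\ast)$ by $1\otimes(\det_\RR\phi)^t$, and finally to $\det(M)\in\RR$ under the evaluation $\det_\RR A_\RR\otimes(\det_\RR A_\RR)^{-1}\simeq\RR$. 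Thus the lattice $\det_\ZZ A\otimes(\det_\ZZ B)^{-1}\subset\RR$ is $\det(M)\ZZ=\det(\phi)\ZZ$.

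Finally, independence of the choice of integral bases: a change of bases is effected by matrices $P,Q\in GL_d(\ZZ)$, whose determinants lie in $\{\pm 1\}$. Under this change, the generator $\alpha\otimes\beta^\ast$ is multiplied by $\det(P)\det(Q)^{-1}\in\{\pm1\}$, and the matrix $M$ becomes $Q^{-1}MP$, whose determinant differs from $\det(M)$ by the same sign; the two signs cancel, and in any case the lattice $\det(\phi)\ZZ$ is insensitive to the sign of its generator. There is no real obstacle here: the lemma is a direct unwinding of the determinant formalism, and the only point requiring care is writing down the transpose $(\det_\RR\phi)^t$ correctly on the dual generator.
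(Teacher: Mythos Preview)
Your proof is correct and follows essentially the same approach as the paper's: fix integral bases, track the generator $\alpha\otimes\beta^\ast$ through the chain of isomorphisms, and identify where it lands in $\RR$. You add explicit verification that $(\det_\RR\phi)^t(\beta^\ast)=\det(M)\alpha^\ast$ and a remark on independence of bases, which the paper leaves implicit, but the argument is the same.
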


\begin{proof}
	Fix bases $(e_1,\ldots,e_d)$ and $(f_1,\ldots, f_d)$ of $A$ and $B$ respectively. Then by definition $\det_\RR(\phi)$ maps $e_1\wedge \cdots \wedge e_d$ to $\det(\phi)f_1\wedge\cdots \wedge f_d$, where $\det(\phi)$ is computed in the mentioned bases. Now the lemma follows from the fact that $\left(\det_\RR A_\RR \right)\otimes_\RR \left(\det_\RR B_\RR\right)^{-1} \to \left(\det_\RR A_\RR\right)\otimes \left(\det_\RR A_\RR\right)^{-1}$ maps the canonical basis element $e_1\wedge \cdots \wedge e_d \otimes (f_1\wedge \cdots \wedge f_d)^\ast$ to $e_1\wedge \cdots \wedge e_d \otimes \det(\phi) (e_1\wedge \cdots \wedge e_d)^\ast$.
\end{proof}

\begin{prop}
	Let $M$ be a perfect complex of abelian groups and let $\phi : H^{ev}(M_\RR) \xrightarrow{\simeq} H^{od}(M_\RR)$ be a "trivialisation" of $M_\RR$. Then
	\[
	\det_\ZZ M \subset (\det_\ZZ M)_\RR \simeq \det_\RR M_\RR \simeq \left(\det_\RR H^{ev}M_\RR \right)\otimes_\RR \left(\det_\RR H^{od} M_\RR\right)^{-1} \xrightarrow[1\otimes_\RR \det_\RR(\phi)^t]{\simeq} \left(\det_\RR H^{ev}M_\RR\right) \otimes_\RR \left(\det_\RR H^{ev} M_\RR\right)^{-1} \simeq \RR
	\]
	corresponds to the lattice 
	\[
	\frac{\det(\phi)}{\prod_i [H^i(M)_{tor}]^{(-1)^i}}\ZZ \subset \RR
	\] where $\det(\phi)$ is computed with respect to any integral bases of $H^{ev}(M)/tor$ and $H^{od}(M)/tor$.
\end{prop}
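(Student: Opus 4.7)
The plan is to reduce this proposition to the two preceding lemmas by decomposing each cohomology group into its torsion and free parts, so that the torsion parts are controlled by the first lemma and the free parts, which alone survive base change to $\RR$, are controlled by the second lemma via $\phi$.

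First, as in the proof of the first lemma, I would use the standard filtration by truncations together with the compatibility of determinants with distinguished triangles to obtain a canonical isomorphism
\[
\det_\ZZ M \;\simeq\; \bigotimes_i \bigl(\det_\ZZ H^i(M)\bigr)^{(-1)^i},
\]
compatible with base change and with the analogous identification for $\det_\RR M_\RR$. Next, for each $i$, I would split the short exact sequence $0 \to H^i(M)_{tor} \to H^i(M) \to H^i(M)/\mathrm{tor}\to 0$ to write
\[
\det_\ZZ H^i(M) \;\simeq\; \det_\ZZ H^i(M)_{tor} \,\otimes\, \det_\ZZ H^i(M)/\mathrm{tor}.
\]
Upon base change to $\RR$, the torsion factor becomes canonically $\RR$ (via $\det_\RR 0 \simeq \RR$), and the free factor becomes $\det_\RR H^i(M_\RR)$. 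Thus the embedding $\det_\ZZ M \hookrightarrow (\det_\ZZ M)_\RR$ factors, up to the canonical identifications, as a tensor product of a purely torsion contribution with a purely free contribution.

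For the torsion contribution, I would apply the first lemma to the perfect complex with terms $H^i(M)_{tor}$ placed in degree $i$; this gives the lattice
\[
\frac{1}{\prod_i [H^i(M)_{tor}]^{(-1)^i}}\,\ZZ \;\subset\; \RR.
\]
For the free contribution, since $\phi$ only sees the free parts (torsion dies in $M_\RR$), and since $\phi$ is an $\RR$-isomorphism it forces $H^{ev}(M)/\mathrm{tor}$ and $H^{od}(M)/\mathrm{tor}$ to have the same rank $d$. I would then apply the second lemma with $A=H^{ev}(M)/\mathrm{tor}$, $B=H^{od}(M)/\mathrm{tor}$ and the map $\phi$, obtaining the lattice $\det(\phi)\,\ZZ \subset \RR$ independently of the chosen integral bases. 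Multiplying the two lattice contributions (which is compatible with the tensor product of graded lines through the standard evaluation maps) yields the claimed lattice.

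The main obstacle, and the only part requiring care rather than routine bookkeeping, is verifying that the identifications at the level of graded lines really do split as a tensor product of a ``torsion part'' and a ``free part'' compatibly with the respective base changes to $\RR$; equivalently, that the canonical trivialization of $(\det_\ZZ T)_\RR \simeq \RR$ for $T$ finite (used implicitly in the first lemma) is compatible with the identification $\det_\RR M_\RR \simeq \det_\RR (H^{ev}(M)/\mathrm{tor})_\RR \otimes \det_\RR (H^{od}(M)/\mathrm{tor})_\RR^{-1}$. Once this compatibility is unwound from the determinant axioms, the formula follows immediately by combining the two lemmas.
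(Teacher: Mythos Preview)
Your proposal is correct and follows essentially the same approach as the paper: reduce to cohomology via $\det_\ZZ M \simeq \det_\ZZ H^{ev}M \otimes (\det_\ZZ H^{od}M)^{-1}$ compatibly with base change, then use the short exact sequence $0 \to A_{tor} \to A \to A/\mathrm{tor} \to 0$ for $A = H^{ev}(M),\ H^{od}(M)$ to split into the torsion case (first lemma) and the free case (second lemma). The paper's proof is more terse but the strategy is identical.
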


\begin{proof}
	We have an isomorphism of determinants $\det_\ZZ M \simeq \det_\ZZ H^\ast M\simeq \det_\ZZ H^{ev}M \otimes (\det_\ZZ H^{od}M)^{-1}$ compatible with base change. Using the short exact sequence $0 \to A_{tor} \to A \to A/tor \to 0$ for $A=H^{ev}(M),~H^{od}(M)$, we reduce by functoriality of the determinant to the torsion case and to the free case, which are the two previous lemmas.
\end{proof}

\begin{lem}\label{formula_determinant_acyclic_complexes}
Let $A^\bullet$, $B^\bullet$ be two bounded acyclic complexes of finite type abelian groups, and suppose there is an isomorphism $\phi : B_\RR \xrightarrow{\simeq} \Hom(A_\RR,\RR)[-1]$. Then
\[
\frac{\prod_i [B^i_{tor}]^{(-1)^i}}{\det(\phi)\prod_i[A^i_{tor}]^{(-1)^i}}=1
\]
where $\det(\phi)$ is defined as the alternated product $\prod_i \det(\phi^i : B_\RR^i \to \Hom(A_\RR^{1-i},\RR))^{(-1)^i}$ with determinants computed in bases modulo torsion.
\end{lem}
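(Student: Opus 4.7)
The plan is to reduce the identity to a comparison of two canonical trivializations of the real determinant line $\det_\RR(A^\bullet_\RR)$ (and analogously for $B^\bullet$), and then to relate the two sides via the determinant of $\phi$.

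The first key step is the following observation. For any bounded complex $M^\bullet$ of finite type abelian groups, iterated use of the stupid-truncation fiber sequences $M^n[-n] \to \sigma_{\geq n} M^\bullet \to \sigma_{\geq n+1} M^\bullet$ together with the axioms of the determinant functor yields a canonical isomorphism $\det_\ZZ(M^\bullet) \simeq \bigotimes_i \det_\ZZ(M^i)^{(-1)^i}$. After fixing integral bases of each $A^i/\mathrm{tor}$, the first lemma of the appendix applied to the torsion parts $A^i_{\mathrm{tor}}$ gives that $\det_\ZZ(A^i) \subset \det_\RR(A^i_\RR)$ corresponds to $\tfrac{1}{[A^i_{\mathrm{tor}}]}\ZZ$ in the induced ``basis trivialization''; using that the dual of $\alpha\,\ZZ \subset L$ is $\alpha^{-1}\ZZ \subset L^{-1}$, the lattice $\bigotimes_i \det_\ZZ(A^i)^{(-1)^i}$ corresponds to $\left(\prod_i [A^i_{\mathrm{tor}}]^{(-1)^i}\right)^{-1} \ZZ$ in the basis trivialization $T_b^A$ of $\det_\RR(A^\bullet_\RR)$. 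On the other hand, since $A^\bullet$ is acyclic, $A^\bullet \simeq 0$ in the derived category and $\det_\ZZ(A^\bullet) \simeq \ZZ$ canonically, sitting as the unit lattice inside $\det_\RR(A^\bullet_\RR)$ via the acyclic trivialization $T_a^A \colon \det_\RR(A^\bullet_\RR) \xrightarrow{\simeq} \RR$. Comparing both descriptions of the same integral lattice, I conclude that $T_a^A$ and $T_b^A$ differ by the positive scalar $r_A := \prod_i [A^i_{\mathrm{tor}}]^{(-1)^i}$, and the same analysis for $B^\bullet$ gives $r_B := \prod_i [B^i_{\mathrm{tor}}]^{(-1)^i}$.

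The second step is to transport via $\phi$. The canonical isomorphisms $\det_\RR(V^\ast) \simeq \det_\RR(V)^{-1}$ and $\det_\RR(V[-1]) \simeq \det_\RR(V)^{-1}$ combine to give $\det_\RR(\Hom(A^\bullet_\RR,\RR)[-1]) \simeq \det_\RR(A^\bullet_\RR)$, so $\phi$ induces an isomorphism $\phi_\ast \colon \det_\RR(B^\bullet_\RR) \xrightarrow{\simeq} \det_\RR(A^\bullet_\RR)$. Expressed in the basis trivializations, $\phi_\ast$ is by definition multiplication by $\det(\phi) = \prod_i \det(\phi^i)^{(-1)^i}$. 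Expressed in the acyclic trivializations, $\phi_\ast$ is the identity: the acyclic trivialization is built by telescoping the isomorphisms $\det(X^i) \simeq \det(Z^i) \otimes \det(Z^{i+1})$ coming from the short exact sequences $0 \to Z^i \to X^i \to Z^{i+1} \to 0$ (with $Z^i = \ker d^i = \mathrm{im}\, d^{i-1}$), and any chain isomorphism of acyclic complexes respects these sequences term by term. Combining the two compatibilities yields $r_B = \det(\phi) \cdot r_A$, which is exactly the claimed identity.

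The main technical obstacle I anticipate is the careful bookkeeping of signs and canonical identifications: verifying that the stupid-truncation description of $\det_\ZZ$, the behavior under duality and shifts, and the two trivializations all compose coherently without introducing stray signs. The $(-1)^i$ exponents are fully pinned down by the determinant axioms, but they must be tracked attentively to match both sides of the identity; by contrast, once these conventions are fixed, the rest of the argument is a direct comparison of two well-defined scalars.
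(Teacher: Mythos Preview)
Your proof is correct and follows essentially the same strategy as the paper's: both compare the ``acyclic'' trivialization (coming from $A^\bullet \simeq 0$ and $B^\bullet \simeq 0$ in the derived category) with a ``basis/$\phi$'' trivialization, and both pin down the ratio by observing that the integral lattice $\det_\ZZ$ is the same under either. The paper packages everything into the single line $\Delta = \det_\ZZ A \otimes (\det_\ZZ B)^{-1}$ and compares its two trivializations directly, whereas you unbundle into separate scalars $r_A$, $r_B$ for $A$ and $B$ and then transport via $\det(\phi)$; the underlying content is identical.

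Two small remarks. First, your claim that $T_a^A$ and $T_b^A$ differ by the \emph{positive} scalar $r_A$ is slightly too strong: the lattice only determines the trivialization up to sign, so they differ by $\pm r_A$, and the final identity $r_B = \det(\phi)\cdot r_A$ holds up to sign (the paper's stated ``$=1$'' has the same ambiguity, harmless since the application takes absolute values). Second, your step (b)---that the canonical identification $\det_\RR(\Hom(A_\RR,\RR)[-1]) \simeq \det_\RR(A_\RR)$ intertwines the acyclic trivializations---is exactly what the paper's small commutative diagram (with the maps $\det(0)\otimes(\det(0)^t)^{-1}$) checks; this is the one place where the bookkeeping you flag must actually be carried out, but it goes through.
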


\begin{proof}
Consider the line $\Delta := \det_\ZZ A \otimes (\det_\ZZ B)^{-1}$. It has a naive trivialisation
\[\det_\ZZ A \otimes (\det_\ZZ B)^{-1} \xrightarrow{\det(0)\otimes (\det(0)^t)^{-1}} \ZZ \otimes \ZZ^{-1} \to \ZZ\]
This induces the naive trivialisation of $\Delta_\RR$, under which the embedding $\Delta \hookrightarrow \Delta_\RR$ corresponds to the natural embedding $\ZZ \hookrightarrow \RR$. Moreover, the isomorphism $\phi$ induces another trivialisation of $\Delta_\RR$:
\[
\det_\RR A_\RR \otimes (\det_\RR B_\RR)^{-1} \xrightarrow{\mathrm{id} \otimes (\det(\phi)^t)^{-1} } \det_\RR A_\RR \otimes (\det_\RR A_\RR)^{-1} \to \RR
\]
under which the embedding $\Delta \hookrightarrow \Delta_\RR$ corresponds by the previous lemmas to 
\[
\frac{\prod_i [B^i_{tor}]^{(-1)^i}}{\det(\phi)\prod_i[A^i_{tor}]^{(-1)^i}}\ZZ \hookrightarrow \RR
\]
Let us show that both trivialisations are compatible. We claim that the following diagram commutes
\[
\begin{tikzcd}[column sep=huge]
\det_\RR A_\RR \otimes (\det_\RR B_\RR)^{-1} \arrow[r, "\mathrm{id} \otimes (\det(\phi)^t)^{-1}"] \arrow[d, "\det(0)\otimes (\det(0)^t)^{-1}"'] & \det_\RR A_\RR \otimes (\det_\RR A_\RR)^{-1} \arrow[d, "nat"] \arrow[ld, "\det(0) \otimes (\det(0)^t)^{-1}" description] \\
\RR \otimes \RR^{-1} \arrow[r, "nat"']                                                                                                          & \RR                                                                                                                      
\end{tikzcd}
\]
The commutativity of the lower triangle is immediate, and the commutativity of the upper triangle comes from the commutative diagram
\[
\begin{tikzcd}
B_\RR \arrow{r}{\phi} \arrow{dr}{0} & \Hom(A_\RR,\RR)[-1] \arrow{d}{0}\\
& 0
\end{tikzcd}
\]
The compatibility of the trivialisations implies that $\alpha=1$, and the formula follows.
\end{proof}

\printbibliography

\end{document}